\journal{Journal of \LaTeX\ Templates}
\theoremstyle{plain}
\newtheorem{theorem}{Theorem}[section]
\newtheorem{corollary}[theorem]{Corollary}
\newtheorem{lemma}[theorem]{Lemma}
\newtheorem{proposition}[theorem]{Proposition}
\newtheorem{some new invariant}[theorem]{Some new invariant}
\theoremstyle{definition}
\newtheorem{remark}[theorem]{Remark}
\newtheorem{example}[theorem]{Example}
\theoremstyle{remark}
\newtheorem{hypothesis}[theorem]{Hypothesis}
\begin{document}

\begin{frontmatter}

\title{Asymptotic associate primes}

\author{Dipankar Ghosh}
\ead{dghosh@cmi.ac.in}

\address{Chennai Mathematical Institute, H1, SIPCOT IT Park, Siruseri, Kelambakkam, Chennai 603103, Tamil Nadu, India}

\author{Provanjan Mallick}
\ead{prov786@math.iitb.ac.in}

\author{Tony J. Puthenpurakal\corref{mycorrespondingauthor}}
\ead{tputhen@math.iitb.ac.in}

\address{Department of Mathematics, Indian Institute of Technology Bombay, Powai, Mumbai 400076, India}

\cortext[mycorrespondingauthor]{Corresponding author}

\begin{abstract}
We investigate three cases regarding asymptotic associate primes. First, assume $ (A,\mathfrak{m}) $ is an excellent Cohen-Macaulay (CM) non-regular local ring, and $ M = \operatorname{Syz}^A_1(L) $ for some maximal CM $ A $-module $ L $ which is free on the punctured spectrum. Let $ I $ be a normal ideal. In this case, we examine when $ \mathfrak{m} \notin \operatorname{Ass}(M/I^nM) $ for all $ n \gg 0 $. We give sufficient evidence to show that this occurs rarely. Next, assume that $ (A,\mathfrak{m}) $ is excellent Gorenstein non-regular isolated singularity, and $ M $ is a CM $ A $-module with $\operatorname{projdim}_A(M) = \infty $ and $ \dim(M) = \dim(A) -1 $.  Let $ I $ be a normal ideal with analytic spread $ l(I) < \dim(A) $. In this case, we investigate when $\mathfrak{m} \notin \textrm{Ass} \operatorname{Tor}^A_1(M, A/I^n)$ for all $n \gg 0$. We give sufficient evidence to show that this also occurs rarely. Finally, suppose $ A $ is a local complete intersection ring. For finitely generated $ A $-modules $ M $ and $ N $, we show that if $ \operatorname{Tor}_i^A(M, N) \neq 0 $ for some $ i > \dim(A) $, then there exists a non-empty finite subset $ \mathcal{A} $ of $ \operatorname{Spec}(A) $ such that for every $ \mathfrak{p} \in \mathcal{A} $, at least one of the following holds true: (i) $ \mathfrak{p} \in \operatorname{Ass}\left( \operatorname{Tor}_{2i}^A(M, N) \right) $ for all $ i \gg 0 $; (ii) $ \mathfrak{p} \in \operatorname{Ass}\left( \operatorname{Tor}_{2i+1}^A(M, N) \right) $ for all $ i \gg 0 $. We also analyze the asymptotic behaviour of $\operatorname{Tor}^A_i(M, A/I^n)$ for $i,n \gg 0$ in the case when $I$ is principal or $I$ has a principal reduction generated by a regular element.
\end{abstract}

\begin{keyword}
Asymptotic associate primes; Asymptotic grade; Associated graded rings and modules; Local cohomology; Tor; Complete intersections
\MSC[2010] Primary 13A17, 13A30, 13D07; Secondary 13A15, 13H10
\end{keyword}

\end{frontmatter}


\section{Introduction}

In this paper, we investigate three cases regarding asymptotic associate primes. We will introduce it one by one.

\textbf{I:}
Let $(A,\mathfrak{m})$ be a Noetherian local ring of dimension $d$, and let $M$ be a finitely generated $A$-module. By a result of Brodmann \cite{Mb0}, there exists $n_0$ such that the set of associate primes $\operatorname{Ass}_A(M/I^nM)=\operatorname{Ass}_A(M/I^{n_0}M)$ for all $n\geqslant n_0$.  We denote this eventual constant set by $\operatorname{Ass}^{\infty}_I(M)$. 

A natural question is when does $\mathfrak{m} \in \operatorname{Ass}^{\infty}_I(M)$,  or the opposite $\mathfrak{m} \notin \operatorname{Ass}^{\infty}_I(M) $? In general, this question is hopeless to resolve. So well make try to make a few assumptions which are quite general but still amenable to answer our question.

(I.1) We first assume that $(A,\mathfrak{m})$ is an excellent Cohen-Macaulay local ring with infinite residue field. We note that this assumption is quite general.

(I.2) We also assume that $M$ is maximal Cohen-Macaulay (MCM). In fact in the study of modules over Cohen-Macaulay rings the class of MCM modules is the most natural class to investigate. To keep things interesting we also assume $M$ is not free. In particular we are assuming $A$ is also not regular.

We note that in general the  answer to the question on when does $\mathfrak{m} \in \operatorname{Ass}^{\infty}_I(A)$ is \emph{not} known. However, by results of Ratliff, McAdam, a positive answer is known when $I$ is normal, i.e., $I^n$ is integrally closed for all $n \geqslant 1$. In this case, it is known that $\mathfrak{m} \in \operatorname{Ass}^{\infty}_I(A)$ if and only if $ l(I)$, the analytic spread of $I$ is equal to $d = \dim A$; see \cite[4.1]{Mcada06}. So our third assumption is

(I.3) $I$ is a normal ideal of height $ \geqslant 2$.

Before proceeding further, we want to remark that in analytically unramified local rings (i.e., its completion is reduced), there exist plenty of normal ideals. In fact, for any ideal $ I $, it is not terribly difficult to prove that for all $n \gg 0$, the ideal $ \overline{I^n} $ is normal (where $\overline{J}$ denotes the integral closure of an ideal $J$).

Finally, we note that as we are only interested in the question on whether $\mathfrak{m} \in \operatorname{Ass}^{\infty}_I(M)$ or not, it is convenient to assume

(I.4) $M$ is free on the punctured spectrum, i.e., $M_P$ is free for every prime ideal $P \neq \mathfrak{m} $.

We note that (I.4) is automatic if $A$ is an isolated singularity, i.e., $A_P$ is regular for every prime $P \neq \mathfrak{m} $. In general (even when $A$ is not an isolated singularity), any sufficiently high syzygy of a finite length module (of infinite 
projective dimension) will  be free
on the punctured spectrum. 

\begin{remark}
	As discussed above, our hypotheses are satisfied by a large class of rings, modules and ideals. 
\end{remark}

Before stating our results, we need to introduce a few notation. Let $G_I(A) = \bigoplus_{n \geqslant 0}I^n/I^{n+1}$ be the associated graded ring of $A$ with respect to $I$. Let $G_I(A)_+ = \bigoplus_{n \geqslant 1} I^n/I^{n+1}$ be its irrelevant ideal. If $M$ is an $A$-module, then $G_I(M) = \bigoplus_{n \geqslant 0} I^nM/I^{n+1}M$  is the associated graded module of $M$ with respect to $I$
(and considered as an $G_I(A)$-module).

Our first result is

\begin{theorem}\label{first}With assumptions as in {\rm I.1, I.2, I.3 and I.4}, suppose that $d \geqslant 3$, and that $M = \operatorname{Syz}^A_1(L)$ for some MCM $A$-module $L$. We have that
	\[
	\text{if} \ \mathfrak{m} \notin \operatorname{Ass}^{\infty}_I(M), \text{ then } \operatorname{grade}( G_{I^n}(A)_+, G_{I^n}(M)) \geqslant 2  \text{ for all } n \gg 0.
	\]
\end{theorem}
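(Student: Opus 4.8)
\emph{Proof proposal.} Since $\operatorname{grade}(\mathfrak{a},N)=\inf\{i:H^i_{\mathfrak{a}}(N)\neq 0\}$, the assertion is equivalent to: $H^0_{G_{I^n}(A)_+}(G_{I^n}(M))=0$ and $H^1_{G_{I^n}(A)_+}(G_{I^n}(M))=0$ for all $n\gg 0$. The vanishing of $H^0$ is the ``soft'' part and uses none of the special hypotheses (not even that $I$ is normal, nor $d\geq 3$, nor $M=\operatorname{Syz}^A_1(L)$). Indeed, $\operatorname{ht}(I)\geq 2$ and $M$ maximal Cohen--Macaulay force $I$ to contain an $M$-regular element, so by the module version of the Ratliff--Rush lemma there is a $c$ with $\widetilde{I^kM}=I^kM$ for all $k\geq c$; then for $n\geq c$ and every $k\geq 1$ the $I^n$-adic Ratliff--Rush closure of $I^{nk}M$ is squeezed between $I^{nk}M$ and the $I$-adic Ratliff--Rush closure $\widetilde{I^{nk}M}=I^{nk}M$, so the $I^n$-adic Ratliff--Rush filtration of $M$ is trivial in positive degrees, which is exactly $H^0_{G_{I^n}(A)_+}(G_{I^n}(M))=0$.

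So the whole content is the vanishing of $H^1$. First I would dispose of it away from $\mathfrak{m}$. Fix a prime $\mathfrak{p}\neq\mathfrak{m}$ containing $I$; then $\operatorname{ht}(IA_{\mathfrak{p}})\geq 2$, the ring $A_{\mathfrak{p}}$ is again excellent Cohen--Macaulay with (after a harmless faithfully flat local extension) infinite residue field, the ideal $IA_{\mathfrak{p}}$ is again normal, and $M_{\mathfrak{p}}$ is $A_{\mathfrak{p}}$-free by the freeness on the punctured spectrum. Using the Ratliff--McAdam theory of normal ideals (as in the result of McAdam cited in the introduction) one shows that for a normal ideal of positive height in an excellent Cohen--Macaulay local ring the grade of $G_{J^n}(\cdot)_+$ on $G_{J^n}(\cdot)$ is $\geq 2$ for $n\gg 0$; applied over $A_{\mathfrak{p}}$ and transported through the isomorphism $G_{I^n}(M)_{\mathfrak{p}}\cong G_{(IA_{\mathfrak{p}})^n}(A_{\mathfrak{p}})^{\,\mathrm{rk}}$, this yields $\big(H^1_{G_{I^n}(A)_+}(G_{I^n}(M))\big)_{\mathfrak{p}}=0$ for $n\gg 0$ (uniformly in $\mathfrak{p}$, using excellence). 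Since $H^1_{G_{I^n}(A)_+}(G_{I^n}(M))$ is $G_{I^n}(A)_+$-torsion and finitely generated in each graded degree over $A$, it is then supported over $A$ only at $\mathfrak{m}$, hence $\mathfrak{m}$-power torsion. Combining this with the $H^0$-vanishing and the spectral sequence $H^p_{\mathfrak{m}G_{I^n}(A)}\big(H^q_{G_{I^n}(A)_+}(-)\big)\Rightarrow H^{p+q}_{\mathfrak{M}}(-)$ (with $\mathfrak{M}$ the homogeneous maximal ideal of $G_{I^n}(A)$, and noting the $q=0$ row dies) identifies $H^1_{G_{I^n}(A)_+}(G_{I^n}(M))$ with $H^1_{\mathfrak{M}}(G_{I^n}(M))$.

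It remains to kill $H^1_{\mathfrak{M}}(G_{I^n}(M))$ for $n\gg 0$, and this is where the remaining hypotheses are genuinely used. Because $M$ is maximal Cohen--Macaulay and $d\geq 3$, we have $H^i_{\mathfrak{m}}(M)=0$ for $i\leq 2$, so from $0\to I^jM\to M\to M/I^jM\to 0$ one gets $H^1_{\mathfrak{m}}(I^jM)\cong H^0_{\mathfrak{m}}(M/I^jM)$ and $H^2_{\mathfrak{m}}(I^jM)\cong H^1_{\mathfrak{m}}(M/I^jM)$ for all $j$; the hypothesis $\mathfrak{m}\notin\operatorname{Ass}^{\infty}_I(M)$ then gives $H^1_{\mathfrak{m}}(I^jM)=0$ for $j\gg 0$. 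Feeding this into the long exact sequences of $0\to I^{n(k+1)}M\to I^{nk}M\to [G_{I^n}(M)]_k\to 0$ — equivalently into the Sancho de Salas sequence linking $H^{\bullet}_{\mathfrak{m}}(I^{nk}M)$, $H^{\bullet}_{\mathfrak{M}}(G_{I^n}(M))$ and the local cohomology of the Rees module of $M$ — and running an induction on $d$: for the inductive step one mods out by a sufficiently general $x\in I^n$ that is $M$-superficial with $x^{\ast}$ a nonzerodivisor on $G_{I^n}(M)$ \emph{and} $L$-regular, so that (via Valabrega--Valla) $G_{I^n}(M)/x^{\ast}G_{I^n}(M)\cong G_{(I\overline{A})^n}(\overline{M})$ and, crucially, $\overline{M}=M/xM=\operatorname{Syz}^{\overline{A}}_1(L/xL)$ with $L/xL$ maximal Cohen--Macaulay over $\overline{A}=A/(x)$ — this is precisely why the hypothesis $M=\operatorname{Syz}^A_1(L)$ is imposed, as it is what survives the reduction — bringing one down to a two-dimensional situation covered by the arguments of the first two paragraphs. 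This gives $H^1_{\mathfrak{M}}(G_{I^n}(M))=0$ for $n\gg 0$, and with the earlier steps we conclude $\operatorname{grade}(G_{I^n}(A)_+, G_{I^n}(M))\geq 2$ for $n\gg 0$.

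The step I expect to be the real obstacle is the last one: turning ``$H^1_{\mathfrak{m}}(I^jM)=0$ for $j\gg 0$'' into ``$H^1_{\mathfrak{M}}(G_{I^n}(M))=0$ for all $n\gg 0$'' \emph{uniformly in} $n$. The one-shot cohomological input is elementary, but the uniformity morally requires a finite-generation statement for the family $\{H^{\bullet}_{\mathfrak{m}}(M/I^jM)\}_j$ as a module over the Rees algebra (in the spirit of the earlier work on depth of higher associated graded modules, here adapted to the non-$\mathfrak{m}$-primary ideal $I$), and it is the syzygy hypothesis that makes the inductive reduction legitimate. A secondary delicate point is the normal-ideal input invoked away from $\mathfrak{m}$ in the second paragraph, which is the only place where the normality of $I$ is essential.
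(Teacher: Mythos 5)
Your proposal correctly isolates the high-level ingredients --- the equivalence of the conclusion with vanishing of $H^0$ and $H^1$, the role of normality, and the role of freeness on the punctured spectrum --- and your $H^0$ argument is fine. But the central $H^1$-vanishing argument in your third paragraph has genuine gaps, some of which you anticipate. Concretely: (a) the induction on $d$ by an $M$-superficial $x\in I^n$ with $x^*$ regular on $G_{I^n}(M)$ cannot be iterated with control of the threshold ``$n\gg 0$'': each specialization can shrink the set of admissible $n$, and the grade $\geq 1$ you have at that point is exactly what it takes for such an $x$ to exist, not enough to stabilize the Valabrega--Valla isomorphism uniformly in $n$; (b) the base case is not covered by your first two paragraphs, because $\operatorname{height}(I)\geqslant 2$ does not survive the specialization, and once $\operatorname{height}(I\overline{A})=\dim\overline{A}$ there are no primes $\mathfrak{p}\neq\mathfrak{m}$ containing $I\overline{A}$, so the normal-ideal input invoked away from $\mathfrak{m}$ in paragraph two evaporates; (c) the claim that ``grade $\geqslant 2$ for a normal ideal'' follows from Ratliff--McAdam theory is not accurate --- this is exactly Theorem~\ref{RR} of the paper, whose proof requires Huckaba--Marley and, crucially, a specialization result of Hong--Ulrich on integral closure of a superficial reduction, and the asserted uniformity-in-$\mathfrak{p}$ ``using excellence'' is not justified.

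The paper's proof avoids the Sancho de Salas machinery and the dimension induction entirely. It works with the graded $\mathcal{R}(I)$-module $L^I(M)=\bigoplus_{n\geqslant 0}M/I^{n+1}M$ and the identity $\operatorname{grade}(G_+,G_I(M))=\min\{i: H^i_{R_+}(L^I(M))\neq 0\}$ (Corollary~\ref{count-sup}). The syzygy hypothesis is used once, at the outset, not repeatedly in an inductive step: tensoring $0\to M\to F\to L\to 0$ with $A/I^n$, then combining $\mathfrak{m}\notin\operatorname{Ass}^\infty_I(M)$ with the finite length of $\operatorname{Tor}^A_1(A/I^n,L)$ (from I.4) forces $\operatorname{Tor}^A_1(A/I^n,L)=0$ for $n\gg 0$, hence short exact sequences $0\to L^{I^n}(M)\to L^{I^n}(F)\to L^{I^n}(L)\to 0$. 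The vanishing $H^0_{R_+}(L^{I^n}(M))=H^1_{R_+}(L^{I^n}(M))=0$ then follows from $H^0=H^1=0$ for $L^{I^n}(F)$ --- which is Theorem~\ref{RR}, the single place where normality, excellence and $\operatorname{height}(I)\geqslant 2$ enter --- together with $H^0=0$ for $L^{I^n}(L)$ (which only needs $\operatorname{grade}(I,L)>0$). The uniformity-in-$n$ concern you flag as the real obstacle is then handled in one stroke by the Veronese argument of Theorem~\ref{1p} and Corollary~\ref{id}, which reads the threshold off the finite invariant $\operatorname{amp}_I(M)$ coming from Lemma~\ref{oo}, rather than by any finite-generation statement for the family $\{H^\bullet_{\mathfrak{m}}(M/I^jM)\}_j$.
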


We note that the assumption $M = \operatorname{Syz}^A_1(L)$ for an MCM $ A $-module $L$ is automatically satisfied if $A$ is Gorenstein.

We now describe the significance of Theorem~\ref{first}. The third author of this paper has worked extensively on associated graded rings and modules. He feels that the condition $ \operatorname{grade}( G_{I^n}(A)_+, G_{I^n}(M))  \geqslant 2 $ for all $ n \gg 0 $ is quite special. In `most cases' we will have only $ \operatorname{grade}( G_{I^n}(A)_+, G_{I^n}(M)) = 1 $ for all $ n \gg 0 $. So Theorem~\ref{first} implies that in `most cases' we should have $ \mathfrak{m} \in \operatorname{Ass}^{\infty}_I(M) $.

By a result of Melkersson and Schenzel \cite[Theorem~1]{MS}, it is known that for a finitely generated $A$-module $E$, the set $\operatorname{Ass}_A\left( \operatorname{Tor}^A_1(E, A/I^n) \right)$ is constant for all $ n \gg 0 $. We denote this stable value by $T^\infty_1(I, E)$. We note that if $L$ is a (non-free) MCM $A$-module which is free on the punctured spectrum of $A$, then $\operatorname{Tor}_1^A(L, A/I^n)$ has finite length for all $n \geqslant 1$. In this case, $\mathfrak{m} \notin  T^\infty_1(I, L)$ if and only if $\operatorname{Tor}^A_1(L, A/I^n) = 0$ for all $n \gg 0$. If $\mathfrak{m} \notin \operatorname{Ass}^\infty_I(A)$ (this holds  if $I$ is normal and $l(I) < d$), then it is easy to see that $\operatorname{Tor}^A_1(L, A/I^n) = 0$ for all $n \gg 0$ if and only if $ \mathfrak{m} \notin \operatorname{Ass}^\infty_I(\operatorname{Syz}^A_1(L)) $. If the latter holds, then by Theorem~\ref{first}, we will have $\operatorname{grade}( G_{I^n}( A)_+,  G_{I^n}(\operatorname{Syz}^A_1(L)) \geqslant 2$ for all $ n \gg 0 $. Thus another significance of Theorem \ref{first} is that it suggests that in `most cases' if $I$ is a normal ideal with height $ \geqslant 2$ and $ l(I) < d $, then for a non-free MCM module $ L $, we should have $\operatorname{Tor}^A_1(L, A/I^n)  \neq 0$ for all $n \gg 0$.

\textbf{II:}
In the previous subsection, we consider $T^\infty_1(I, M)$ when $M$ is MCM and locally free on the punctured spectrum. In this subsection, we consider the case when $M$ is Cohen-Macaulay of dimension $d -1$. A trivial case to consider is when $ \operatorname{projdim}_A(M) $ is finite. Then it is easy to see that if $ \mathfrak{m} \notin \operatorname{Ass}^\infty_I(A) $, then $ \mathfrak{m} \notin T^\infty_1(I, M) $.

If projective dimension of $M$ is infinite, then we are unable to analyze $T^\infty_1(I, M)$ for arbitrary Cohen-Macaulay rings. However, we have made progress in this question when $A$ is an isolated Gorenstein singularity.

In general, when a local ring $(R,\mathfrak{n})$ is Gorenstein, and $D$ is a finitely generated $R$-module, there exists an MCM approximation of $D$, i.e., an exact sequence $s \colon  0 \rightarrow Y \rightarrow X \rightarrow D \rightarrow 0$, where $X$ is an MCM $R$-module and $\operatorname{projdim}_R(Y) < \infty$. It is known that if $s^\prime \colon 0 \rightarrow Y^\prime \rightarrow X^\prime \rightarrow D \rightarrow 0$ is another MCM approximation of $D$, then $X$ and $X^\prime$ are stably isomorphic, i.e., there exist finitely generated free $R$-modules $F, G$ such that $X\oplus F \cong X^\prime \oplus G$.  It is clear that $X$ is free if and only if $\operatorname{projdim}_R(D) < \infty$. Thus if $\operatorname{projdim}_R(D) = \infty$, then $\operatorname{Syz}^R_1(X)$ is an invariant of $D$.

Our second result is

\begin{theorem}[= \ref{sir2}]\label{second}
	Let $ (A,\mathfrak{m}) $ be an excellent Gorenstein local ring of dimension $ d \geqslant 3 $. Suppose $ A $ has isolated singularity. Let $I$ be a normal ideal of $A$ with $\operatorname{height}(I)\geqslant2$ and $l(I)<d$. Let $M$ be a Cohen-Macaulay $A$-module of dimension $d-1$ and $\operatorname{projdim}_A(M)=\infty$. Let $s \colon 0 \rightarrow Y \rightarrow X \rightarrow M \rightarrow 0$ be an MCM approximation of $M$. Set $ N := \operatorname{Syz}^A_1(X) $. Then the following statements are equivalent:
	\begin{enumerate}[{\rm (i)}]
		\item $ \mathfrak{m} \notin T^\infty_1(I, M) $ {\rm(}i.e., $ \mathfrak{m} \notin \operatorname{Ass}_A(\operatorname{Tor}^A_1(M,A/{I^n})) $ for all $ n \gg 0 ${\rm )}.
		\item $ \mathfrak{m} \notin \operatorname{Ass}^\infty_I(N) $ {\rm (}equivalently, $ \operatorname{depth}(N/{I^nN}) \geqslant 1 $ for all $ n \gg 0 ${\rm )}.
	\end{enumerate}
	Furthermore, if this holds true, then $ \operatorname{grade}(G_{I^n}(A)_+, G_{I^n}(N)) \geqslant 2 $ for all $ n \gg 0 $.
\end{theorem}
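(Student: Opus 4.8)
The plan is to show that (i) and (ii) are both equivalent to the single asymptotic condition ``$\operatorname{Tor}^A_1(X,A/I^n)=0$ for all $n\gg0$'', and then to deduce the concluding grade estimate by feeding $N$ into Theorem~\ref{first}. The first task is to unwind the approximation $s$. Applying the depth lemma to $0\to Y\to X\to M\to 0$, with $\operatorname{depth} X=d$ and $\operatorname{depth} M=\dim M=d-1$, forces $\operatorname{depth} Y=d$, so $Y$ is MCM; since $\operatorname{projdim}_A Y<\infty$, the Auslander--Buchsbaum formula gives $Y\cong A^r$ free, and $r\geqslant 1$ as $\operatorname{depth} X\neq\operatorname{depth} M$. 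Because $\operatorname{projdim}_A M=\infty$, the module $X$, and therefore $N=\operatorname{Syz}^A_1(X)$, is non-free, and $A$ is not regular. Next, the depth lemma applied to $0\to N\to F_0\to X\to 0$ shows $N$ is MCM, and since $A$ has isolated singularity, the MCM modules $X$ and $N$ are free on the punctured spectrum; in particular $\operatorname{Tor}^A_i(X,A/I^n)$ has finite length for all $i\geqslant 1$ and all $n$. Finally, since $I$ is normal with $l(I)<d$, the Ratliff--McAdam result recalled in the introduction gives $\mathfrak m\notin\operatorname{Ass}^\infty_I(A)$, hence $\mathfrak m\notin\operatorname{Ass}_A(A/I^n)$ for all $n\gg0$.

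For the equivalence, tensor $0\to A^r\to X\to M\to 0$ with $A/I^n$; since $\operatorname{Tor}^A_1(A^r,-)=0$, this gives a short exact sequence
\[
0\longrightarrow\operatorname{Tor}^A_1(X,A/I^n)\longrightarrow\operatorname{Tor}^A_1(M,A/I^n)\longrightarrow W_n\longrightarrow 0,
\]
where $W_n=(A^r\cap I^nX)/I^nA^r$ embeds in $A^r/I^nA^r\cong(A/I^n)^r$ and the left-hand term has finite length. For $n\gg0$ one has $\mathfrak m\notin\operatorname{Ass}_A(A/I^n)$, hence $\mathfrak m\notin\operatorname{Ass}_A(W_n)$, and therefore $\mathfrak m\in\operatorname{Ass}_A(\operatorname{Tor}^A_1(M,A/I^n))$ if and only if $\operatorname{Tor}^A_1(X,A/I^n)\neq0$. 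On the other hand, tensoring $0\to N\to F_0\to X\to 0$ with $A/I^n$ identifies $\operatorname{Tor}^A_1(X,A/I^n)$ with the finite-length submodule $(N\cap I^nF_0)/I^nN$ of $N/I^nN$: if it vanishes for $n\gg0$ then $N/I^nN\hookrightarrow F_0/I^nF_0$, together with $\mathfrak m\notin\operatorname{Ass}_A(F_0/I^nF_0)=\operatorname{Ass}_A(A/I^n)$, gives $\mathfrak m\notin\operatorname{Ass}_A(N/I^nN)$; conversely a nonzero finite-length submodule of $N/I^nN$ would put $\mathfrak m$ in $\operatorname{Ass}_A(N/I^nN)$. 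Combining the two chains, and using the eventual constancy of $\operatorname{Ass}_A(\operatorname{Tor}^A_1(M,A/I^n))$ and of $\operatorname{Ass}_A(N/I^nN)$ (Melkersson--Schenzel, Brodmann) to move between the ``for this $n$'' and the ``for all $n\gg0$'' versions, I obtain $\text{(i)}\Leftrightarrow[\,\operatorname{Tor}^A_1(X,A/I^n)=0\text{ for all }n\gg0\,]\Leftrightarrow\text{(ii)}$; the reformulation of (ii) in terms of $\operatorname{depth}$ is immediate since $N\neq0$.

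For the last assertion, assume (i)--(ii) holds. After the standard harmless base change $A\to A[T]_{\mathfrak m A[T]}$, which preserves excellence, Gorensteinness, isolated singularity, the normality, height and analytic spread of $I$, all relevant associated primes, and the grades in question, I may assume $A$ has infinite residue field. Then $N$ is a non-free MCM $A$-module that is free on the punctured spectrum and has the form $N=\operatorname{Syz}^A_1(X)$ with $X$ MCM, $A$ is excellent Cohen--Macaulay and non-regular, $I$ is normal of height $\geqslant 2$, $d\geqslant 3$, and $\mathfrak m\notin\operatorname{Ass}^\infty_I(N)$ by (ii); that is, all hypotheses of Theorem~\ref{first} hold for $N$. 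Theorem~\ref{first} then gives $\operatorname{grade}(G_{I^n}(A)_+,G_{I^n}(N))\geqslant 2$ for all $n\gg0$, as required. I expect the only delicate part to be the bookkeeping in the equivalence — computing the cokernel of $\operatorname{Tor}^A_1(X,A/I^n)\hookrightarrow\operatorname{Tor}^A_1(M,A/I^n)$ as a submodule of $(A/I^n)^r$, tracking which modules have finite length, and legitimising the passage between the pointwise-in-$n$ and the asymptotic statements — whereas the genuinely substantive input, that the grade is $\geqslant 2$ and not merely $\geqslant 1$, is entirely delegated to Theorem~\ref{first}.
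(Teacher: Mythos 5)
Your proof is correct and follows essentially the same route as the paper: both arguments bridge (i) and (ii) through the condition $\operatorname{Tor}^A_1(X, A/I^n)=0$ for $n\gg 0$, extracted from the Tor sequences of $0\to A^r\to X\to M\to 0$ and $0\to N\to F_0\to X\to 0$ together with the finite length of $\operatorname{Tor}^A_1(X,A/I^n)$ and $\mathfrak m\notin\operatorname{Ass}^\infty_I(A)$ (the paper packages the first equivalence as Lemma~\ref{oi}). The final grade estimate is likewise obtained in the same way, by applying Theorem~\ref{first} (equivalently Theorem~\ref{sir1} plus Theorem~\ref{RR}) to $N=\operatorname{Syz}^A_1(X)$, which you correctly verify is a non-free MCM first syzygy, free on the punctured spectrum.
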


As per our discussion after Theorem~\ref{first}, it follows that in `most cases' we should have $ \mathfrak{m} \in \operatorname{Ass}_A(\operatorname{Tor}^A_1(M,A/{I^n}))~\mbox{for all } n\gg0$.

\textbf{III$\alpha$:}
Let $(A,\mathfrak{m})$ be a local complete intersection ring of codimension $c$. Let $M$ and $N$ be finitely generated $A$-modules. Set
\[
E(M,N) := \bigoplus_{i \geqslant 0}\operatorname{Ext}^i_A(M,N), \quad \mbox{and} \quad T(M,N) := \bigoplus_{i \geqslant 0}\operatorname{Tor}^A_i(M,N).
\]
It is well-known that $ E(M,N)\otimes_A \widehat{A} $ and $ T(M,N)\otimes_A \widehat{A} $ are modules over a ring of cohomology operators $S := \widehat{A}[\xi_1,\ldots, \xi_c]$, where $ \widehat{A} $ is the $ \mathfrak{m} $-adic completion of $ A $. Moreover, $E(M,N)\otimes_A \widehat{A}$ is a finitely generated graded $S$-module. But the $S$-module $T(M,N)\otimes_A \widehat{A}$ is very rarely finitely generated. However, by a result of Gulliksen \cite[Theorem~3.1]{G}, if $\operatorname{Tor}_i^A(M,N)$ has finite length for all $i \gg 0$ (say from $i \geqslant i_0$), then the $ S $-submodule
\[
	T_{\geqslant i_0}(M,N) \otimes_A \widehat{A} \;\; := \; \bigoplus_{i \geqslant i_0}\operatorname{Tor}^{\widehat{A}}_i\big(\widehat{M},\widehat{N}\big) \quad \mbox{is *Artinian.}
\]

By standard arguments, for each $ l = 0, 1 $, it follows that $ \operatorname{Ass}_A(\operatorname{Ext}^{2i+l}_A(M,N)) $ is a constant set for all $ i \gg 0 $. However, we do not have a similar result for Tor. By a result of Avramov and Buchweitz (Theorem~\ref{theorem: vanishing of Tor}), the case when $\operatorname{Tor}^A_i(M, N) = 0$ for all $i \gg 0$ is well-understood. Our third result is that

\begin{theorem}[$ = $ \ref{corollary: asymptotic Ass on Tor}]\label{third-alpha}
	Let $ A $ be a local complete intersection ring. Let $ M $ and $ N $ be finitely generated $ A $-modules.  Assume that $ \operatorname{Tor}_i^A(M, N) \neq 0 $ for some $ i > \dim(A) $. Then there exists a non-empty finite subset $ \mathcal{A} $ of $ \operatorname{Spec}(A) $ such that for every $ \mathfrak{p} \in \mathcal{A} $, at least one of the following statements holds true:
	\begin{enumerate}[{\rm (i)}]
		\item $ \mathfrak{p} \in \operatorname{Ass}_A\left( \operatorname{Tor}_{2i}^A(M, N) \right) $ for all $ i \gg 0 $;
		\item $ \mathfrak{p} \in \operatorname{Ass}_A\left( \operatorname{Tor}_{2i+1}^A(M, N) \right) $ for all $ i \gg 0 $.
	\end{enumerate}
\end{theorem}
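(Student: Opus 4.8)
The plan is to pass to the completion and work with the graded module structure over the ring of cohomology operators $S = \widehat{A}[\xi_1,\dots,\xi_c]$, reducing the statement to a question about associated primes of a graded module. First I would reduce to the case $\operatorname{Tor}^A_i(M,N)$ has finite length for all $i \gg 0$: if not, then some $\mathfrak{p} \neq \mathfrak{m}$ lies in the support of $\operatorname{Tor}^A_i(M,N)$ for infinitely many $i$, and localizing at a minimal such $\mathfrak{p}$ (so that the localized Tor's have finite length over $A_{\mathfrak{p}}$) lets us run the finite-length argument there; the hypothesis $\operatorname{Tor}_i^A(M,N)\neq 0$ for some $i > \dim A$ together with the theory of support varieties (Avramov--Buchweitz, cited as Theorem~\ref{theorem: vanishing of Tor}) guarantees the Tor modules do not eventually vanish, so such a $\mathfrak{p}$ — or $\mathfrak{m}$ itself — exists. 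This is where I expect to need care: organizing the descent to the right prime and making sure the non-vanishing of infinitely many Tor's is preserved.

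Next, having localized and completed so that $T_{\geqslant i_0}(M,N)\otimes_A\widehat{A}$ is a $*$Artinian graded $S$-module (by Gulliksen's theorem, quoted in the excerpt), I would use the standard structure theory: a $*$Artinian graded module over the polynomial ring $S$ decomposes, up to finite length pieces, according to the action of the even-degree subalgebra $\widehat{A}[\xi_1,\dots,\xi_c]$ (all $\xi_j$ have cohomological degree $2$, so the Tor total module is genuinely a module over $S$ sitting in all degrees, but the two ``halves'' $\bigoplus_i \operatorname{Tor}_{2i}$ and $\bigoplus_i \operatorname{Tor}_{2i+1}$ are each graded $S$-modules). Each of these two halves is a $*$Artinian graded module over $S$; for such a module $W = \bigoplus_n W_n$, Matlis duality turns it into a finitely generated graded $S$-module $W^\vee$, and the classical result (Brodmann--type stability, or directly the fact that $\operatorname{Ass}$ of a finitely generated graded module is finite and the graded pieces eventually have a stable associated-prime behavior) shows that $\operatorname{Ass}_A(W_n)$ is eventually constant in $n$. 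Applying this to both halves yields two eventually constant sets $\mathcal{A}_{\mathrm{ev}}$ and $\mathcal{A}_{\mathrm{odd}}$.

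The core point is then to show $\mathcal{A}_{\mathrm{ev}} \cup \mathcal{A}_{\mathrm{odd}} \neq \varnothing$, i.e., that not both halves are eventually zero. But if both were eventually zero we would have $\operatorname{Tor}^A_i(M,N) = 0$ for all $i \gg 0$, which by Avramov--Buchweitz forces $\operatorname{Tor}^A_i(M,N) = 0$ for all $i > \dim A$ (the vanishing is ``rigid'' past the dimension), contradicting our hypothesis that $\operatorname{Tor}_i^A(M,N)\neq 0$ for some $i > \dim A$. Hence at least one half is nonzero infinitely often, its eventually-constant associated-prime set is nonempty, and we take $\mathcal{A}$ to be the (finite, nonempty) union $\mathcal{A}_{\mathrm{ev}} \cup \mathcal{A}_{\mathrm{odd}}$; by construction every $\mathfrak{p}\in\mathcal{A}$ satisfies (i) or (ii). Finally I would transfer the conclusion back from $\widehat{A}$ to $A$ using the faithful flatness of completion and the compatibility $\operatorname{Ass}_A(\operatorname{Tor}^A_i(M,N)) = \{\mathfrak{p}\cap A : \mathfrak{p}\in\operatorname{Ass}_{\widehat{A}}(\operatorname{Tor}^{\widehat{A}}_i(\widehat{M},\widehat{N}))\}$, which is routine. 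The main obstacle, as noted, is the first step — pinning down a prime where a finite-length reduction is available and verifying the non-vanishing persists — rather than the graded-module bookkeeping, which is standard once set up.
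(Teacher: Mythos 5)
Your proposal is correct and follows essentially the same route as the paper's proof of Theorem~\ref{theorem: asymptotic Ass on Tor} and Corollary~\ref{corollary: asymptotic Ass on Tor}: localize at a minimal prime of the union of supports of $\operatorname{Tor}_i$ for $i>\dim A$ (which by Avramov--Buchweitz is the same as the union over a finite window, hence a well-defined closed set), deduce that the localized Tor's have finite length, invoke Gulliksen's $*$Artinian structure together with graded Matlis duality to control the even and odd halves, and use the rigidity of vanishing (Theorem~\ref{theorem: vanishing of Tor}) to rule out both halves being eventually zero. The only cosmetic difference is that the paper phrases the stability step in terms of the Hilbert--Serre theorem (the lengths of the graded pieces of the dual are eventually polynomial), whereas you phrase it as eventual constancy of $\operatorname{Ass}$ of graded pieces of a finitely generated graded module; over a local ring for finite-length pieces these are interchangeable, since $\operatorname{Ass}$ of each piece is either empty or $\{\mathfrak{m}\}$. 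Your $\mathcal{A}$ ends up a singleton rather than the full set of minimal primes of the support locus as in the paper, but the theorem only asserts existence of a nonempty finite set, so this suffices.
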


\textbf{III$\beta$:}
In \cite[Corollary~4.3]{GP}, the first and the third author proved that if $(A,\mathfrak{m})$ is a local complete intersection ring, $I$ is an ideal of $A$, and $M, N$ are finitely generated $A$-modules, then for every $l = 0, 1$, the set $\operatorname{Ass}_A(\operatorname{Ext}_A^{2i+l}(M, N/I^nN))$ is constant for all $i, n \gg 0$. We do not have a similar result for Tor. It follows from \cite[Theorem~6.1]{GP} that complexity of $N/I^n N$ is stable for all $ n \gg 0 $. Thus, by results of Avramov and Buchweitz, the case when $\operatorname{Tor}^A_i(M, N/I^nN) = 0$ for all $ i, n \gg 0 $ is well-understood. Our final result is

\begin{theorem}[$ = $ \ref{corollary: asymptotic ass: Tor: for special ideals}]\label{third-beta}
	Let $ A $ be a local complete intersection ring. Let $ M $ be a finitely generated $ A $-module, and $ I $ be an ideal of $ A $. Suppose either $ I $ is principal or $ I $ has a principal reduction generated by an $ A $-regular element. Then there exist $ i_0 $ and $ n_0 $ such that either $ \operatorname{Tor}^A_i(M, N/I^nN) = 0 $ for all $ i \geqslant i_0 $ and $ n \geqslant n_0 $, or there is a non-empty finite subset $ \mathcal{A} $ of $ \operatorname{Spec}(A) $ such that for every $ \mathfrak{p} \in \mathcal{A} $, at least one of the following statements holds true:
	\begin{enumerate}[{\rm (i)}]
		\item $ \mathfrak{p} \in \operatorname{Ass}_A\left( \operatorname{Tor}_{2i}^A(M, A/I^n) \right) $ for all $ i \geqslant i_0 $ and $ n \geqslant n_0 $;
		\item $ \mathfrak{p} \in \operatorname{Ass}_A\left( \operatorname{Tor}_{2i+1}^A(M, A/I^n) \right) $ for all $ i \geqslant i_0 $ and $ n \geqslant n_0 $.
	\end{enumerate}
\end{theorem}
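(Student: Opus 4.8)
The plan is to reduce Theorem~\ref{third-beta} to Theorem~\ref{third-alpha}, applied not to $A/I^n$ but to a single fixed module built from $I$: the hypothesis on $I$ is exactly what is needed to make the family $\{A/I^n\}_{n\gg 0}$ constant up to a module isomorphism and a homological shift by one. (We write the conclusion for $\operatorname{Tor}^A_i(M,A/I^n)$, as in alternatives (i)--(ii).) The first step is a bookkeeping lemma: \emph{under the hypotheses there exist an integer $n_0$ and a finitely generated $A$-module $C$ such that}
\[
\operatorname{Tor}^A_i(M, A/I^n) \;\cong\; \operatorname{Tor}^A_{i-1}(M, C) \qquad\text{for all } i\geqslant 2 \text{ and all } n\geqslant n_0.
\]

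To prove the lemma, distinguish the two cases. If $I=(x)$ is principal, then $I^n=(x^n)$; since $A$ is Noetherian the ascending chain $(0:_A x)\subseteq(0:_A x^2)\subseteq\cdots$ stabilizes, say $(0:_A x^n)=K$ for $n\geqslant n_0$, so the surjection $A\xrightarrow{\,x^n\,}x^nA$ identifies $x^nA\cong A/K$ for these $n$; the short exact sequence $0\to x^nA\to A\to A/I^n\to 0$ together with $\operatorname{Tor}^A_i(M,A)=0$ for $i\geqslant 1$ then gives the claim with $C:=A/K$. If instead $I$ has a principal reduction $(x)$ with $x$ an $A$-regular element, let $r$ be its reduction number, so $I^n=x^{\,n-r}I^r$ for $n\geqslant r$; as $x$ is regular on the ideal $I^r\subseteq A$, multiplication by $x^{\,n-r}$ is an injection $I^r\hookrightarrow A$ with image $I^n$, whence $0\to I^r\xrightarrow{\,x^{n-r}\,}A\to A/I^n\to 0$ is exact and, again using $\operatorname{Tor}^A_i(M,A)=0$ for $i\geqslant 1$, gives the claim with $C:=I^r$ and $n_0:=r$. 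The two cases together cover the hypothesis.

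The second step applies Theorem~\ref{third-alpha} to the pair $(M,C)$. If $\operatorname{Tor}^A_j(M,C)=0$ for all $j>\dim(A)$, then the bookkeeping lemma yields $\operatorname{Tor}^A_i(M,A/I^n)=0$ for all $i\geqslant\dim(A)+2$ and all $n\geqslant n_0$, which is the first alternative. Otherwise $\operatorname{Tor}^A_j(M,C)\neq 0$ for some $j>\dim(A)$, and Theorem~\ref{third-alpha} furnishes a non-empty finite set $\mathcal{A}\subseteq\operatorname{Spec}(A)$ such that every $\mathfrak{p}\in\mathcal{A}$ lies either in $\operatorname{Ass}_A(\operatorname{Tor}^A_{2i}(M,C))$ for all $i\gg 0$ or in $\operatorname{Ass}_A(\operatorname{Tor}^A_{2i+1}(M,C))$ for all $i\gg 0$. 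Transporting this through the isomorphism $\operatorname{Tor}^A_i(M,A/I^n)\cong\operatorname{Tor}^A_{i-1}(M,C)$, which reverses the parity of the homological degree, turns the first case into ``$\mathfrak{p}\in\operatorname{Ass}_A(\operatorname{Tor}^A_{2i+1}(M,A/I^n))$ for all $i\gg 0$ and $n\geqslant n_0$'' and the second into ``$\mathfrak{p}\in\operatorname{Ass}_A(\operatorname{Tor}^A_{2i}(M,A/I^n))$ for all $i\gg 0$ and $n\geqslant n_0$''. Absorbing the finitely many thresholds contributed by the elements of $\mathcal{A}$ (together with the bound $\dim(A)+2$ needed for the shift) into a single $i_0$ completes the proof with this $\mathcal{A}$, this $i_0$, and $n_0$ as above.

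I expect the only genuine work to be the bookkeeping lemma, and within it the non-principal case: one must take care that it is the regularity of $x$ \emph{on $A$} (not on $I^r$ or on $M$) that is being used, and that the map $I^r\to A$ in the short exact sequence is ``multiply by $x^{\,n-r}$, then include'', so that the $n$-dependence of the sequence disappears only after applying $\operatorname{Tor}^A_\bullet(M,-)$ and invoking $\operatorname{Tor}^A_i(M,A)=0$ for $i\geqslant 1$. The principal case needs the elementary remark that $x$ may be a zerodivisor, which is precisely what the stabilization of $(0:_A x^n)$ takes care of. Beyond this, everything is a formal consequence of Theorem~\ref{third-alpha} and the long exact sequence for Tor.
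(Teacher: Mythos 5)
Your proof is correct, and the overall strategy --- reduce $\operatorname{Tor}^A_i(M,A/I^n)$ for $i,n\gg 0$ to $\operatorname{Tor}$ against a single fixed module and then invoke Theorem~\ref{third-alpha} --- coincides with the paper's. The bookkeeping details differ: you obtain a single dimension shift $\operatorname{Tor}^A_i(M,A/I^n)\cong\operatorname{Tor}^A_{i-1}(M,C)$ in both cases, which forces the parity flip you correctly track, whereas the paper obtains a parity-preserving shift by $2$ in the principal case (passing through the module $(a^n)$ via two short exact sequences and stabilizing $(0:_A a^n)$) and a shift-free isomorphism $\operatorname{Tor}^A_i(M,A/I^n)\cong\operatorname{Tor}^A_i(M,A/I^{n_0})$ in the principal-reduction case (splicing $0\to A/I^n\xrightarrow{\,y\,}A/I^{n+1}\to A/(y)\to 0$ and using that $\operatorname{Tor}$ against $A/(y)$ vanishes in degree $\geqslant 2$). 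You also apply Theorem~\ref{third-alpha} directly in its general local form, whereas the paper first proves a \emph{Min}-version under Hypothesis~\ref{hypothesis 2} and then descends via Lemma~\ref{lemma: Ass: Completion}. None of these differences affects correctness; both arguments are sound and of comparable length.
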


\emph{Techniques used to prove our results:}
To prove Theorems~\ref{third-alpha} and \ref{third-beta}, we use the well-known technique of Eisenbud operators over resolutions of modules over complete complete-intersection rings.  We also use results of Gulliksen and Avramov-Buchweitz stated above.       	

However, to  prove Theorems~\ref{first} and \ref{second}, we use a new technique in the study of asymptotic primes, i.e., we investigate the function
\[
\xi_M^I(n) := \operatorname{grade}(G_{I^n}(A)_+, G_{I^n}(M)).
\] 
Note that by a result of Elias \cite[Proposition~2.2]{E}, we have that $\operatorname{depth}(G_{I^n}(A))$ is constant for all $n \gg 0$ (and a similar argument works for modules). However, the function $\xi_M^I$ (when $\dim(A/I) > 0$) has not been investigated before, neither in the study of blow-up algebra's or with the connection with associate primes. Regarding $\xi_M^I$, we prove two results: The first is

\begin{theorem}[$ = $ \ref{thm-xi-1}]\label{xi-1}
	Let $ (A,\mathfrak{m}) $ be a Noetherian local ring. Let $ I $ be an ideal of $ A $, and $M$ be a finitely generated $ A $-module such that $ \operatorname{grade}(I,M) = g > 0 $. Then either $ \operatorname{grade}(G_{I^n}(A)_+, G_{I^n}(M)) = 1 $ for all $ n \gg 0 $, or
	\[
		\operatorname{grade}(G_{I^n}(A)_+, G_{I^n}(M)) \geqslant 2  \quad \mbox{for all } \; n \gg 0.
	\] 
\end{theorem}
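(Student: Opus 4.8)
The plan is to read $\operatorname{grade}(G_{I^n}(A)_+,G_{I^n}(M))$ off from graded local cohomology over the extended Rees algebra and then exploit that passing from $I$ to $I^n$ is a Veronese operation. Put $\mathcal{S}=A[It,t^{-1}]=\bigoplus_{j\in\mathbb{Z}}I^jt^j$ (with $I^j=A$ for $j\le0$), $\mathcal{S}_M=\bigoplus_{j\in\mathbb{Z}}I^jMt^j$, $u=t^{-1}\in\mathcal{S}_{-1}$, $\mathfrak{a}=\mathcal{S}_{\ge1}$, and $H^i:=H^i_\mathfrak{a}(\mathcal{S}_M)$. Since $G_I(A)_+$ is the image of $\mathfrak{a}$ in $G_I(A)=\mathcal{S}/(u)$, one has $\operatorname{grade}(G_I(A)_+,G_I(M))=\min\{i:H^i_\mathfrak{a}(G_I(M))\ne0\}$, and the exact sequence $0\to\mathcal{S}_M(1)\xrightarrow{\,u\,}\mathcal{S}_M\to G_I(M)\to0$ computes $H^\bullet_\mathfrak{a}(G_I(M))$ from the $H^i$ and multiplication by $u$ on them. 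The hypothesis $\operatorname{grade}(I,M)=g>0$ forces $H^0=0$ ($xt\in\mathfrak{a}$ is $\mathcal{S}_M$-regular for $M$-regular $x\in I$); hence the long exact sequence identifies $H^0_\mathfrak{a}(G_I(M))$ with $(0:_{H^1}u)$ and gives $0\to H^1/uH^1\to H^1_\mathfrak{a}(G_I(M))\to(0:_{H^2}u)\to0$. Since $H^1$ is bounded above (see below), $uH^1=H^1$ forces $H^1=0$; so $\operatorname{grade}(G_I(A)_+,G_I(M))\ge2$ iff $H^1=0$ and $u$ is a nonzerodivisor on $H^2$. For $I^n$ everything is the same with the $n$-th Veronese: the extended Rees algebra of $I^n$ is $\mathcal{S}^{(n)}$, the extended Rees module is $(\mathcal{S}_M)^{(n)}$, the role of $u$ is played by $u^n$, and (checking on the Čech complex of a degree-one generating set of $\mathfrak{a}$) $H^i_{(\mathcal{S}^{(n)})_{\ge1}}((\mathcal{S}_M)^{(n)})=(H^i)^{(n)}$. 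Therefore $\operatorname{grade}(G_{I^n}(A)_+,G_{I^n}(M))\ge2$ iff $(H^1)^{(n)}=0$ and $u^n$ is a nonzerodivisor on $(H^2)^{(n)}$.

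I would then record the structural facts about $H^i=H^i_\mathfrak{a}(\mathcal{S}_M)$: each graded piece is a finitely generated $A$-module and $(H^i)_j=0$ for $j\gg0$ (finite Castelnuovo--Mumford regularity of the Rees module over the Noetherian base $A$); localising at $u$ gives $(H^i)_u\cong H^i_I(M)[t,t^{-1}]$ by flat base change; consequently $H^i$ has only finitely many nonzero graded components precisely when $H^i_I(M)=0$, and otherwise it is nonzero in every sufficiently negative degree; and the $u$-power torsion $\Gamma_{(u)}(H^i)=(0:_{H^i}u^\infty)$ likewise has only finitely many nonzero graded components (it is killed by a fixed power of $u$ and, lying in $H^i$, vanishes in sufficiently negative as well as positive degrees). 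In particular $\Gamma_{(u)}(H^i)_0$ is a fixed finitely generated $A$-module, and $(0:_{(H^i)_0}u^n)=\Gamma_{(u)}(H^i)_0$ for all $n\gg0$. Separately I would use Ratliff--Rush stability for modules: there is an $N$ with $\widetilde{I^jM}=I^jM$ for all $j\ge N$, hence $\widetilde{(I^n)^kM}=\widetilde{I^{nk}M}=I^{nk}M=(I^n)^kM$ for every $k\ge1$ once $n\ge N$, which is exactly the condition ensuring a general superficial element of $I^n$ has initial form regular on $G_{I^n}(M)$; thus $\operatorname{grade}(G_{I^n}(A)_+,G_{I^n}(M))\ge1$ for all $n\gg0$.

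The argument would then finish as follows. If $H^1_I(M)\ne0$ then $H^1$ is nonzero in every sufficiently negative degree, so $(H^1)^{(n)}\ne0$ for every $n$, whence $\operatorname{grade}(G_{I^n}(A)_+,G_{I^n}(M))<2$ and therefore equals $1$ for all $n\gg0$. If $H^1_I(M)=0$ then $g\ge2$ and (one checks, via the $\mathfrak{a}$-saturation of $M$) $(H^1)_0=0$, so $H^1$ has finitely many nonzero components, none in degree $0$, and $(H^1)^{(n)}=0$ for $n\gg0$; in that range $\operatorname{grade}(G_{I^n}(A)_+,G_{I^n}(M))\ge2$ iff $u^n$ is a nonzerodivisor on $(H^2)^{(n)}$, i.e. iff $\Gamma_{(u)}(H^2)_{nk}=0$ for all $k$. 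Because $\Gamma_{(u)}(H^2)$ occupies only finitely many degrees, the only multiple of a large $n$ at which it can be nonzero is $0$, so for $n\gg0$ the condition becomes simply $\Gamma_{(u)}(H^2)_0=0$, independent of $n$. Hence: if $H^1_I(M)=0$ and $\Gamma_{(u)}(H^2)_0=0$ then $\operatorname{grade}(G_{I^n}(A)_+,G_{I^n}(M))\ge2$ for all $n\gg0$; otherwise it equals $1$ for all $n\gg0$. This is exactly the asserted dichotomy.

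The step I expect to be the real obstacle is establishing the structural facts of the second paragraph over an arbitrary Noetherian local ring — identifying precisely which graded components of $H^i_\mathfrak{a}(\mathcal{S}_M)$ and of its $u$-torsion can be nonzero, showing that unboundedness below is governed exactly by $H^i_I(M)$, and pinning down $(H^1)_0$ when $\operatorname{grade}(I,M)\ge2$. This ``analysis of $\xi_M^I$'' is the new ingredient, and replaces in this generality the cleaner tools available over excellent or Cohen--Macaulay rings; the compatibility of the $n$-fold twist with the Veronese, and the final combinatorial bookkeeping (once the relevant modules are known to live in finitely many degrees, with $0$ the only possibly shared one), are then routine.
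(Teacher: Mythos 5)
Your reformulation in terms of the extended Rees module $\mathcal{S}_M=\hat{\mathcal{R}}(I,M)$, the element $u=t^{-1}$, and $H^i=H^i_{\mathfrak{a}}(\mathcal{S}_M)$ is essentially equivalent to the paper's framework (the paper works with $L^I(M)=\bigoplus_{n\geqslant0}M/I^{n+1}M$, which is the cokernel of $\mathcal{S}_M\hookrightarrow M[t,t^{-1}]$, so $H^{i+1}_{\mathfrak{a}}(\mathcal{S}_M)\cong H^i_{R_+}(L^I(M))(-1)$ in the relevant range). The bookkeeping relating $\operatorname{grade}(G_{I^n}(A)_+,G_{I^n}(M))$ to $(H^1)^{<n>}$ and the $u^n$-torsion of $(H^2)^{<n>}$, and the Veronese compatibility, is correct. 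But the argument has a genuine gap precisely where you yourself flag it: the ``structural facts'' of your second paragraph are not established, and two of them are not justified by the reasons you offer.

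Specifically: (a) From $(H^i)_u\cong H^i_I(M)[t,t^{-1}]$ you conclude that $H^1_I(M)\ne0$ forces $H^1$ to be nonzero in \emph{every} sufficiently negative degree. The localisation argument only gives nonvanishing in \emph{infinitely many} negative degrees (a direct limit over $(H^1)_{j-k}$ being nonzero does not preclude individual terms vanishing). (b) You claim $\Gamma_{(u)}(H^i)$ occupies only finitely many degrees because ``it is killed by a fixed power of $u$ and, lying in $H^i$, vanishes in sufficiently negative as well as positive degrees.'' But $H^i$ need not vanish in negative degrees, and being killed by $u^N$ does not by itself force the graded pieces to vanish for $j\ll0$ (multiplication by $u$ lowers degree, so $u^N\Gamma=0$ only says the composite $\Gamma_j\to\Gamma_{j-N}$ is zero); moreover the finite generation of $\Gamma_{(u)}(H^i)$ over $\mathcal{S}$ — needed for a uniform killing power — is itself not obvious since $H^i$ is not finitely generated. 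Claims (a) and (b) are exactly what does the work in the final dichotomy, so without them the proof does not close.

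What the paper does instead is prove a sharp ``crucial lemma'' (Lemma~\ref{crucial}): for $c<0$, $H^1(L^I(M))_c=0$ forces $H^1(L^I(M))_j=0$ for all $j\leqslant c$. The proof is not formal; it uses an $M$-superficial element $x$ (hence the standing infinite-residue-field hypothesis in Section 2) and the second fundamental exact sequence \eqref{2nd}, producing for $n<0$ an injection $H^1(L^I(M))_{n-1}\hookrightarrow H^1(L^I(M))_n$ because $H^0(L^I(M/xM))_n=0$. Combined with the a priori vanishing in large positive degrees (Lemma~\ref{oo}), this gives exactly the all-or-nothing behaviour in negative degrees that your facts (a)–(b) would supply, and it is then fed into the invariants $\xi_I(M)$, $\operatorname{amp}_I(M)$ and the Veronese argument (Theorem~\ref{1p}, Corollary~\ref{id}). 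To complete your proof along your lines you would need to supply this superficial-element argument (or an equivalent) for $H^2_{\mathfrak{a}}(\mathcal{S}_M)$; the abstract localisation-at-$u$ facts are not enough.

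A minor point: the Noetherian-local hypothesis in the statement as you quote it is slightly too weak for the method — the superficial element machinery (or your ``general superficial element of $I^n$'') requires the residue field to be infinite, and indeed the paper imposes this throughout Section 2; alternatively one passes to $A[X]_{\mathfrak{m}A[X]}$ first.
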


Our second result in this direction is

\begin{theorem}[$ = $ \ref{thm-xi-2}]\label{xi-2}
	Let $ (A,\mathfrak{m}) $ be a Cohen-Macaulay local ring, and $ I $ be an ideal of $ A $ such that $ \operatorname{height}(I) \geqslant \dim(A) - 2 $. Let $ M $ be an MCM $ A $-module. Then $ \operatorname{grade}(G_{I^n}(A)_+, G_{I^n}(M)) $ is constant  for all $ n \gg 0 $.
\end{theorem}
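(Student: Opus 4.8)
The plan is to argue by induction on $\dim(A/I)$, which is at most $2$ by hypothesis; the base case $\dim(A/I)=0$ reduces to Elias's theorem, and the inductive step is a generic hyperplane section combined with Theorem~\ref{thm-xi-1}. We may assume the residue field of $A$ is infinite, via the faithfully flat base change $A\rightsquigarrow A[T]_{\mathfrak{m}A[T]}$, which preserves $\dim A$, $\operatorname{ht}(I)$, the MCM property of $M$, and all the grades and depths involved, and commutes with the formation of $G_{I^n}(-)$. By the Valabrega--Valla criterion, lifting a homogeneous $G_{I^n}(M)$-regular sequence in $G_{I^n}(A)_+$ to a sequence in $I$ with the same initial forms produces an $M$-regular sequence, so
\[
\operatorname{grade}\bigl(G_{I^n}(A)_+,\,G_{I^n}(M)\bigr)\ \le\ \operatorname{grade}(I^n,M)\ =\ \operatorname{grade}(I,M)\ \le\ \dim M\ =\ \dim A
\]
for every $n$. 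Hence if $\operatorname{grade}(I,M)\le 1$ or $\dim A\le 2$, then $\operatorname{grade}(G_{I^n}(A)_+,G_{I^n}(M))\in\{0,1,2\}$, and Theorem~\ref{thm-xi-1} (together with the observation that this grade is identically $0$ when $\operatorname{grade}(I,M)=0$) shows it is eventually constant. So we may assume $d:=\dim A\ge 3$ and $\operatorname{grade}(I,M)\ge 2$; in particular Theorem~\ref{thm-xi-1} applies, and we may further assume $g_n:=\operatorname{grade}(G_{I^n}(A)_+,G_{I^n}(M))\ge 2$ for all $n\gg 0$ (otherwise it is eventually $1$ and we are done).

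Next we record a complementary lower bound. Let $\mathfrak{M}_n$ be the graded maximal ideal of $G_{I^n}(A)$, and choose a maximal $G_{I^n}(M)$-regular sequence in $G_{I^n}(A)_+$, of length $g_n$; its cokernel $Q_n$ has $\operatorname{grade}(G_{I^n}(A)_+,Q_n)=0$, so some graded $\mathfrak{P}\in\operatorname{Ass}(Q_n)$ contains $G_{I^n}(A)_+$, and as $G_{I^n}(A)/\mathfrak{P}$ is a quotient of $A/I^n$ we get $\operatorname{depth}_{\mathfrak{M}_n}(G_{I^n}(M))=g_n+\operatorname{depth}_{\mathfrak{M}_n}(Q_n)\le g_n+\dim(A/I)$. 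Since $g_n\le\operatorname{depth}_{\mathfrak{M}_n}(G_{I^n}(M))$ trivially, and since Elias's theorem (module version) makes $\operatorname{depth}_{\mathfrak{M}_n}(G_{I^n}(M))$ eventually equal to a constant $\delta$, we obtain
\[
\delta-\dim(A/I)\ \le\ g_n\ \le\ \delta\qquad(n\gg 0),
\]
so $g_n$ takes values in a fixed window of at most $\dim(A/I)+1\le 3$ consecutive integers. If $\dim(A/I)=0$ this window is a single point, proving the theorem in that case (in particular when $I$ is $\mathfrak{m}$-primary).

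Now suppose $\delta':=\dim(A/I)\in\{1,2\}$. Choose $y\in\mathfrak{m}$ whose image in $A/I$ is part of a system of parameters, which avoids the minimal primes of $A$, and which avoids every associated prime of the $A/I$-module $G_I(M)$ other than $\mathfrak{m}$ (finitely many primes, each different from $\mathfrak{m}$, so prime avoidance applies). Then $y$ is $A$-regular, hence $M$-regular; $\bar A:=A/yA$ is Cohen--Macaulay of dimension $d-1$; $\bar M:=M/yM$ is MCM over $\bar A$; and $\bar I:=(I+yA)/yA$ satisfies $\dim(\bar A/\bar I)=\delta'-1$, hence $\operatorname{ht}_{\bar A}(\bar I)=(d-1)-(\delta'-1)=\operatorname{ht}_A(I)\ge(d-1)-2$, so the hypotheses of the theorem hold for $(\bar A,\bar I,\bar M)$ with strictly smaller $\dim(\bar A/\bar I)$. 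The point is that the initial form $y^{*}$ of $y$ lies in degree $0$ of $G_{I^n}(A)$; the choice of $y$ forces $y^{*}$ to be a filter-regular element on $G_{I^n}(M)$ for all $n\gg 0$, and because $g_n\ge 1$ there (so $\mathfrak{M}_n\notin\operatorname{Ass}(G_{I^n}(M))$) the element $y^{*}$ is in fact a nonzerodivisor on $G_{I^n}(M)$; an elementary telescoping argument with the $I$-adic filtration then gives $G_{I^n}(M)/y^{*}G_{I^n}(M)\cong G_{\bar I^n}(\bar M)$ for $n\gg 0$. Consequently $\operatorname{grade}\bigl(G_{\bar I^n}(\bar A)_+,G_{\bar I^n}(\bar M)\bigr)=\operatorname{grade}\bigl(G_{I^n}(A)_+,\,G_{I^n}(M)/y^{*}G_{I^n}(M)\bigr)$, and the long exact sequence in $G_{I^n}(A)_+$-local cohomology, together with graded Nakayama applied to the (finitely generated) graded components of $H^{g_n}_{G_{I^n}(A)_+}(G_{I^n}(M))$, shows this equals $g_n$ or $g_n-1$. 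By the inductive hypothesis the left-hand side is eventually a constant $c$, so $g_n\in\{c,c+1\}$ for $n\gg 0$.

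The crux --- and the step I expect to be the main obstacle --- is promoting the alternative $g_n\in\{c,c+1\}$ to an equality. The grade drops under the section (i.e.\ $g_n=c+1$) precisely when $y^{*}$ is a zerodivisor on the bottom local cohomology module $H^{g_n}_{G_{I^n}(A)_+}(G_{I^n}(M))$, which is an $A/I^n$-module, hence supported on $\operatorname{Spec}(A/I)$; when $g_n<\delta$ this module has no associated prime at $\mathfrak{M}_n$, so --- $y$ being generic on $A/I$ --- a drop can only be caused by an associated prime of positive height over $A/I$. I expect to exclude this for $n\gg 0$ by showing that the collection of associated primes of $H^{g_n}_{G_{I^n}(A)_+}(G_{I^n}(M))$, taken over all $n$, is finite --- a stabilization statement in the spirit of the Melkersson--Schenzel and Brodmann results already used in the paper --- which would allow one to choose a single $y$ that is a nonzerodivisor on all of these modules simultaneously and thereby forces $g_n=c$ eventually; alternatively, one may try to play the window estimate of the second paragraph against its analogue over $\bar A$ (where, since $G_{\bar I^n}(\bar M)\cong G_{I^n}(M)/y^{*}G_{I^n}(M)$ with $y^{*}$ a nonzerodivisor, the stable depth drops by exactly $1$) so as to squeeze $g_n$ onto a single value. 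This is where the bulk of the technical work will lie.
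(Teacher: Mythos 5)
Your proof does not close. You yourself flag the ``crux'' step --- promoting $g_n\in\{c,c+1\}$ to an equality --- as unresolved, and the two workarounds you sketch (a uniform finiteness statement for the associated primes of $H^{g_n}_{G_{I^n}(A)_+}(G_{I^n}(M))$ over all $n$, or a squeeze between depth windows on $A$ and $\bar A$) are not carried out; neither is a routine matter, since nothing proved in the paper or standard in this circle of ideas gives such a stabilization of associated primes of top local cohomology of the blow-up modules. There is also a gap one step earlier: you choose $y$ once, using associated-prime data from $n=1$ only, and then assert that $y^{*}$ is a nonzerodivisor on $G_{I^n}(M)$ for all $n\gg 0$. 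The relevant obstructions are the contractions to $A$ of $\operatorname{Ass}(G_{I^n}(M))$, and these vary with $n$; you give no argument that they lie in a fixed finite set of non-maximal primes that $y$ can be chosen to avoid. Without that, the identification $G_{I^n}(M)/y^{*}G_{I^n}(M)\cong G_{\bar I^n}(\bar M)$ for all $n\gg0$, on which the whole induction rests, is not established.

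For comparison, the paper avoids the inductive passage to $A/yA$ altogether. It introduces the invariant $\xi_I(M)$, read off from negative-degree components of $H^i(L^I(M))$, and shows (Theorem~\ref{1p}) that $\operatorname{grade}(G_{I^n}(A)_+,G_{I^n}(M))\geqslant\xi_I(M)$ for every $n>\operatorname{amp}_I(M)$. To rule out the grade exceeding $\xi_I(M)$, it uses the first fundamental exact sequence together with the tameness of $H^{u}(G_I(M))$ --- a theorem of Brodmann, Fumasoli and Lim, valid precisely because $\dim(A/I)\leqslant 2$ --- to push vanishing from the Veronese degrees $nl-1$ to all $j\ll 0$, which contradicts the definition of $\xi_I(M)$. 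Thus the paper obtains the exact stable value $\xi_I(M)$ in one stroke, whereas your scheme reduces the problem to a dichotomy that it cannot resolve. Your window estimate $\delta-\dim(A/I)\leqslant g_n\leqslant\delta$ is correct and settles the $\mathfrak{m}$-primary case, but the inductive step needs a new ingredient playing the role that tameness plays in the paper.
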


Although he does not have an example, the third author feels that $ \xi_M^I $ may not be constant for $ n \gg 0 $ if $ \dim(A/I) \geqslant 3 $.

\begin{remark}
	In \cite[Theorem~3.4]{Sh99}, Huckaba and Marley proved that if $ A $ is Cohen-Macaulay of dimension $ \geqslant 2 $, and if $ I $ is a normal ideal with $ \operatorname{grade}(I) \geqslant 1 $, then $ \operatorname{depth}(G_{I^n}(A)) \geqslant 2 $ for all $ n \gg 0 $.  A crucial ingredient for the proofs of our results is to compute $\operatorname{grade}(G_{I^n}(A)_+,G_{I^n}(A))$ for all $n\gg0$ when $I$ is normal. So we prove the following result.
\end{remark}

\begin{theorem}[$ = $ \ref{RR}]\label{normal}
	Let $ A $ be an excellent Cohen-Macaulay local ring. Let $ I $ be a normal ideal of $A$ such that $ \operatorname{grade}(I) \geqslant 2 $. Then $ \operatorname{grade}(G_{I^n}(A)_+,G_{I^n}(A)) \geqslant 2 $ for all $ n \gg 0 $.
\end{theorem}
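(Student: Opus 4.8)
The plan is to reduce the statement to a family of local assertions, invoke the theorem of Huckaba and Marley \cite[Theorem~3.4]{Sh99} on each, and then arrange that the resulting threshold does not depend on the prime.

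First observe that $A$ is reduced, hence (being excellent) analytically unramified: the nilradical of a Noetherian ring lies in the integral closure of every ideal, so $\operatorname{nil}(A)\subseteq\bigcap_{k\geqslant1}\overline{I^{k}}=\bigcap_{k\geqslant1}I^{k}=0$. Next recall $\operatorname{grade}(\mathfrak a,N)=\min\{\operatorname{depth}N_{\mathfrak q}:\mathfrak q\in V(\mathfrak a)\}$, and apply it with $N=G_{I^{n}}(A)$, $\mathfrak a=G_{I^{n}}(A)_{+}$. Every prime of $G_{I^{n}}(A)$ containing $G_{I^{n}}(A)_{+}$ is homogeneous and is the contraction of a unique $\mathfrak p\in V(I)$ under $G_{I^{n}}(A)\twoheadrightarrow G_{I^{n}}(A)/G_{I^{n}}(A)_{+}=A/I^{n}$; localisation commutes with the associated-graded construction; and the depth of a standard graded ring over a Noetherian local ring, taken at its maximal homogeneous ideal, equals the depth of the corresponding localisation. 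Hence
\[
\operatorname{grade}\bigl(G_{I^{n}}(A)_{+},\,G_{I^{n}}(A)\bigr)=\min_{\mathfrak p\in V(I)}\operatorname{depth}G_{I^{n}A_{\mathfrak p}}(A_{\mathfrak p}),
\]
the right-hand depths being taken at maximal homogeneous ideals. So it is enough to find $n_{0}$ with $\operatorname{depth}G_{I^{n}A_{\mathfrak p}}(A_{\mathfrak p})\geqslant2$ for all $n\geqslant n_{0}$ and all $\mathfrak p\in V(I)$.

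Fix $\mathfrak p\in V(I)$. Then $A_{\mathfrak p}$ is excellent Cohen--Macaulay with $\dim A_{\mathfrak p}=\operatorname{ht}(\mathfrak p)\geqslant\operatorname{ht}(I)=\operatorname{grade}(I)\geqslant2$, and $IA_{\mathfrak p}$ is normal since integral closure of ideals commutes with localisation. This is the one place the hypothesis $\operatorname{grade}(I)\geqslant2$ is used: it forces $\dim A_{\mathfrak p}\geqslant2$, making \cite[Theorem~3.4]{Sh99} applicable, and it excludes the height-one primes over $I$ at which $\operatorname{depth}G_{I^{n}A_{\mathfrak p}}(A_{\mathfrak p})\leqslant1$ would be unavoidable. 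By \cite[Theorem~3.4]{Sh99}, $\operatorname{depth}G_{I^{n}A_{\mathfrak p}}(A_{\mathfrak p})\geqslant2$ for all large $n$; since $\dim G_{I^{n}A_{\mathfrak p}}(A_{\mathfrak p})=\operatorname{ht}(\mathfrak p)\geqslant2$, this says exactly that $G_{I^{n}A_{\mathfrak p}}(A_{\mathfrak p})$ satisfies Serre's condition $(S_{2})$. The threshold, however, a priori depends on $\mathfrak p$.

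Making it uniform is the main obstacle. For each fixed $n$, the set $Z_{n}:=\{\mathfrak p\in V(I):\operatorname{depth}G_{I^{n}A_{\mathfrak p}}(A_{\mathfrak p})\leqslant1\}$ is closed in $V(I)\cong V\bigl(G_{I^{n}}(A)_{+}\bigr)$, being the trace there of the non-$(S_{2})$ locus of the excellent ring $G_{I^{n}}(A)$; by the previous paragraph each $\mathfrak p$ lies outside $Z_{n}$ for $n\gg0$. To upgrade this pointwise statement to ``$Z_{n}=\emptyset$ for $n\gg0$'' I would combine: (a) $\operatorname{grade}(G_{I^{n}}(A)_{+},G_{I^{n}}(A))\geqslant1$ for \emph{every} $n$, which is immediate from normality --- each $I^{k}$ is integrally closed, hence Ratliff--Rush closed, so $H^{0}_{G_{+}}(G_{I^{n}}(A))=0$; (b) the dichotomy of Theorem~\ref{xi-1} (with $M=A$, using $\operatorname{grade}(I)>0$), applied to $A$ and to its localisations at the generic points of the finitely many irreducible components occurring among the $Z_{n}$; and (c) the monotonicity of $\operatorname{Ass}_{A}(A/I^{n})$ in $n$ for normal ideals (Ratliff), fed into a Noetherian descending-chain argument on $V(I)$. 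An alternative avoiding scheme-theoretic language is induction on $\dim A$: reducing harmlessly to infinite residue field, choose $x\in I$ with $x^{\ast}\in G_{I}(A)_{1}$ regular on $G_{I}(A)$; since $x^{\ast}$ regular gives $(I^{m}:x^{n})=I^{m-n}$ for $m\geqslant n$, the element $x^{n}+I^{2n}\in G_{I^{n}}(A)_{1}$ is regular on $G_{I^{n}}(A)$ for all $n$, and $G_{I^{n}}(A)/(x^{n}+I^{2n})\cong G_{J_{n}}(A/x^{n}A)$ with $J_{n}=(I^{n}+x^{n}A)/x^{n}A$; the crux is then that, for $n\gg0$, the pair $(A/x^{n}A,\,J_{n})$ again satisfies the hypotheses, i.e.\ $x$ is superficial for the integral-closure filtration. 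Either way, once the uniform bound is secured one concludes $\operatorname{grade}(G_{I^{n}}(A)_{+},G_{I^{n}}(A))\geqslant2$ for all $n\gg0$.
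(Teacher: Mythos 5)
Your overall plan is genuinely different from the paper's: you try to reduce to a family of local statements and invoke Huckaba--Marley at each $\mathfrak p \in V(I)$, whereas the paper never uses Huckaba--Marley in the proof itself (it only cites it for context); it instead works directly with the local cohomology of $L^I(A)$, uses the Veronese functor to obtain a single uniform threshold $u := \max\{b_0^I(A),b_1^I(A)\}+2$, and then invokes Hong--Ulrich's specialization-of-integral-closure theorem to kill $H^1$ via an integrally closed reduction modulo a superficial element.

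The decisive problem with your write-up is the one you yourself flag: after the (correct) identity
\[
\operatorname{grade}\bigl(G_{I^{n}}(A)_{+},\,G_{I^{n}}(A)\bigr)=\min_{\mathfrak p\in V(I)}\operatorname{depth}G_{I^{n}A_{\mathfrak p}}(A_{\mathfrak p}),
\]
Huckaba--Marley gives, for each fixed $\mathfrak p$, a threshold $n(\mathfrak p)$ beyond which the depth is at least $2$, but $V(I)$ is in general infinite, and you never prove that $\sup_{\mathfrak p}n(\mathfrak p)<\infty$. The paragraph that is supposed to close this gap only lists ingredients -- closedness of the non-$(S_2)$ locus, the dichotomy of Theorem~\ref{thm-xi-1}, Ratliff's monotonicity -- and says ``I would combine'' them, without an actual argument. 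Note in particular that closedness of $Z_n$ for each fixed $n$ does not rule out $Z_n\neq\emptyset$ for all $n$: the witnessing prime $\mathfrak p(n)$ could vary with $n$. Moreover, the dichotomy says that for $n>\operatorname{amp}_I(A)$ the grade is either $1$ for all such $n$ or at least $2$ for all such $n$; so you would still need to exhibit a single $l>\operatorname{amp}_I(A)$ with grade $\geq 2$, and that is exactly the content of the theorem -- Huckaba--Marley's conclusion about $\operatorname{depth}(G_{I^n}(A))$ at the maximal homogeneous ideal only bounds the grade from above, not below, since $\operatorname{grade}(G_+,G)\leqslant\operatorname{depth}_{\mathcal M}(G)$.

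Your alternative inductive sketch has the same missing step relocated: after dividing by $x^n$, you need the pair $(A/x^nA,\,J_n)$ to still satisfy the hypotheses, which is precisely the issue of whether a superficial element can be chosen so that the image ideal remains integrally closed. That is the role of Hong--Ulrich's \cite[Theorem~2.1]{HU14} in the paper's proof, and it is nontrivial (and is where excellence is actually used). Your sketch labels this ``the crux'' but offers no proof and does not cite Hong--Ulrich. If you pursue the inductive route you will in effect have to redo the paper's argument; if you pursue the local--global route via Huckaba--Marley you still need a genuine uniform-boundedness argument, which as written is absent. The initial reductions (analytic unramifiedness, the localization identity, and $H^0_{G_+}(G_{I^n}(A))=0$ from Ratliff--Rush closedness of normal ideals) are all correct and potentially useful, but the theorem is not proved.
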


We now describe in brief the contents of this paper. In Section~\ref{sec2}, we discuss a few preliminaries on grade and local cohomology that we need.  In Section~\ref{sec3}, we investigate the function $\xi_M^I(n) := \operatorname{grade}(G_{I^n}(A)_+, G_{I^n}(M))$ and prove Theorems~\ref{xi-1}, \ref{xi-2} and \ref{normal}. In Section~\ref{sec4}, we prove Theorems~\ref{first} and \ref{second}.  Finally, in Section~\ref{sec5}, we prove Theorems~\ref{third-alpha} and \ref{third-beta}.

\section{Preliminaries on grade and local cohomology}\label{sec2}

Throughout this article, all rings are assumed to be commutative Noetherian rings with identity. Throughout, let $(A,\mathfrak{m})$ be a local ring of dimension $d$ with infinite residue field, and $M$ be a finitely generated $A$-module. Let $I$ be an ideal of $A$ (which need not be $\mathfrak{m} $-primary). If $p \in M$ is non-zero, and $j$ is the largest integer such that $p\in {I}^{j} M$, then $p^{*}$ denotes the image of  $p$ in $I^{j} M/I^{j+1} M$, and let $0^*=0$. Set $\mathcal{R}(I) := \bigoplus_{n\geqslant0} I^nt^n$, the Rees ring, and $\hat{\mathcal{R}}(I) := \bigoplus_{n\in \mathbb{Z}} I^nt^n$ is the extended Rees ring  of $A$ with respect to $I$, where $ I^n = A $ for every $ n \leqslant 0 $. Set $\mathcal{R}(I,M) := \bigoplus_{n\geqslant 0} I^n M t^n$, the Rees module, and $\hat{\mathcal{R}}(I,M) := \bigoplus_{n\in \mathbb{Z}} I^nMt^n$ is the extended Rees Module of $M$ with respect to $I$. Let $G_{I}(A) := \bigoplus_{n\geqslant 0} I^{n} /I^{n+1} $ be the associated graded ring of $A$ with respect to $I$, and $G_{I}(M) := \bigoplus_{n\geqslant 0} I^{n} M/I^{n+1}M $ be the associated graded module of $M$ with respect to $I$. Throughout this article, we denote  the ideal $\bigoplus_{n\geqslant1} I^{n}t^n$ of $\mathcal{R}(I)$ by $R_+$, and the ideal $\bigoplus_{n\geqslant 1} I^{n} /I^{n+1}$ of $G_I(A)$ by $G_+$.
\s
Set  $L^I(M) := \bigoplus_{n\geqslant0}  M/I^{n+1}M$. The $A$-module $L^I(M)$ can be given an $\mathcal{R}(I)$-module structure as follows. The Rees ring $\mathcal{R}(I)$ is a subring of $\hat{\mathcal{R}}(I)$, and $\hat{\mathcal{R}}(I)$ is a subring of $ S := A[t,t^{-1}] $. So $S$ is an $\mathcal{R}(I)$-module. Therefore $M[t,t^{-1}]=\bigoplus_{n\in \mathbb{Z}}Mt^n=M\otimes_A S$ is an $\mathcal{R}(I)$-module. The exact sequence
\begin{equation} \label{1}
0 \longrightarrow \hat{\mathcal{R}}(I,M) \longrightarrow M[t,t^{-1}] \longrightarrow L^I(M)(-1) \longrightarrow 0
\end{equation}
defines an $ \mathcal{R}(I )$-module structure on $L^I(M)(-1)$, and hence on $L^I(M)$.

The following result is well-known and easy to prove.

\begin{lemma}\label{op}
	Let $ R=\bigoplus_{i\geqslant0}R_{i}$ be a graded ring. Let $E$ be a graded $R$-module  {\rm (}need not be finitely generated{\rm )}. Then the following statements hold true:
	\begin{enumerate}[{\rm (i)}]
		\item If $E_{n} = 0 $ for all $n \gg 0$, and there is an injective graded homomorphism $E(-1) \hookrightarrow E $, then $E = 0$.
		\item If $E_{n} = 0 $ for all $n \ll 0$, and there is an injective graded homomorphism $E \hookrightarrow E(-1) $, then $E = 0$.
	\end{enumerate}
\end{lemma}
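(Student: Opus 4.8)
\textbf{Proof proposal for Lemma~\ref{op}.}

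The plan is to treat the two parts dually and reduce each to a statement about homogeneous components. For part~(i), suppose $E_n = 0$ for all $n \gg 0$, say $E_n = 0$ for $n > N$, and let $\varphi\colon E(-1) \hookrightarrow E$ be an injective graded homomorphism. Since $\varphi$ is graded of degree $0$ as a map $E(-1)\to E$, in terms of components it gives an injection $E_{n-1} = (E(-1))_n \hookrightarrow E_n$ for every $n\in\mathbb{Z}$. I would argue by descending induction on $n$ that $E_n = 0$ for all $n$: the base case is $n > N$ by hypothesis, and if $E_n = 0$ then the injection $E_{n-1}\hookrightarrow E_n = 0$ forces $E_{n-1} = 0$. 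Hence every homogeneous component vanishes, so $E = 0$.

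For part~(ii), the argument is the mirror image: assume $E_n = 0$ for all $n \ll 0$, say for $n < N$, and let $\psi\colon E \hookrightarrow E(-1)$ be an injective graded homomorphism. Looking at degree-$n$ components, $\psi$ yields an injection $E_n \hookrightarrow (E(-1))_n = E_{n-1}$ for every $n$. Now I would use ascending induction: $E_n = 0$ for $n < N$ by hypothesis, and if $E_{n-1} = 0$ then $E_n \hookrightarrow E_{n-1} = 0$ gives $E_n = 0$. Thus all components vanish and $E = 0$.

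There is essentially no obstacle here; the only point requiring a moment's care is the bookkeeping of the degree shift, namely that a graded homomorphism $E(-1)\to E$ is the same data as a family of maps $E_{n-1}\to E_n$, so that the shift-injection transports the vanishing range one step at a time in the appropriate direction. Note that finite generation of $E$ is not used anywhere — only that the relevant tail (respectively, initial segment) of components is zero, which is exactly why the lemma is stated for arbitrary graded modules. This is the form in which the lemma will be applied later: one builds an injection of a shifted copy of a (possibly non-finitely-generated, e.g.\ $L^I(-)$-type) module into itself and invokes a one-sided vanishing of components to conclude the module is zero.
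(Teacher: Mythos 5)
Your argument is correct: the degree bookkeeping ($E(-1)_n = E_{n-1}$, so the injection shifts the vanishing range downward in (i) and upward in (ii)) is exactly the point, and the induction on components closes the proof. The paper omits the proof as ``well-known and easy,'' and what you wrote is precisely the standard argument intended.
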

\s
Let $ R = \bigoplus_{i \geqslant 0}R_{i} $ be a graded ring. Let $E$ be a graded $R$-module. Let $l$ be a positive integer. The $l$th Veronese subring of $R$ is defined by $R^{<l>} := \bigoplus_{n\geqslant0}R_{nl}$, and the $l$th Veronese submodule of $E$ is defined to be $E^{<l>} := \bigoplus_{n\in \mathbb{Z}}E_{nl}$. It can be observed that $E^{<l>}$ is a graded $R^{<l>}$-module.

\begin{remark}\label{rmkk}{~}
	\begin{enumerate}[{\rm (i)}]
		\item $L^I(M)(-1)$ behaves well with respect to the Veronese functor. It can be easily checked that
		$$L^I(M)(-1)^{<l>} = L^{I^l}(M)(-1).$$
		\item \cite[Proposition~2.5]{Sh99} Veronese functor commutes with local cohomology: Let $ J $ be a homogeneous ideal of $ R $. Then, for every $ i \geqslant 0 $, we have
		\[
		\left( H_J^i(E) \right)^{<l>} \cong H_{J^{<l>}}^i(E^{<l>}) \mbox{ as graded $ R^{<l>} $-modules}.
		\]
	\end{enumerate}
\end{remark}

Although, in general, $ L^I(M) $ is not finitely generated as an $ \mathcal{R}(I) $-module, but it has the following vanishing property.

\begin{lemma}\label{oo}
	Suppose $ \operatorname{grade}(I,M) = g > 0 $.  Then, for every $0 \leqslant i \leqslant g-1$, $ H^i_{R+}(L^I(M))_n = 0 $ for all $n \gg 0$.
\end{lemma}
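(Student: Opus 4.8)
The plan is to extract the vanishing from the defining exact sequence \eqref{1} by combining it with two easier vanishing statements, one for $M[t,t^{-1}]$ and one for the extended Rees module $\hat{\mathcal{R}}(I,M)$. First I would observe that $H^i_{R_+}\big(M[t,t^{-1}]\big) = 0$ for all $i < g$. Writing $I = (x_1,\dots,x_r)$, the ideal $R_+$ of $\mathcal{R}(I)$ is generated by the degree-one elements $x_1 t,\dots,x_r t$, so $H^\bullet_{R_+}\big(M[t,t^{-1}]\big)$ is computed by the \v{C}ech complex on $x_1 t,\dots,x_r t$; since $t$ is a unit in $S := A[t,t^{-1}]$, this \v{C}ech complex agrees up to a grading twist with the \v{C}ech complex on $x_1,\dots,x_r$ applied to $M\otimes_A S$. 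By flatness of $S$ over $A$ its cohomology is $H^i_I(M)\otimes_A S$, which vanishes for $i < g = \operatorname{grade}(I,M)$.

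Next I would show that $H^j_{R_+}\big(\hat{\mathcal{R}}(I,M)\big)_n = 0$ for all $n \gg 0$ and every $j \geqslant 0$. Consider the exact sequence of graded $\mathcal{R}(I)$-modules
\[
0 \longrightarrow \mathcal{R}(I,M) \longrightarrow \hat{\mathcal{R}}(I,M) \longrightarrow Q \longrightarrow 0, \qquad Q := \bigoplus_{n \leqslant -1} M t^n .
\]
Here $Q$ is concentrated in negative degrees and is $R_+$-torsion (a homogeneous element of degree $-k$ is annihilated by $R_+^{\,k}$), so $H^j_{R_+}(Q) = 0$ for $j \geqslant 1$ while $H^0_{R_+}(Q) = Q$ vanishes in non-negative degrees. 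On the other hand $\mathcal{R}(I,M)$ is a finitely generated graded module over the Noetherian graded ring $\mathcal{R}(I)$, which is generated in degree one over $A$; hence, by the standard finiteness of the $a$-invariants of finitely generated graded modules (equivalently, Serre vanishing on $\operatorname{Proj}\mathcal{R}(I)$), $H^j_{R_+}(\mathcal{R}(I,M))_n = 0$ for all $n \gg 0$ and all $j$. Running the long exact sequence of $H^\bullet_{R_+}(-)$ attached to the displayed sequence then yields the asserted vanishing for $\hat{\mathcal{R}}(I,M)$ in every cohomological degree.

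Finally, I would apply $H^\bullet_{R_+}(-)$ to \eqref{1} and, for each fixed $i$ with $0 \leqslant i \leqslant g-1$, look at the segment
\[
H^i_{R_+}\big(M[t,t^{-1}]\big) \longrightarrow H^i_{R_+}\big(L^I(M)(-1)\big) \longrightarrow H^{i+1}_{R_+}\big(\hat{\mathcal{R}}(I,M)\big).
\]
The left-hand term vanishes because $i < g$, so $H^i_{R_+}(L^I(M)(-1))$ embeds into $H^{i+1}_{R_+}(\hat{\mathcal{R}}(I,M))$; the latter vanishes in all large degrees by the previous step, hence so does $H^i_{R_+}(L^I(M)(-1)) = H^i_{R_+}(L^I(M))(-1)$. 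This is precisely the statement that $H^i_{R_+}(L^I(M))_n = 0$ for all $n \gg 0$.

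The one step that is not formal manipulation with long exact sequences — and thus the main obstacle — is the vanishing $H^j_{R_+}(\mathcal{R}(I,M))_n = 0$ for $n \gg 0$: this is where finite generation of the Rees module over $\mathcal{R}(I)$ is essential, and it rests on the graded local duality / Serre vanishing package rather than on anything proved earlier in the paper. Everything else reduces to the elementary facts that $Q$ is $R_+$-torsion and concentrated in negative degrees, and that $R_+$ and $IS$ generate the same ideal of $A[t,t^{-1}]$.
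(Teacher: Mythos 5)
Your proof is correct and follows essentially the same route as the paper: apply local cohomology to the defining sequence \eqref{1}, kill the left-hand term in low degrees because $\operatorname{grade}(I,M)=g$, and then reduce the vanishing for $\hat{\mathcal{R}}(I,M)$ to the standard Serre-type vanishing for the finitely generated Rees module $\mathcal{R}(I,M)$ via the auxiliary sequence with $Q=\bigoplus_{n<0}Mt^n$. The only cosmetic difference is in the first step, where the paper observes directly that $x_1t,\dots,x_gt$ is an $M[t,t^{-1}]$-regular sequence, whereas you invoke the \v{C}ech-complex/flat-base-change identification $H^i_{R_+}(M[t,t^{-1}])\cong H^i_I(M)\otimes_A A[t,t^{-1}]$; both give the same vanishing.
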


\begin{proof}
	Since $\operatorname{grade}(I,M)=g > 0$, there exists an $M$-regular sequence $x_1,\ldots,x_g$ in $I$. It can be observed that $x_1t,\ldots,x_gt$ $\in \mathcal{R}(I)_1$ becomes an $M[t,t^{-1}]$-regular sequence. So $H^i_{R+}(M[t,t^{-1}])=0$ for $0\leqslant i \leqslant g - 1$. Therefore, in view of the short exact sequence \eqref{1}  and using the corresponding long exact sequence in local cohomology, we get that
	\begin{align}
	H^i_{R+}(L^I(M)(-1)) & \cong H^{i+1}_{R+}(\hat{\mathcal{R}}(I,M)) ~ \mbox{ for $0\leqslant i\leqslant g-2$, and}\label{ik1}\\
	H^{g-1}_{R+}(L^I(M)(-1)) & \subseteq H^g_{R+}(\hat{\mathcal{R}}(I,M)).\label{ik2}
	\end{align}
	Set $U := \bigoplus_{n<0} Mt^n$. Since $U$ is $R_+$ torsion, we have $H^0_{R+}(U)=U$ and $H^i_{R_+}(U)=0$ for all $i \geqslant 1$. Considering the short exact sequence of $\mathcal{R}(I)$-modules
	\begin{equation*}
	0 \longrightarrow \mathcal{R}(I,M)\longrightarrow \hat{\mathcal{R}}(I,M) \longrightarrow \bigoplus_{n<0} Mt^n \longrightarrow 0,
	\end{equation*}
	the corresponding long exact sequence in local cohomology yields the exact sequence
	\begin{align}
	& 0\longrightarrow U\longrightarrow H^1_{R_+}(\mathcal{R}(I,M)) \longrightarrow H^1_{R+}(\hat{\mathcal{R}}(I,M)) \longrightarrow 0, \label{dg1}\\
	& \mbox{and } H^i_{R_+}(\mathcal{R}(I,M)) \cong H^i_{R_+}(\hat{\mathcal{R}}(I,M)) \mbox{ for } i \geqslant 2. \label{dg2}
	\end{align}
	It is well-known that for each $i \geqslant 0$, $H^i_{R_+}(\mathcal{R}(I,M))_n=0$ for all $n\gg0$. Therefore, in view of \eqref{dg1} and \eqref{dg2}, for each $i \geqslant 0$, $H^i_{R_+}(\hat{\mathcal{R}}(I,M))_n = 0$ for all $n \gg 0$. Hence the lemma follows from \eqref{ik1} and \eqref{ik2}.
\end{proof}
\s
The {\it Ratliff-Rush closure} of $ M $ with respect to $ I $ is defined to be
\[
\widetilde{I M} := \bigcup_{m \geqslant 1} (I^{m+1}M :_M I^m).
\]
It is shown in \cite[Proposition~2.2.(iv)]{Rez3} that if $ \operatorname{grade}(I,M) > 0 $, then $\widetilde{I^{n}M}=I^nM$ for all $ n \gg 0 $. This motivates the following definition:
\[
\rho^I(M) := \min\{n : \widetilde{I^{i}M}=I^iM ~\mbox{for all} ~i\geqslant n \}.
\]
We call $\rho^I(M)$ the {\it Ratliff-Rush number} of $M$ with respect to $I$.

\s\label{mi}
Let $I = (x_1,\ldots, x_m)$. Set $S := A[X_1,\ldots, X_m]$ with deg$A=0$ and deg$X_i=1$ for $i=1,\ldots ,m$. Then $S = \bigoplus_{n\geqslant 0}S_n$, where $S_n$ is the collection of all homogeneous polynomials of degree $n$. So $A = S_0$. We denote the ideal $\bigoplus_{n\geqslant 1}S_n$ of $S$ by $S_+$. We have a surjective homogeneous homomorphism of $A$-algebras, namely $\varphi: S\rightarrow \mathcal{R}(I)$, where $ \varphi(X_i) = x_i t $. We also have the natural map  $\psi : \mathcal{R}(I ) \to G_I(A)$. Note that
\[
\varphi(S_+)=R_+, \quad \psi(R_+) = G_+ \quad \mbox{and} \quad \psi \circ \varphi(S_+)=G_+.
\]
By graded independence theorem (\cite[13.1.6]{BS60}), it does not matter which ring we use to compute local cohomology. So now onwards, we simply use $H^i(-)$ instead of $H^i_{R_+}(-)$ or $H^i_{G_+}(-)$.
\s
The natural map $ 0 \rightarrow {I^nM}/{I^{n+1}M} \rightarrow M/{I^{n+1}M} \rightarrow M/{I^{n}M} \rightarrow 0 $ induces the first fundamental exact sequence (as in \cite[(5)]{tp07}) of $\mathcal{R}(I)$-modules:
\begin{equation}\label{1st}
0 \longrightarrow G_I(M) \longrightarrow L^I(M) \longrightarrow L^I(M)(-1) \longrightarrow 0.
\end{equation}

\s
Let $x$ be an $M$-superficial element with respect to $I$. Set $N=M/{xM}$. For every $ n \geqslant 1 $, we have an exact sequence of $A$-modules:
\[
0 \longrightarrow \dfrac{(I^{n+1}M :_M x)}{I^nM} \longrightarrow \dfrac{M}{I^nM} \stackrel{\psi_n}{\longrightarrow} \dfrac{M}{I^{n+1}M} \longrightarrow \dfrac{N}{I^{n+1}N} \longrightarrow 0,
\]
where $\psi_n(m+I^nM)=xm+I^{n+1}M$ for $ m \in M $. These sequences induce the second fundamental exact sequence (as in \cite[6.2]{tp07})  of $ \mathcal{R}(I) $-modules:
\begin{equation}\label{2nd}
0\longrightarrow B^{I}(x,M) \longrightarrow L^I(M)(-1) \stackrel{\Psi_{xt}}{\longrightarrow} L^I(M) \stackrel{\rho}{\longrightarrow} L^{I}(N) \longrightarrow 0,
\end{equation}
where $\Psi_{xt}$ is multiplication by $ xt \in \mathcal{R}(I)_1 $, and
\[ B^{I}(x,M) := \bigoplus_{n \geqslant 0}(I^{n+1}M:_{M}x)/{I^{n}M}. \]

\s\label{pri}
It is shown in \cite[Proposition~4.7]{tp07} that if $\operatorname{grade}(I, M) > 0$, then
\[
H^0_{R+}(L^I(M)) \cong \bigoplus^{\rho^{I}(M)-1}_{i=0}~ \dfrac{\widetilde{I^{i+1}M}}{I^{i+1}M}.
\]
\s\label{mod-reg-tony}
Let $ x \in I \smallsetminus I^2 $. If $ x^* $ is $G_I(M)$-regular, then $G_I(M)/x^* G_I(M) \cong G_I(M/xM)$ (the proof in \cite[Theorem 7]{hilbert} generalizes in this context).

We now show that $ \operatorname{grade}(G_+, G_I(M)) $ is always bounded by $ \operatorname{grade}(I,M) $.

\begin{lemma}\label{hilbSyz}
	We have that $ \operatorname{grade}(G_+, G_I(M)) \leqslant \operatorname{grade}(I,M) $.
\end{lemma}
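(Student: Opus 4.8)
\textbf{Proof proposal for Lemma~\ref{hilbSyz}.}
The plan is to induct on $h := \operatorname{grade}(G_+, G_I(M))$. The case $h = 0$ is vacuous, since grades are non-negative, and the degenerate situations $I = A$ or $M = 0$ are trivial; so I may assume $I$ is a proper ideal and $M \neq 0$, in which case Krull's intersection theorem gives $\bigcap_{n \geqslant 0} I^n M = 0$, a fact I will use below.

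For the inductive step, assume $h \geqslant 1$. The first move is to manufacture a degree-one non-zerodivisor on $G_I(M)$. Since $I$ is finitely generated, $G_I(A)$ is standard graded over $A/I$, i.e.\ generated by $G_I(A)_1 = I/I^2$, so $G_+$ is generated in degree one; consequently, for any prime $Q$ of $G_I(A)$ one has $G_+ \not\subseteq Q$ if and only if $I/I^2 \not\subseteq Q$. Because $h \geqslant 1$, no $Q \in \operatorname{Ass}_{G_I(A)}(G_I(M))$ contains $G_+$, hence none contains $I/I^2$; as this set of associated primes is finite and the residue field of $A/I$ is infinite, prime avoidance applied to the finitely generated $A/I$-module $I/I^2$ produces an element $x \in I \smallsetminus I^2$ whose initial form $x^{*} \in G_I(A)_1$ lies in none of them, i.e.\ $x^{*}$ is $G_I(M)$-regular.

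Next I would check that such an $x$ is automatically $M$-regular: if $x m = 0$ with $m \neq 0$, let $j$ be the largest integer with $m \in I^j M$ (finite since $\bigcap_n I^n M = 0$), so that $0 \neq m^{*} \in G_I(M)_j$; but then $x^{*} m^{*}$ is the class of $xm = 0$ in $I^{j+1}M/I^{j+2}M$, hence $x^{*} m^{*} = 0$, contradicting regularity of $x^{*}$. Since moreover $x \in I$, this yields $\operatorname{grade}(I, M/xM) = \operatorname{grade}(I, M) - 1$. Finally, invoking the isomorphism $G_I(M)/x^{*} G_I(M) \cong G_I(M/xM)$ of \ref{mod-reg-tony} (valid precisely because $x^{*}$ is $G_I(M)$-regular), I get $\operatorname{grade}(G_+, G_I(M/xM)) = h - 1$; the induction hypothesis for $M/xM$ then gives $h - 1 \leqslant \operatorname{grade}(I, M/xM) = \operatorname{grade}(I, M) - 1$, hence $h \leqslant \operatorname{grade}(I, M)$, closing the induction.

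The only step that is not pure formalism is the extraction of the degree-one regular element $x$: it hinges on $G_I(A)$ being standard graded over $A/I$ (so that $G_+$ is generated in degree one) together with prime avoidance over the infinite residue field — the same mechanism that produces superficial elements. Everything else, namely the compatibility $G_I(M)/x^{*}G_I(M) \cong G_I(M/xM)$, the behaviour of grade modulo an $M$-regular element of $I$, and the induction itself, is routine.
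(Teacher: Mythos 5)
Your proof is correct and takes essentially the same approach as the paper's: extract a degree-one $G_I(M)$-regular element $x^*$, observe that $x$ is then automatically $M$-regular, and reduce modulo $x$ via $G_I(M)/x^*G_I(M)\cong G_I(M/xM)$. The only cosmetic differences are that you induct on $\operatorname{grade}(G_+,G_I(M))$ and argue directly whereas the paper inducts on $\operatorname{grade}(I,M)$ and argues by contradiction, and you make explicit the prime-avoidance step (using the infinite residue field and the fact that $G_+$ is generated in degree one) that the paper leaves implicit.
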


\begin{proof}
	We prove the result by induction on $ g := \operatorname{grade}(I,M) $. Let us first consider the case $ g = 0 $. If possible, suppose $ \operatorname{grade}(G_+, G_I(M)) \geqslant 1 $. Then there is a $ G_I(M) $-regular element $ u = x+I^2 \in G_1 $ for some $ x \in I $. Since $ \operatorname{grade}(I,M) = 0 $, $x$ cannot be $M$-regular, i.e., there exists $ a \neq 0 $ in $ M $ such that  $ xa = 0 $. By Krull's Intersection Theorem, there exists $ c \geqslant 0 $ such that $ a \in  I^{c} M  \smallsetminus I^{c+1}M$. Then $ a^* \neq 0 $ in $ I^{c} M / I^{c+1}M $, but $ u a^* = xa + I^{c+2}M = 0 $ yields that $ a^* = 0 $, which is a contradiction. Therefore $ \operatorname{grade}(G_+, G_I(M)) = 0 $.
	
	We assume the result for $ g = l - 1 $, and prove it for $ g = l $ $ (\geqslant 1) $. If possible, suppose that the result is not true for $ g = l $, i.e,  $ \operatorname{grade}(I,M) = l $ and $ \operatorname{grade}(G_+, G_I(M)) \geqslant l+1 $. Then there exists a $G_I(M)$-regular sequence $ u_1, \ldots, u_{l+1} \in G_1 $, where $ u_i = x_i + I^2 $ for some $ x_i \in I $, $ 1 \leqslant i \leqslant l+1 $. By applying a similar procedure as above, one obtains that $ x_1 $ is $ M $-regular. We note that $\operatorname{grade}(I,M/{x_1M})=l-1$, but $u_2,\ldots,u_{l+1}$ is regular on $ {G_I(M)}/{x_1^* G_I(M)} \cong G_I(M/{x_1M})$; see \ref{mod-reg-tony}. This contradicts our induction hypothesis.
\end{proof}

The result below gives a relationship between the first few local cohomologies of $ L^I(M) $ and that of $ G_I(M) $.

\begin{theorem}\label{jc}
	Suppose $\operatorname{grade}(I,M) = g > 0$. Then, for $s \leqslant g-1$, we have $H^{i}(L^I(M)) =0$ for all $0 \leqslant i \leqslant s$ if and only if $H^{i}(G_I(M)) =0$  for all $0 \leqslant i \leqslant s$.
\end{theorem}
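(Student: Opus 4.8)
The plan is to run both fundamental exact sequences \eqref{1st} and \eqref{2nd} through local cohomology and to close up with the vanishing criterion of Lemma~\ref{op} and the degreewise vanishing of Lemma~\ref{oo}. As in \ref{mi} I abbreviate $H^i(-) := H^i_{R_+}(-)$. One implication is essentially free: if $H^i(L^I(M))=0$ for $0\le i\le s$, then $H^i(L^I(M)(-1))=0$ in the same range (it is just a degree shift), so the long exact sequence of \eqref{1st} squeezes each $H^i(G_I(M))$ with $i\le s$ between $H^{i-1}(L^I(M)(-1))$ and $H^i(L^I(M))$, both of which vanish. For the converse I would induct on $s$. The base case $s=0$ is also read off from \eqref{1st}: $H^0(G_I(M))=0$ gives an injection $H^0(L^I(M))\hookrightarrow H^0(L^I(M))(-1)$, and since $H^0(L^I(M))$ is the $R_+$-torsion submodule of the non-negatively graded module $L^I(M)$ (explicitly by \ref{pri}) it is bounded below, so Lemma~\ref{op}(ii) forces $H^0(L^I(M))=0$.

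For the inductive step, fix $1\le s\le g-1$ and assume $H^i(G_I(M))=0$ for $0\le i\le s$. By the inductive hypothesis applied to $M$ I already obtain $H^i(L^I(M))=0$ for $0\le i\le s-1$, so everything reduces to proving $H^s(L^I(M))=0$. Here I would choose an $M$-superficial element $x\in I$ (possible since the residue field is infinite); as $g\ge 1$, $x$ is $M$-regular, and $N:=M/xM$ has $\operatorname{grade}(I,N)=g-1\ge s$. Because $H^0(G_I(M))=0$, superficiality of $x$ makes $x^{*}\in G_1$ a $G_I(M)$-nonzerodivisor (its annihilator in $G_I(M)$ is bounded in degree, hence $G_+$-torsion, hence zero), so $G_I(M)/x^{*}G_I(M)\cong G_I(N)$ by \ref{mod-reg-tony}. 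Feeding $0\to G_I(M)(-1)\xrightarrow{x^{*}}G_I(M)\to G_I(N)\to 0$ into local cohomology yields $H^i(G_I(N))=0$ for $0\le i\le s-1$, and since $s-1\le\operatorname{grade}(I,N)-1$ the inductive hypothesis applies to $N$ and gives $H^i(L^I(N))=0$ for $0\le i\le s-1$.

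Now I would bring in \eqref{2nd}, split as $0\to B^I(x,M)\to L^I(M)(-1)\to Q\to 0$ and $0\to Q\to L^I(M)\to L^I(N)\to 0$ with $Q=\operatorname{im}(\Psi_{xt})$. Since $x$ is superficial and $\operatorname{grade}(I,M)>0$, the module $B^I(x,M)$ vanishes in all high degrees (a routine Artin--Rees argument), hence is $R_+$-torsion, so $H^j(B^I(x,M))=0$ for every $j\ge 1$. Consequently the first sequence gives $H^s(Q)\cong H^s(L^I(M))(-1)$ (using $s\ge 1$), and the second, together with $H^{s-1}(L^I(N))=0$, gives an injection $H^s(Q)\hookrightarrow H^s(L^I(M))$. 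Composing produces an injective graded homomorphism $H^s(L^I(M))(-1)\hookrightarrow H^s(L^I(M))$. Since $s\le g-1$, Lemma~\ref{oo} gives $H^s(L^I(M))_n=0$ for $n\gg 0$, so Lemma~\ref{op}(i) forces $H^s(L^I(M))=0$, completing the induction.

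I expect the only delicate point to be the top degree $i=s$ of the converse. The sequence \eqref{1st} on its own only yields $H^s(L^I(M))\hookrightarrow H^s(L^I(M))(-1)$ — the wrong direction for Lemma~\ref{op}(i) — and one cannot in general fall back on Lemma~\ref{op}(ii), so a separate argument is unavoidable; the point of passing to $N=M/xM$ via \eqref{2nd} is precisely that it reverses the grading shift and supplies the injection in the form Lemma~\ref{op}(i) can use. The two auxiliary facts that must be verified with a little care are that $H^0(G_I(M))=0$ genuinely makes $x^{*}$ a non-zerodivisor on $G_I(M)$, and that $B^I(x,M)$ is $R_+$-torsion; both are standard consequences of the theory of superficial elements in positive grade.
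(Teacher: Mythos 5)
Your proposal is correct and follows essentially the same route as the paper: the forward implication and base case come from \eqref{1st}, and the inductive step passes to $N=M/xM$ via a $G_I(M)$-regular degree-one element, uses \eqref{2nd} to reverse the degree shift, and finishes with Lemma~\ref{oo} plus Lemma~\ref{op}(i). The only cosmetic differences are that the paper takes $x^*$ $G_I(M)$-regular directly from $\operatorname{grade}(G_+,G_I(M))\geq l+1$ so that $B^I(x,M)=0$ on the nose (making the two-step splitting with $Q$ unnecessary), and it derives the injection $H^i(L^I(M))(-1)\hookrightarrow H^i(L^I(M))$ for all $0\leq i\leq l$ in one go rather than isolating $i=s$ via the inductive hypothesis applied to $M$ itself.
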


\begin{proof}
	In view of the short exact sequence \eqref{1st} and the corresponding long exact sequence in local cohomology, it follows that if $H^i(L^I(M)) =0$  for $i=0,\ldots,s$, then $H^{i}(G_I(M)) =0$ for $i=0,\ldots,s$.	We now prove the converse part by using induction on $s$.	For $ s = 0 $, let us assume that $H^{0}(G_I(M)) =0$. Then \eqref{1st} yields an injective graded homomorphism $ H^0(L^I(M)) \hookrightarrow H^0(L^I(M))(-1) $. Hence, in view of Lemma~\ref{op}.(ii), we obtain that $ H^0(L^I(M)) = 0 $.
	
	We now assume the result for $ s = l - 1 $, and prove it for $ s = l $, where $ l \geqslant 1 $. Let $H^{i}(G_I(M))=0$ for $0 \leqslant i \leqslant l$. So $ \operatorname{grade}(G_+,G_I(M)) \geqslant l + 1 $. Then there is $ x \in I \smallsetminus I^2 $ such that $x^*$ is $G_I(M)$-regular. Hence it can be easily shown that $ (I^{n+1}M :_M x) = I^n M $ for all $ n \geqslant 0 $. In particular, we have $ B^I(x,M) = 0 $ and $ x $ is $M$-superficial. Set $ N := M/{xM} $. Note that ${G_I(M)}/{x^*G_I(M)} \cong G_I(N)$ (see \ref{mod-reg-tony}). So $ \operatorname{grade}(G_+,G_I(N)) \geqslant l $, and hence $H^{i}(G_I(N))=0$ for $0\leqslant i\leqslant l-1$. Therefore, by induction hypothesis, we have $H^{j}(L^I(N))=0$ for $0\leqslant j\leqslant l-1$. Since $B^I(x,M)=0$, the short exact sequence \eqref{2nd} and the corresponding long exact sequence in local cohomology provide us the exact sequences:
	\begin{equation}\label{uvw}
		0 \longrightarrow H^{i}(L^I(M))(-1) \longrightarrow H^{i}(L^I(M)) \quad \mbox{for } 0 \leqslant i \leqslant l.
	\end{equation}
	In view of Lemma~\ref{hilbSyz}, $ \operatorname{grade}(I,M) \geqslant \operatorname{grade}(G_+, G_I(M)) \geqslant l + 1 $. Hence, by Lemma~\ref{oo}, for every $ 0 \leqslant i \leqslant l $, $ H^i(L^I(M))_n = 0 $ for all $ n \gg 0 $. Therefore it follows from \eqref{uvw} and Lemma~\ref{op}(i) that $H^{i}(L^I(M))=0$ for all $0 \leqslant i \leqslant l$.
\end{proof}

As a consequence of Theorem~\ref{jc}, we obtain the following characterization of $ \operatorname{grade}(G_+, G_I(M)) $ in terms of local cohomology of $ L^I(M) $.

\begin{corollary}\label{count-sup}
	Suppose $ \operatorname{grade}(I,M) = g > 0 $. Then
	\[
	\operatorname{grade}(G_+, G_I(M)) = \min\{ i : H^i(L^I(M)) \neq 0, \mbox{ where }0 \leqslant i \leqslant g \}.
	\]
\end{corollary}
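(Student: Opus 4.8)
\emph{Plan.} This is essentially an immediate consequence of Theorem~\ref{jc} once it is combined with the standard local-cohomology description of grade; the only genuine extra input is a single use of the long exact sequence attached to the first fundamental exact sequence~\eqref{1st}, needed to turn a vanishing statement into a non-vanishing one.

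\emph{Setting up.} First I would put $r := \operatorname{grade}(G_+, G_I(M))$. By Lemma~\ref{hilbSyz} we have $r \leqslant g < \infty$. Since $\operatorname{grade}(I,M) = g > 0$ forces $M \neq 0$, hence $G_I(M) \neq 0$ (its degree-zero component is $M/IM \neq 0$) and $G_+ G_I(M) \neq G_I(M)$ by graded Nakayama; so for the finitely generated graded $G_I(A)$-module $G_I(M)$ the usual characterization of grade applies (computing local cohomology with respect to $G_+$ over $G_I(A)$ as in~\ref{mi}), giving $H^i(G_I(M)) = 0$ for all $0 \leqslant i < r$ and $H^r(G_I(M)) \neq 0$. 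Because $r - 1 \leqslant g - 1$, Theorem~\ref{jc} applies with $s = r-1$: from $H^i(G_I(M)) = 0$ for $0 \leqslant i \leqslant r-1$ it yields $H^i(L^I(M)) = 0$ for all $0 \leqslant i \leqslant r-1$ (vacuous when $r = 0$).

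\emph{Non-vanishing at $r$.} It remains to see that $H^r(L^I(M)) \neq 0$, and this is the one point not formally contained in Theorem~\ref{jc}, which only delivers vanishing. Here I would read off from the long exact sequence in local cohomology induced by~\eqref{1st} the segment
\[
H^{r-1}\big(L^I(M)(-1)\big) \longrightarrow H^r(G_I(M)) \longrightarrow H^r(L^I(M)).
\]
Since $H^{r-1}(L^I(M)(-1)) = H^{r-1}(L^I(M))(-1)$ vanishes by the previous step (and the left-hand term is simply $H^{-1} = 0$ when $r = 0$), the map $H^r(G_I(M)) \to H^r(L^I(M))$ is injective; as $H^r(G_I(M)) \neq 0$, we conclude $H^r(L^I(M)) \neq 0$. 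Combining the two steps, $r$ is exactly the least $i$ with $H^i(L^I(M)) \neq 0$, and since $r \leqslant g$ this index lies in the range $0 \leqslant i \leqslant g$, which is precisely the claimed formula.

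\emph{Main obstacle.} There is no serious difficulty: the only subtlety is that Theorem~\ref{jc} is a two-sided vanishing criterion, so one must be careful to extract actual non-vanishing of $H^r(L^I(M))$ (and not merely of $H^i(L^I(M))$ for some $i \geqslant r$); the comparison map from~\eqref{1st} is what pins the index down. One should also carry the degenerate case $r = 0$ along throughout, but it causes no real trouble.
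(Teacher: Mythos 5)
Your proof is correct and follows essentially the same route as the paper's: bound the grade by $g$ via Lemma~\ref{hilbSyz}, use Theorem~\ref{jc} to transfer vanishing of $H^i(G_I(M))$ for $i<r$ to vanishing of $H^i(L^I(M))$, and use the long exact sequence from~\eqref{1st} to get the injection $H^r(G_I(M))\hookrightarrow H^r(L^I(M))$ and hence non-vanishing at $r$. The only cosmetic difference is the order: the paper first uses~\eqref{1st} to show the minimum $\alpha$ is well-defined and $\alpha\leqslant\operatorname{grade}(G_+,G_I(M))$, then invokes Theorem~\ref{jc}, whereas you establish the two halves of the equality in the opposite order; the content is identical.
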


\begin{proof}
	It is well-known that
	\begin{equation*}
	\operatorname{grade}(G_+, G_I(M)) = \min\{ i : H_{G_+}^{i}(G_I(M)) \neq 0 \}.
	\end{equation*}
	By Lemma~\ref{hilbSyz}, we have $ \operatorname{grade}(G_+, G_I(M)) \leqslant g $. Set
	\begin{equation*}
	\alpha := \min\{ i : H^i(L^I(M)) \neq 0, \mbox{ where }0 \leqslant i \leqslant g \}.
	\end{equation*}
	By considering \eqref{1st}, it can be easily observed that $ H^i(L^I(M)) \neq 0 $ for some $ i $ with $ 0 \leqslant i \leqslant \operatorname{grade}(G_+, G_I(M)) $ $ (\leqslant g) $. So $ \alpha \leqslant \operatorname{grade}(G_+, G_I(M)) $. Hence, by virtue of Theorem~\ref{jc}, it follows that $ \alpha = \operatorname{grade}(G_+, G_I(M)) $.
\end{proof}

\section{Asymptotic grade for associated graded modules}\label{sec3}

In the present section, we explore the asymptotic behaviour of the associated graded modules for powers of an ideal. We particularly study its grade with respect to the irrelevant ideals of associated graded rings.

Throughout this section, we work with the following hypothesis, but we do not need Cohen-Macaulay assumption everywhere.

\begin{hypothesis}\label{hyp-sec-3}
	Let $(A,\mathfrak{m})$ be a Cohen-Macaulay local ring with infinite residue field, and $ M $ be a Cohen-Macaulay $ A $-module. Let $ I $ be an ideal of $ A $ such that $ \operatorname{grade}(I,M) = g  > 0 $.
\end{hypothesis}

\s[\bf A few invariants]\label{invariants}
In our study, we use the following invariants.
\begin{enumerate}[{\rm (i)}]
	\item
	$ \xi_I(M) := \min \{ g, i : H^i(L^I(M))_{-1} \neq 0, \mbox{ or } H^i(L^I(M))_j \neq 0 \mbox{ for infinitely}\\ \mbox{many } j < 0, \mbox{ where $ i $ varies in } 0 \leqslant i \leqslant g-1 \}$.
	Note that $ 1 \leqslant \xi_I(M) \leqslant g $.
	\item
	The {\it amplitude} of $ M $ with respect to $ I $ is defined to be
	\[
	\operatorname{amp}_{I}(M) := \max \{ |n| : H^i(L^I(M))_{n-1} \neq 0 \mbox{ for some } 0 \leqslant i \leqslant \xi_I(M)-1 \}.
	\]
	It follows from (i) and Lemma~\ref{oo} that $ \operatorname{amp}_{I}(M) < \infty $.
	\item 
	Let $N$ be a graded module {\rm (}not necessarily finitely generated{\rm )}. Define
	\[
	\operatorname{end}(N) := \sup\{ n \in \mathbb{Z} : N_n \neq 0 \}.
	\]
	\item
	By Lemma~\ref{oo}, for every $ 0 \leqslant i \leqslant g - 1 $, $ H^i_{R+}(L^I(M))_n = 0 $ for all $ n \gg 0 $. So we set
	\[
	b^I_i(M) := \operatorname{end}\left(H^i_{R+}(L^I(M))\right) \mbox{ for every } 0 \leqslant i \leqslant g - 1.
	\]
\end{enumerate}

We start by showing a special property of the first local cohomology of $ L^I(M) $.

\begin{lemma}\label{crucial}
	For a fixed integer $ c < 0 $, the following conditions are equivalent:
	\begin{enumerate}[\rm (i)]
		\item
		$ H^1(L^I(M))_c = 0 $.
		\item
		$ H^1(L^I(M))_j = 0 $ for all $ j \leqslant c $.		
	\end{enumerate}
\end{lemma}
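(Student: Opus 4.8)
The implication (ii) $\Rightarrow$ (i) is trivial, so the content is (i) $\Rightarrow$ (ii). The plan is to exploit the graded module structure of $L^I(M)$ over $\mathcal{R}(I)$ together with the first fundamental exact sequence~\eqref{1st}. The key observation is that the surjection $L^I(M) \twoheadrightarrow L^I(M)(-1)$ in~\eqref{1st} induces, in local cohomology, a graded map $H^1(L^I(M)) \to H^1(L^I(M))(-1)$, and since $G_I(M)$ is concentrated in non-negative degrees, its local cohomology $H^0(G_I(M))$ and $H^1(G_I(M))$ vanish in all sufficiently negative degrees; in fact $H^1(G_I(M))_n = 0$ for $n < 0$ would follow if we knew $H^0(G_I(M))$ is also suitably bounded. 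More precisely, from~\eqref{1st} we get the long exact sequence
\[
\cdots \longrightarrow H^0(L^I(M)(-1)) \longrightarrow H^1(G_I(M)) \longrightarrow H^1(L^I(M)) \longrightarrow H^1(L^I(M)(-1)) \longrightarrow H^2(G_I(M)) \longrightarrow \cdots
\]
and in degree $n < 0$, since $G_I(M)_n = 0$ and hence (as $G_I(M)$ is a finitely generated graded module over $G_I(A)$ which is non-negatively graded) $H^i(G_I(M))_n = 0$ for all $n<0$ and all $i$, we obtain that the multiplication-by-degree-shift map $H^1(L^I(M))_n \xrightarrow{\ \cong\ } H^1(L^I(M))_{n-1}$ is an isomorphism for every $n < 0$. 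Wait — one must be careful about the direction: the map goes $H^1(L^I(M)) \to H^1(L^I(M))(-1)$, i.e. in degree $n$ it sends $H^1(L^I(M))_n$ to $H^1(L^I(M)(-1))_n = H^1(L^I(M))_{n-1}$. Injectivity at degree $n$ comes from $H^1(G_I(M))_n = 0$ and surjectivity from $H^2(G_I(M))_{n-1} = 0$; both hold for $n<0$, hence also $n-1 < 0$.

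So the key step is: for all $n < 0$ we have an isomorphism $H^1(L^I(M))_n \cong H^1(L^I(M))_{n-1}$ induced by the natural degree-shift map. Granting this, if $H^1(L^I(M))_c = 0$ for the fixed $c < 0$, then descending induction on $j$ (using the isomorphism $H^1(L^I(M))_{j} \cong H^1(L^I(M))_{j-1}$ at each step, valid since $j \le c < 0$) gives $H^1(L^I(M))_j = 0$ for all $j \leqslant c$. This yields (ii). I would also invoke Lemma~\ref{oo} (with $g \geqslant 1$, which holds since $\operatorname{grade}(I,M) = g > 0$, so $H^0(L^I(M))_n = 0$ for $n \gg 0$, though here it is the negative-degree behaviour that matters) only indirectly; the real input is just that $G_I(M)$ lives in non-negative degrees.

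The main obstacle — really the only subtle point — is justifying that $H^i(G_I(M))_n = 0$ for all $n < 0$ and the relevant $i \in \{0,1,2\}$. This is where I would be careful: $G_I(M)$ is a finitely generated graded module over the standard-graded ring $G_I(A)$ concentrated in non-negative degrees, local cohomology $H^i_{G_+}(-)$ is computed via the Čech complex on generators of $G_+$ (all in degree $1$), and such a complex applied to a non-negatively graded module has all terms, hence all cohomology, vanishing in negative degrees. (Alternatively one cites that $G_I(M)$ being non-negatively graded forces $H^i(G_I(M))_n=0$ for $n<0$, a standard fact about graded local cohomology with respect to the irrelevant ideal.) Once this is in hand, the exact sequence argument closes immediately; no further computation is needed. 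I would organize the write-up as: (1) recall~\eqref{1st} and its long exact sequence; (2) note $H^i(G_I(M))_n = 0$ for $n<0$; (3) deduce the degree-shift isomorphism on $H^1(L^I(M))$ in negative degrees; (4) run the descending induction from degree $c$.
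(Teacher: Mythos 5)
Your proposal correctly identifies the trivial direction and the shape of the argument — a descending induction driven by a degree-shift map on $H^1(L^I(M))$ coming from one of the fundamental exact sequences — but the key vanishing claim on which everything rests is false, so the proof as written does not go through.

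You work with the first fundamental exact sequence~\eqref{1st} and assert that, because $G_I(M)$ is concentrated in non-negative degrees, one has $H^i_{G_+}(G_I(M))_n = 0$ for all $n < 0$ and all $i$, with the justification that the \v{C}ech complex on the degree-one generators of $G_+$ has all its terms vanishing in negative degrees. This is not correct. The \v{C}ech complex terms are localizations $G_I(M)_{f}$, $G_I(M)_{fg}$, etc., at elements $f, g \in G_1$, and such localizations of a non-negatively graded module do acquire negative-degree components (e.g.\ $m/f$ with $m \in G_I(M)_0$). Indeed, the opposite phenomenon holds: higher local cohomology of a finitely generated non-negatively graded module with respect to the irrelevant ideal tends to be concentrated in \emph{negative} degrees — already $H^1_{(x)}(k[x]) = k[x]_x/k[x] = \bigoplus_{n<0} k\cdot x^n$ is a minimal example. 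What is true, and all that can be used here, is the statement for $i=0$: $H^0_{G_+}(E) \subseteq E$, so a non-negatively graded $E$ has $H^0_{G_+}(E)_n = 0$ for $n<0$. In particular your asserted isomorphisms $H^1(L^I(M))_n \cong H^1(L^I(M))_{n-1}$ for $n<0$ (or even the surjectivity you would need for the descending induction, which requires $H^2(G_I(M))_n = 0$) are unjustified and in general false; note that if these isomorphisms held the lemma would be vacuous, whereas it is stated as a genuine equivalence with one nontrivial direction.

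The paper avoids this trap by using the \emph{second} fundamental exact sequence~\eqref{2nd} with an $M$-superficial element $x$ and $N = M/xM$. After observing that $B^I(x,M)$ is $G_+$-torsion, one extracts from the resulting long exact sequence~\eqref{les} a piece $H^0(L^I(N))_n \to H^1(L^I(M))_{n-1} \to H^1(L^I(M))_n$. The only vanishing required is $H^0(L^I(N))_n = 0$ for $n<0$, which is legitimate precisely because it is an $H^0$-statement about the non-negatively graded module $L^I(N)$. This gives the \emph{injection} $H^1(L^I(M))_{n-1} \hookrightarrow H^1(L^I(M))_n$ for $n<0$, and the descending induction from $c$ then closes the argument. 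So the correct route replaces your $H^{\geqslant 1}$-vanishing for $G_I(M)$ in negative degrees (false) by an $H^0$-vanishing for $L^I(N)$ in negative degrees (true), at the cost of invoking a superficial element and the second fundamental sequence. Your write-up would need to be reorganized along these lines to be valid.
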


\begin{proof}
	We only need to prove (i) $ \Rightarrow $ (ii).
	Suppose $ H^1(L^I(M))_c = 0 $. Let $ x $ be an $ M $-superficial element with respect to $ I $. Then $ (I^{n+1}M :_M x) = I^nM $ for every $ n \gg 0 $, i.e, $ B^I(x,M) $ is $ G_+ $ torsion. Therefore $ H^0(B^I(x,M)) = B^I(x,M) $, and $ H^i(B^I(x,M)) = 0 $ for all $ i \geqslant 1 $. Hence, by splitting \eqref{2nd} into two short exact sequences, and considering the corresponding long exact sequences, one obtains the following exact sequence:
	\begin{align}
	0 \rightarrow B^I(x,M) \longrightarrow & H^0(L^I(M))(-1) \longrightarrow H^0(L^I(M)) \longrightarrow H^0(L^I(N)) \label{les}\\
	\longrightarrow & H^1(L^I(M))(-1) \longrightarrow H^1(L^I(M)) \longrightarrow H^1(L^I(N)),\nonumber
	\end{align}
	where $ N = M/xM $.	Therefore, for every $ n < 0 $, since $ H^0(L^I(N))_n = 0 $, we have the following exact sequence:
	\[
	0 \longrightarrow H^1(L^I(M))_{n-1} \longrightarrow H^1(L^I(M))_n.
	\] 
	Hence, since $ H^1(L^I(M))_c = 0 $, it follows that $ H^1(L^I(M))_j = 0 $ for all $ j \leqslant c $.
\end{proof}

In \cite[Theorem~3.4]{Sh99}, Huckaba and Marley proved that if $ A $ is Cohen-Macaulay with $ \dim(A) \geqslant 2 $, and $I$ is a normal ideal with $ \operatorname{grade}(I) \geqslant 1 $, then $ \operatorname{depth}(G_{I^n}(A)) \geqslant 2 $ for all $n\gg0$. A similar result for $ \operatorname{grade}(G_{I^n}(A)_+, G_{I^n}(A)) $ is shown here.

\begin{theorem}\label{RR}
	Let $ I $ be a normal ideal of $A$ with $ \operatorname{grade}(I) \geqslant 2 $. Also assume that $A$ is excellent.	
	Then
	\[
	\operatorname{grade}(G_{I^n}(A)_+, G_{I^n}(A)) \geqslant 2 \quad \mbox{for all } \; n \gg 0.
	\]
\end{theorem}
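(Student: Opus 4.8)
The plan is to reduce the assertion to two vanishing statements about $L^{I^n}(A)$ via the grade/local-cohomology dictionary of Corollary~\ref{count-sup}, settle the degree-zero piece with normality and Ratliff--Rush, and cut the degree-one piece down -- by a Veronese argument -- to a single vanishing statement about the Rees algebra of $I$ itself.

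First, since $\operatorname{grade}(I^n,A)=\operatorname{grade}(I,A)\geqslant 2$ for all $n$, Corollary~\ref{count-sup} applied to $A$ and the ideal $I^n$ gives
\[
\operatorname{grade}\big(G_{I^n}(A)_+,G_{I^n}(A)\big)=\min\{\,i:H^i(L^{I^n}(A))\neq 0,\ 0\leqslant i\leqslant\operatorname{grade}(I,A)\,\},
\]
so it suffices to prove $H^0(L^{I^n}(A))=0$ and $H^1(L^{I^n}(A))=0$ for all $n\gg 0$ (throughout, $H^i(-)$ is local cohomology with respect to the appropriate irrelevant ideal, as in \ref{mi}). For $H^0$ I would invoke the formula in \ref{pri}: $H^0(L^{I^n}(A))\cong\bigoplus_{i=0}^{\rho^{I^n}(A)-1}\widetilde{(I^n)^{i+1}}/(I^n)^{i+1}$. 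Because $I$ is normal, every power $(I^n)^{j}=I^{nj}$ is integrally closed, and since the Ratliff--Rush closure is contained in the integral closure (\cite{Rez3}) we get $\widetilde{(I^n)^{j}}=(I^n)^{j}$ for all $j\geqslant 1$; hence $\rho^{I^n}(A)=1$ and $H^0(L^{I^n}(A))=\widetilde{I^n}/I^n=0$ for every $n\geqslant 1$. (This already gives $\operatorname{grade}\geqslant 1$.)

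Next, to handle $H^1$, set $b:=\operatorname{end}\big(H^1(L^I(A))\big)$, which is finite by Lemma~\ref{oo}. Using Remark~\ref{rmkk} -- the identity $L^I(A)(-1)^{<n>}=L^{I^n}(A)(-1)$ together with the commutation of the Veronese functor with local cohomology -- and comparing graded pieces, I would obtain $H^1(L^{I^n}(A))_{m}\cong H^1(L^I(A))_{(m+1)n-1}$ for all $m\in\mathbb{Z}$. Fix $n\geqslant b+2$. For $m\geqslant 0$ the index $(m+1)n-1\geqslant n-1>b$, so the group vanishes by the choice of $b$; for $m\leqslant -1$ the index is $\leqslant -1$, so by Lemma~\ref{crucial} (with $c=-1$) it vanishes as soon as $H^1(L^I(A))_{-1}=0$. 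Thus the whole theorem will follow once I show $H^1(L^I(A))_{-1}=0$.

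This last vanishing is where normality is genuinely needed, and I expect it to be the main obstacle. Starting from the exact sequence \eqref{1} and the hypothesis $\operatorname{grade}(I)\geqslant 2$ (so that $x_1t,x_2t$, for an $A$-regular sequence $x_1,x_2\in I$, is an $A[t,t^{-1}]$-regular sequence in $R_+$ and hence $H^0(A[t,t^{-1}])=H^1(A[t,t^{-1}])=0$, exactly as in the proof of Lemma~\ref{oo}), the long exact sequence yields $H^1(L^I(A)(-1))\cong\ker\big(H^2(\hat{\mathcal R}(I))\to H^2(A[t,t^{-1}])\big)$, and $H^2(\hat{\mathcal R}(I))\cong H^2(\mathcal R(I))$ by \eqref{dg2}. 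Taking degree $0$, it then suffices to show $H^2(\mathcal R(I))_0=0$. For a normal ideal $\mathcal R(I)$ is integrally closed in $A[t,t^{-1}]$, and -- using that $A$ is excellent Cohen--Macaulay with $\operatorname{grade}(I)\geqslant 2$ -- one expects the local cohomology $H^i(\mathcal R(I))$ to vanish in all non-negative degrees for $i$ up to $\dim(A)$ (equivalently, the $a$-invariants $a_i(\mathcal R(I))$ are negative there); this is in the spirit of the Huckaba--Marley and Itoh results on normal ideals (\cite{Sh99}) and gives $H^2(\mathcal R(I))_0=0$. An alternative I would try first is a superficial-element induction: $H^0(L^I(A))=0$ together with the second fundamental sequence \eqref{2nd} forces $B^I(x,A)=0$ for a general $x\in I$, so $x^*$ is $G_I(A)$-regular and $G_I(A)/x^*G_I(A)\cong G_I(A/xA)$ by \ref{mod-reg-tony}; one then passes to $A/xA$ (still excellent Cohen--Macaulay, with $\operatorname{grade}$ dropped by one) and tries to carry the normality-type control of $H^0$ through the reduction. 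Either way, the crux is the behaviour of the first local cohomology modules of the Rees algebra of a normal ideal in non-negative degrees; the rest is the bookkeeping above.
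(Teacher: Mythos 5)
Your reduction is set up correctly: by Corollary~\ref{count-sup} (and since $\operatorname{grade}(I^n,A)=\operatorname{grade}(I,A)\geqslant 2$) it does suffice to prove $H^0(L^{I^n}(A))=H^1(L^{I^n}(A))=0$ for $n\gg0$. Your $H^0$ argument via \ref{pri}, the inclusion of the Ratliff--Rush closure in the integral closure, and normality of $I$ is correct (this is a slightly different route than the paper's, which deduces $H^0(L^{I^n}(A))=0$ for $n\gg0$ directly from the Veronese computation and the vanishing of $H^0(L^I(A))$ in high degrees, but both work). Your Veronese bookkeeping $H^1(L^{I^n}(A))_m \cong H^1(L^I(A))_{(m+1)n-1}$ is also correct, and together with Lemma~\ref{crucial} it indeed reduces everything to $H^1(L^I(A))_{-1}=0$ — which, by Corollary~\ref{id}, is simply a restatement of the theorem. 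So the reduction is valid but buys nothing; the crux remains untouched, as you candidly note.

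Neither of your two sketches closes that gap. Approach~1 (vanishing of $H^2(\mathcal{R}(I))_0$) asserts a stronger statement than what is needed and rests on an unreferenced appeal to Itoh/Huckaba--Marley-type results; those results bound the depth of $G_{I^n}(A)$ at its graded maximal ideal, which is a different invariant from $\operatorname{grade}(G_+,G_I(A))$ when $\dim(A/I)>0$, and they do not straightforwardly give $H^2(\mathcal{R}(I))_0=0$. Approach~2 (superficial reduction to $A/xA$) breaks at exactly the point you flag: for a superficial $x\in I$, the ideal $I/(x)$ need not be normal, nor even integrally closed, in $A/(x)$, so you cannot rerun the $H^0$-vanishing argument there. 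That obstruction is precisely what the paper's proof is built to circumvent, and it is also where the excellence hypothesis enters (the paper remarks on this after the theorem).

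The paper's route: fix $K:=I^l$ with $l$ large enough that the Veronese identification kills $H^0(L^K(A))_n$ and $H^1(L^K(A))_n$ for all $n\geqslant 0$. Since $K$ is integrally closed and $A$ is excellent, one may apply Hong--Ulrich \cite[Theorem~2.1]{HU14}: after a faithfully flat extension there is a superficial $x\in K$ such that $J:=K/(x)$ is \emph{integrally closed} in $B:=A/(x)$. Feeding the degree-$\geqslant 0$ vanishing of $H^0,H^1$ of $L^K(A)$ into the long exact sequence \eqref{les} gives $H^0(L^K(B))_n=0$ for $n\geqslant 1$; the remaining degree-$0$ piece is $\widetilde{J}/J$ by \ref{pri}, which vanishes because $\widetilde{J}\subseteq\overline{J}=J$. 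Hence $H^0(L^K(B))=0$, and \eqref{les} then gives injections $H^1(L^K(A))_{n-1}\hookrightarrow H^1(L^K(A))_n$ for all $n$; since the target is $0$ for $n\geqslant 0$, a descending induction yields $H^1(L^K(A))=0$, and Corollary~\ref{count-sup} finishes. Note the essential role of passing to $K=I^l$: Hong--Ulrich controls only the degree-$0$ term of $H^0(L^K(B))$, and the Veronese-induced vanishing in degrees $\geqslant 1$ is what makes the remaining terms harmless. Working with $I$ directly, as you propose, would require controlling $H^0(L^I(B))_n$ for the (unboundedly many) degrees $1\leqslant n\leqslant b^I_1(A)+1$ as well, which Hong--Ulrich does not give. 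That missing ingredient and the missing $l$-th-power device are the genuine gap.
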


\begin{proof}
	Set $ u := \max\{ b^I_0(A), b^I_1(A) \} + 2 $. Let $ l \geqslant u $. We write $ H_{R_+}^i(L^I(A)) = \bigoplus_{n \in \mathbb{Z}} V^i_n $ as it is a graded $\mathcal{R}(I)$-module. It can be observed that $ V_{nl-1}^i = 0 $ for all $ n \geqslant 1 $ and $ i = 0, 1 $. We note that
	\begin{align}
	H_{\mathcal{R}(I^l)_+}^i\left( L^{I^l}(A) \right) (-1) & \cong H_{\mathcal{R}(I^l)_+}^i\left( L^{I^l}(A)(-1) \right) \nonumber\\
	& \cong H_{(R_+)^{<l>}}^i \left( \left( L^I(A)(-1) \right)^{<l>} \right) \mbox{ [by Remark~\ref{rmkk}.(i)]} \label{rmk2.4}\\
	& \cong \left( H_{R_+}^i \left( L^I(A)(-1) \right) \right)^{<l>} \mbox{ [by Remark~\ref{rmkk}.(ii)]} \nonumber\\
	& \cong \bigoplus_{n \in \mathbb{Z}} V^i_{nl - 1}.\nonumber
	\end{align}
	Therefore, for every $ i \in \{ 0, 1 \} $, since $ V_{nl-1}^i = 0 $ for all $ n \geqslant 1 $, we have
	\begin{equation}\label{h0}
	H_{\mathcal{R}(I^l)_+}^i ( L^{I^l}(A) )_n = 0, \mbox{ i.e., }H_{\mathcal{R}(K)_+}^i \big( L^{K}(A) \big)_n = 0 \mbox{ for all } n \geqslant 0,
	\end{equation}
	where $ K := I^l $.	In particular, it follows that $ H_{\mathcal{R}(K)_+}^0(L^K(A)) = 0 $. We now show that $ H_{\mathcal{R}(K)_+}^1(L^K(A)) = 0 $. Note that $K$ is integrally closed. Therefore, by virtue of \cite[Theorem~2.1]{HU14}, after a flat extension, there exists  a superficial element $ x \in K $ such that the ideal $ J := K/(x) $ is integrally closed in $ B := A/(x) $. In view of a sequence like \eqref{les}, by applying \eqref{h0}, we obtain that $ H_{\mathcal{R}(K)_+}^0(L^K(B))_n = 0 $ for all $ n \geqslant 1 $. Hence, by \ref{pri}, we have $ H_{\mathcal{R}(K)_+}^0(L^K(B)) \cong \widetilde{J}/J = 0 $ as $ J $ is integrally closed; see \cite[2.3.3]{RR78}. Therefore, for every $ n $, a sequence like \eqref{les} yields the following exact sequence:
	\begin{equation}\label{0hh}
	0 \longrightarrow H_{\mathcal{R}(K)_+}^1(L^K(A))_{n-1} \longrightarrow H_{\mathcal{R}(K)_+}^1(L^K(A))_{n}.
	\end{equation}
	Since $ H_{\mathcal{R}(K)_+}^1(L^K(A))_n = 0 $ for all $ n \geqslant 0 $, it can be proved by repeatedly applying \eqref{0hh} that $ H_{\mathcal{R}(K)_+}^1(L^K(A))_n = 0 $ for all $ n $, and hence $ H_{\mathcal{R}(K)_+}^1(L^K(A)) = 0 $.	Thus, by virtue of Corollary~\ref{count-sup}, we have that $ \operatorname{grade}(G_{I^l}(A)_+,G_{I^l}(A)) \geqslant 2 $, and this holds true for every $ l \geqslant u $, which completes the proof of the theorem.
\end{proof}

\begin{remark}
	We have used \cite[Theorem~2.1]{HU14} crucially in the proof above. This is the only place where we need that the ring is excellent.
\end{remark}

Since $ \operatorname{grade}(I^n) = \operatorname{grade}(I) $ for every $ n \geqslant 1 $, as an immediate consequence of Theorem~\ref{RR} and Lemma~\ref{hilbSyz}, one obtains the following result.

\begin{corollary}\label{cor-grade-2}
	Let $ I $ be a normal ideal of $A$ with $ \operatorname{grade}(I) = 2 $. Also assume that $A$ is excellent. Then
	\[
	\operatorname{grade}(G_{I^n}(A)_+, G_{I^n}(A)) = 2 \mbox{ for all } n \gg 0.
	\]
\end{corollary}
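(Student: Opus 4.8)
The plan is to sandwich $\operatorname{grade}(G_{I^n}(A)_+, G_{I^n}(A))$ between $2$ and $2$ for all large $n$, using the two results just established, so the argument will be a short two–step affair. For the lower bound I would invoke Theorem~\ref{RR} directly: since $A$ is excellent Cohen--Macaulay and $I$ is normal with $\operatorname{grade}(I) \geqslant 2$ (indeed $= 2$), that theorem gives $\operatorname{grade}(G_{I^n}(A)_+, G_{I^n}(A)) \geqslant 2$ for all $n \gg 0$.

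For the matching upper bound I would apply Lemma~\ref{hilbSyz} with the module $M = A$ and with the ideal $I^n$ playing the role of $I$; this yields $\operatorname{grade}(G_{I^n}(A)_+, G_{I^n}(A)) \leqslant \operatorname{grade}(I^n, A)$ for every $n \geqslant 1$. The only thing to observe is that $\operatorname{grade}(I^n, A) = \operatorname{grade}(I, A)$, which is standard since $I$ and $I^n$ have the same radical and grade on a module depends only on the radical of the ideal; combined with the hypothesis $\operatorname{grade}(I) = 2$ this gives $\operatorname{grade}(G_{I^n}(A)_+, G_{I^n}(A)) \leqslant 2$ for all $n \geqslant 1$.

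Putting the two bounds together gives $\operatorname{grade}(G_{I^n}(A)_+, G_{I^n}(A)) = 2$ for all $n \gg 0$, as claimed. There is no real obstacle here: the genuine work — the excellence hypothesis, the integral-closure input via \cite[Theorem~2.1]{HU14}, and the local cohomology bookkeeping with $L^{I^l}(A)$ and the Veronese functor — has already been carried out in the proof of Theorem~\ref{RR}, and Lemma~\ref{hilbSyz} supplies the cheap converse inequality. The single line that deserves to be written out is the identification $\operatorname{grade}(I^n, A) = \operatorname{grade}(I, A) = 2$.
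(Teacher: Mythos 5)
Your argument matches the paper's own proof exactly: Theorem~\ref{RR} supplies the lower bound $\geqslant 2$, Lemma~\ref{hilbSyz} applied with $I^n$ in place of $I$ gives the upper bound $\operatorname{grade}(G_{I^n}(A)_+,G_{I^n}(A)) \leqslant \operatorname{grade}(I^n,A) = \operatorname{grade}(I,A) = 2$, and the corollary follows. The paper states this as an immediate consequence in one sentence, and your write-up is just the same reasoning spelled out.
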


The following theorem gives an asymptotic lower bound of grade of associated graded modules for powers of an ideal.

\begin{theorem}\label{1p}
	For each $ l > \operatorname{amp}_{I}(M) $, $ \operatorname{grade}(G_{I^l}(A)_+, G_{I^l}(M)) \geqslant \xi_{I}(M) $.
\end{theorem}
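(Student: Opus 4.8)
The plan is to reduce the grade computation to a vanishing statement for the low-degree local cohomology of $L^{I^l}(M)$ via Corollary~\ref{count-sup}, and then to deduce that vanishing from the definitions of $\xi_I(M)$ and $\operatorname{amp}_I(M)$ by transporting the local cohomology of $L^I(M)$ through the $l$th Veronese functor. Write $\xi := \xi_I(M)$ and $a := \operatorname{amp}_I(M)$, and fix $l > a$; note $1 \leqslant \xi \leqslant g$. Since $I^l$ and $I$ have the same radical, $\operatorname{grade}(I^l,M) = \operatorname{grade}(I,M) = g$, so by Corollary~\ref{count-sup} it suffices to prove that $H^i\big(L^{I^l}(M)\big) = 0$ for all $0 \leqslant i \leqslant \xi - 1$.

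First I would record what the two invariants say about $H^i(L^I(M))$ in the range $0 \leqslant i \leqslant \xi - 1$. By the definition of $\xi$, for such $i$ we have $H^i(L^I(M))_{-1} = 0$ and $H^i(L^I(M))_j \neq 0$ for only finitely many $j < 0$; together with Lemma~\ref{oo} this forces $H^i(L^I(M))$ to be concentrated in finitely many degrees. Unwinding the definition of $\operatorname{amp}_I(M)$ (writing a nonzero degree as $m = n - 1$ with $|n| \leqslant a$) then shows each such nonzero degree $m$ satisfies $-a - 1 \leqslant m \leqslant a - 1$, and moreover $m \neq -1$.

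Next comes the Veronese step, carried out exactly as in the proof of Theorem~\ref{RR}. By Remark~\ref{rmkk}.(i), $L^{I^l}(M)(-1) = \big(L^I(M)(-1)\big)^{<l>}$, and by Remark~\ref{rmkk}.(ii) together with the fact that local cohomology commutes with degree shift,
\[
H^i\big(L^{I^l}(M)(-1)\big) \;\cong\; \Big( H^i\big(L^I(M)\big)(-1) \Big)^{<l>},
\]
whose $n$th graded piece is $H^i(L^I(M))_{nl - 1}$. I then check this piece vanishes for every $n \in \mathbb{Z}$ when $0 \leqslant i \leqslant \xi - 1$: for $n = 0$ the relevant degree is $-1$, where $H^i(L^I(M))$ vanishes by the choice of $\xi$; for $n \geqslant 1$ the degree $nl - 1 \geqslant l - 1 \geqslant a$ lies strictly above the top nonzero degree $a - 1$; and for $n \leqslant -1$ the degree $nl - 1 \leqslant -l - 1 \leqslant -a - 2$ lies strictly below the bottom nonzero degree $-a - 1$. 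Hence $H^i\big(L^{I^l}(M)(-1)\big) = 0$, so $H^i\big(L^{I^l}(M)\big) = 0$, for all $0 \leqslant i \leqslant \xi - 1$, and Corollary~\ref{count-sup} gives $\operatorname{grade}(G_{I^l}(A)_+, G_{I^l}(M)) \geqslant \xi$.

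Once the Veronese isomorphism is in hand the argument is pure bookkeeping; the only point requiring genuine care — the main obstacle — is arranging the degree window dictated by $\operatorname{amp}_I(M)$ so that the arithmetic progression $\{\, nl - 1 : n \in \mathbb{Z} \,\}$ avoids it entirely for $l > a$. That is exactly why the amplitude is phrased in terms of the shifted module $L^I(M)(-1)$, and why the degree $-1$ (the case $n = 0$) is excluded separately through the definition of $\xi$ rather than through the amplitude.
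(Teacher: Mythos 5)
Your proof is correct and follows essentially the same route as the paper's: reduce via Corollary~\ref{count-sup} to vanishing of $H^i(L^{I^l}(M))$ for $0 \leqslant i \leqslant \xi_I(M)-1$, then transport through the $l$th Veronese of $L^I(M)(-1)$ and check that every relevant degree $nl-1$ falls outside the support window dictated by $\xi_I(M)$ (for $n=0$) and $\operatorname{amp}_I(M)$ (for $n\neq 0$). Your explicit unpacking of the amplitude into the interval $[-a-1,\,a-1]$ is a slightly more verbose form of the paper's one-line estimate $|n|l\geqslant l>\operatorname{amp}_I(M)$, but the mechanism is identical.
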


\begin{proof}
	Set $ E^i := H^{i}(L^{I}(M)(-1)) $, and $ u := \xi_{I}(M) $. Fix an arbitrary $ l > \operatorname{amp}_{I}(M) $. Also fix $ i $ with $ 0 \leqslant i \leqslant u - 1 $. Then, for $ n \neq 0 $, we have $ E^{i}_{nl} = H^i(L^I(M))_{nl-1} = 0 $ as $ |n| l \geqslant l > \operatorname{amp}_{I}(M) $. Also $ E^{i}_{0} = H^i(L^I(M))_{-1} = 0 $ since $ 0 \leqslant i \leqslant \xi_{I}(M) - 1 $. Hence $(E^{i})^{<l>} = \bigoplus_{n\in \mathbb{Z}} E^{i}_{nl}=0$. So, by Remark~\ref{rmkk} (as in \eqref{rmk2.4}), it follows that
	\begin{equation}\label{vero-loc}
	H_{\mathcal{R}(I^l)_+}^i \left( L^{I^l}(M)(-1) \right) = \left( H_{R_+}^{i} \left(L^{I}(M)(-1) \right) \right)^{<l>} = 0.
	\end{equation}
	Therefore $ H_{R(I^l)_+}^i \left( L^{I^l}(M) \right) = 0 $ for all $ 0 \leqslant i \leqslant u - 1 $. Hence, by virtue of Corollary~\ref{count-sup}, $\operatorname{grade}(G_{I^l}(A)_+, G_{I^l}(M)) \geqslant u = \xi_{I}(M)$  for all $ l > \operatorname{amp}_{I}(M) $.
\end{proof}

The following corollary shows that how the vanishing of a single component of certain local cohomology plays a crucial role in the study of grade of asymptotic associated graded modules.

\begin{corollary}\label{id}
	The following conditions are equivalent:
	\begin{enumerate}[{\rm (i)}]
		\item $ H^1(L^I(M))_{-1} = 0 $.
		\item $\operatorname{grade}(G_{I^l}(A)_+, G_{I^l}(M)) \geqslant 2 $ for all $ l > \operatorname{amp}_{I}(M) $.
		\item $\operatorname{grade}(G_{I^l}(A)_+, G_{I^l}(M)) \geqslant 2 $ for some $ l \geqslant 1 $.
	\end{enumerate}
\end{corollary}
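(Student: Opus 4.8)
The plan is to prove the cycle of implications (iii)\,$\Rightarrow$\,(i)\,$\Rightarrow$\,(ii)\,$\Rightarrow$\,(iii). The implication (ii)\,$\Rightarrow$\,(iii) is immediate, since $\operatorname{amp}_I(M) < \infty$ (see \ref{invariants}), so integers $l > \operatorname{amp}_I(M)$ exist. The common tool for the other two implications is the compatibility of the construction $L^I(-)$ with the Veronese functor. By Remark~\ref{rmkk}(i) one has $\bigl(L^I(M)(-1)\bigr)^{<l>} = L^{I^l}(M)(-1)$, and by Remark~\ref{rmkk}(ii) the Veronese functor commutes with local cohomology; combined with \ref{mi} (it is immaterial which graded ring one uses to compute local cohomology, and $(R_+)^{<l>}$ is the irrelevant ideal of $\mathcal{R}(I^l)$), this gives, for every $l \geqslant 1$ and every $i$, a graded isomorphism $H^i\bigl(L^{I^l}(M)(-1)\bigr) \cong \bigl(H^i(L^I(M)(-1))\bigr)^{<l>}$. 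Comparing the degree-$0$ strands and using $H^i(N(-1))_0 = H^i(N)_{-1}$, I obtain the key identity
\[
H^i\bigl(L^{I^l}(M)\bigr)_{-1} \;=\; H^i\bigl(L^{I}(M)\bigr)_{-1} \qquad \text{for every } l \geqslant 1 .
\]

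For (iii)\,$\Rightarrow$\,(i): if $\operatorname{grade}\bigl(G_{I^l}(A)_+, G_{I^l}(M)\bigr) \geqslant 2$ for some $l$, then, since $\operatorname{grade}(I^l,M) = \operatorname{grade}(I,M) = g > 0$, Corollary~\ref{count-sup} applied to $I^l$ forces $H^0(L^{I^l}(M)) = H^1(L^{I^l}(M)) = 0$, and the key identity gives $H^1(L^I(M))_{-1} = 0$, which is (i). For (i)\,$\Rightarrow$\,(ii): assume $H^1(L^I(M))_{-1} = 0$. By Lemma~\ref{crucial} (with $c=-1$) we get $H^1(L^I(M))_j = 0$ for all $j \leqslant -1$, while \ref{pri} shows $H^0(L^I(M))$ is concentrated in non-negative degrees. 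Feeding this into the definition of $\xi_I(M)$, neither $i=0$ nor $i=1$ realizes the minimum, so $\xi_I(M) \geqslant 2$ (in particular $g \geqslant 2$), and Theorem~\ref{1p} yields $\operatorname{grade}\bigl(G_{I^l}(A)_+, G_{I^l}(M)\bigr) \geqslant \xi_I(M) \geqslant 2$ for all $l > \operatorname{amp}_I(M)$, i.e.\ (ii). It remains to dispose of the degenerate case $g = \operatorname{grade}(I,M) = 1$: here (ii) and (iii) fail automatically, since $\operatorname{grade}\bigl(G_{I^l}(A)_+, G_{I^l}(M)\bigr) \leqslant \operatorname{grade}(I^l,M) = 1$ by Lemma~\ref{hilbSyz}, so one is reduced to checking that (i) also fails, i.e.\ that $H^1(L^I(M))_{-1} \neq 0$ whenever $\operatorname{grade}(I,M) = 1$. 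I would attack this via the exact sequence \eqref{1}: since $R_+\cdot A[t,t^{-1}] = I\cdot A[t,t^{-1}]$ one has $H^j_{R_+}(M[t,t^{-1}]) \cong H^j_I(M)[t,t^{-1}]$, which in degree $g = 1$ is the non-zero group $H^1_I(M)$ placed in every degree, whereas $H^j_{R_+}(\hat{\mathcal R}(I,M))_n = 0$ for $n \gg 0$; the long exact sequence of \eqref{1} then forces $H^1(L^I(M))_n \neq 0$ for $n \gg 0$, and one must push this down (e.g.\ exploiting the monomorphisms $H^1(L^I(M))_{n-1}\hookrightarrow H^1(L^I(M))_n$ for $n < 0$ coming from \eqref{les}) to conclude $H^1(L^I(M))_{-1}\neq 0$.

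The individual implications are short; the point that needs care is that (i) is a statement about a \emph{single} graded component $H^1(L^I(M))_{-1}$, whereas (ii) concerns the \emph{whole} modules $H^1(L^{I^l}(M))$. Bridging this gap is precisely the role of Lemma~\ref{crucial} on the negative side (one vanishing negative component forces all of them) and of the finiteness of $\operatorname{amp}_I(M)$ on the positive side (after passing to a high enough Veronese, the only potentially non-zero low-degree component collapses onto the $(-1)$-component of $H^i(L^I(M))$); interlocking these two ranges correctly in the degree bookkeeping is where the work lies. The step I expect to be the genuine obstacle is the degenerate case $\operatorname{grade}(I,M) = 1$, where Theorem~\ref{1p} only delivers $\xi_I(M) = 1$ and one must instead argue directly, via \eqref{1} and the nonvanishing of $H^1_I(M)$, that condition (i) cannot hold.
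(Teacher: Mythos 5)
Your cycle $(\mathrm{iii})\Rightarrow(\mathrm{i})\Rightarrow(\mathrm{ii})\Rightarrow(\mathrm{iii})$, the Veronese identification $H^i\bigl(L^{I^l}(M)\bigr)_{-1}=H^i\bigl(L^I(M)\bigr)_{-1}$, the use of Corollary~\ref{count-sup} for $(\mathrm{iii})\Rightarrow(\mathrm{i})$, and the use of Lemma~\ref{crucial} together with Theorem~\ref{1p} for $(\mathrm{i})\Rightarrow(\mathrm{ii})$ are exactly the steps in the paper's own proof, so for $g=\operatorname{grade}(I,M)\geqslant 2$ your argument is a faithful reconstruction. Where you diverge from the paper is that you explicitly flag the degenerate case $g=1$. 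That is a genuinely worthwhile observation: the paper's line ``since $\xi_I(M)\geqslant 1$ (always), it follows that $\xi_I(M)\geqslant 2$'' silently uses $g\geqslant 2$, because the definition of $\xi_I(M)$ caps it at $g$ (the index $i$ only ranges over $0\leqslant i\leqslant g-1$), so when $g=1$ one automatically has $\xi_I(M)=1$ regardless of whether $(\mathrm{i})$ holds. Since Lemma~\ref{hilbSyz} forces $(\mathrm{ii})$ and $(\mathrm{iii})$ to fail when $g=1$, the content of the corollary in that case is precisely the assertion $H^1(L^I(M))_{-1}\neq 0$, which must be argued separately. You identified this correctly.

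The defect in your proposal is the sketch you give for that degenerate case. You are right that the long exact sequence of \eqref{1} shows $H^1(L^I(M))_{n}\cong H^1_I(M)\neq 0$ for $n\gg 0$, but the monomorphisms $H^1(L^I(M))_{n-1}\hookrightarrow H^1(L^I(M))_n$ coming from \eqref{les} are only available for $n<0$; they relate components in degrees $\leqslant -1$ among themselves (this is exactly what Lemma~\ref{crucial} extracts) and cannot transport the eventual nonvanishing at $n\gg 0$ down to degree $-1$ across the boundary $n=0$, since there is no such injection out of $H^1(L^I(M))_{-1}$ into the non-negative degrees. So ``pushing down'' does not go through as written. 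A workable alternative is to look at the degree-$0$ strand of the long exact sequence of \eqref{1} directly: using $H^0_{R_+}(M[t,t^{-1}])_0=H^0_I(M)=0$ and $L^I(M)_{-1}=0$, it yields an exact sequence
\[
H^1_{R_+}\bigl(\hat{\mathcal R}(I,M)\bigr)_0 \longrightarrow H^1_I(M) \longrightarrow H^1\bigl(L^I(M)\bigr)_{-1},
\]
and one then has to argue that the first map is not surjective when $g=1$ (for instance by analysing $H^1_{R_+}(\mathcal R(I,M))_0\cong H^1_{R_+}(\hat{\mathcal R}(I,M))_0$). That is the part requiring a real argument, and it is missing both from your sketch and from the paper's proof as written.
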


\begin{proof}
	(i) $ \Rightarrow $ (ii): Let $ H^1(L^I(M))_{-1} = 0 $. So, by Lemma~\ref{crucial}, $ H^1(L^I(M))_j = 0 $ for all $j\leqslant-1$. Therefore, since $ \xi_{I}(M) \geqslant 1$ (always), it follows that $ \xi_{I}(M) \geqslant 2$. Hence, in view of Theorem~\ref{1p}, $\operatorname{grade}(G_{I^l}(A)_+, G_{I^l}(M))\geqslant2$ for all $ l > \operatorname{amp}_{I}(M) $.
	
	(ii) $ \Rightarrow $ (iii): It holds trivially.
	
	(iii) $ \Rightarrow $ (i): Suppose $ \operatorname{grade}(G_{I^l}(A)_+, G_{I^l}(M)) \geqslant 2 $ for some $ l \geqslant 1 $. Then it follows from Corollary~\ref{count-sup} that $H_{\mathcal{R}(I^l)_+}^1 \big( L^{I^l}(M) \big) = 0 $. Therefore, as in \eqref{vero-loc}, we obtain that $ H_{R_+}^{i} \left( L^{I}(M)(-1) \right)^{<l>} = 0 $, and hence its $ 0 $th component provides us $ H^1(L^I(M))_{-1} = 0 $.
\end{proof}

As a consequence, we obtain the following asymptotic behaviour of associated graded modules for powers of an ideal.

\begin{corollary}\label{thm-xi-1}
	Exactly one of the following alternatives must hold true:
	\begin{enumerate}[{\rm (i)}]
		\item $ \operatorname{grade}(G_{I^n}(A)_+, G_{I^n}(M)) = 1 $ for all $ n > \operatorname{amp}_{I}(M) $.
		\item $ \operatorname{grade}(G_{I^n}(A)_+, G_{I^n}(M)) \geqslant 2 $ for all $ n > \operatorname{amp}_{I}(M) $.
	\end{enumerate}
\end{corollary}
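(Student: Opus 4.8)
The plan is to obtain this statement as a short formal consequence of Theorem~\ref{1p} and Corollary~\ref{id}, with no new computation required. First I would record a uniform lower bound in the asymptotic range: for every $n > \operatorname{amp}_{I}(M)$, Theorem~\ref{1p} gives
\[
\operatorname{grade}(G_{I^n}(A)_+, G_{I^n}(M)) \geqslant \xi_{I}(M) \geqslant 1,
\]
where the last inequality is built into the definition of $\xi_{I}(M)$ in \ref{invariants}. So in this range the grade is always at least $1$, and it remains only to decide between ``equal to $1$'' and ``at least $2$''.

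Next I would split into two cases according to whether the single graded component $H^1(L^I(M))_{-1}$ vanishes. If $H^1(L^I(M))_{-1} = 0$, then the implication (i)~$\Rightarrow$~(ii) of Corollary~\ref{id} yields $\operatorname{grade}(G_{I^n}(A)_+, G_{I^n}(M)) \geqslant 2$ for all $n > \operatorname{amp}_{I}(M)$, which is alternative~(ii). If instead $H^1(L^I(M))_{-1} \neq 0$, then the contrapositive of the implication (iii)~$\Rightarrow$~(i) of Corollary~\ref{id} shows that $\operatorname{grade}(G_{I^n}(A)_+, G_{I^n}(M)) < 2$ for \emph{every} $n \geqslant 1$; combining this with the lower bound $\geqslant 1$ from the previous step gives $\operatorname{grade}(G_{I^n}(A)_+, G_{I^n}(M)) = 1$ for all $n > \operatorname{amp}_{I}(M)$, which is alternative~(i).

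Finally I would observe that the two alternatives are mutually exclusive: there are infinitely many integers $n > \operatorname{amp}_{I}(M)$, and for any such $n$ the number $\operatorname{grade}(G_{I^n}(A)_+, G_{I^n}(M))$ cannot simultaneously equal $1$ and be $\geqslant 2$. Hence exactly one of (i), (ii) holds. I do not anticipate any genuine obstacle here: all the substantive content — the Veronese/local-cohomology translation underlying Theorem~\ref{1p}, and the ``a single component controls everything'' phenomenon captured by Lemma~\ref{crucial} and Corollary~\ref{id} — has already been established, and the present statement merely packages those results into a clean dichotomy.
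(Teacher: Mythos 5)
Your proposal is correct and follows essentially the same route as the paper: establish the uniform lower bound $\operatorname{grade}(G_{I^n}(A)_+, G_{I^n}(M)) \geqslant \xi_I(M) \geqslant 1$ for all $n > \operatorname{amp}_I(M)$ via Theorem~\ref{1p}, and then appeal to Corollary~\ref{id} to dichotomize. The paper states the second step more tersely (``the result follows from Corollary~\ref{id}''), whereas you have simply made the implicit case analysis on $H^1(L^I(M))_{-1}$ explicit.
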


\begin{proof}
	Since $ \xi_{I}(M) \geqslant 1 $, by virtue of Theorem~\ref{1p}, $ \operatorname{grade}(G_{I^n}(A)_+, G_{I^n}(M)) \geqslant 1$ for all $ n > \operatorname{amp}_{I}(M) $. Hence the result follows from Corollary~\ref{id}.
\end{proof}

Here we prove our main result of this section.

\begin{theorem}\label{thm-xi-2}
	With Hypothesis~{\rm \ref{hyp-sec-3}}, suppose $ \operatorname{height}(I) \geqslant \dim(A) - 2 $. Then
	\[
	\operatorname{grade}(G_{I^l}(A)_+, G_{I^l}(M)) = \xi_{I}(M) \mbox{ for every } l > \operatorname{amp}_{I}(M).
	\]
\end{theorem}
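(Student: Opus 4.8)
\emph{Proof proposal.}
Since Theorem~\ref{1p} already gives $\operatorname{grade}(G_{I^l}(A)_+,G_{I^l}(M))\geqslant\xi_I(M)$ for all $l>\operatorname{amp}_I(M)$, the plan is to establish the reverse inequality. Write $u:=\xi_I(M)$ and $g:=\operatorname{grade}(I,M)$, so $1\leqslant u\leqslant g$. Two extremes are immediate and use nothing about $\operatorname{height}(I)$. If $u=g$, then Lemma~\ref{hilbSyz} applied to $I^l$ gives $\operatorname{grade}(G_{I^l}(A)_+,G_{I^l}(M))\leqslant\operatorname{grade}(I^l,M)=g=u$. If $u=1$, then the definition of $\xi_I(M)$ together with Lemma~\ref{crucial} (upward closedness of the nonzero negative degrees of $H^1(L^I(M))$) forces $H^1(L^I(M))_{-1}\neq 0$, and Corollaries~\ref{thm-xi-1} and \ref{id} then give $\operatorname{grade}(G_{I^l}(A)_+,G_{I^l}(M))=1=u$. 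So from now on I would assume $2\leqslant u\leqslant g-1$.

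By Corollary~\ref{count-sup} applied to $I^l$ (legitimate, as $\operatorname{grade}(I^l,M)=g\geqslant u$) it suffices to prove $H^u_{\mathcal{R}(I^l)_+}(L^{I^l}(M))\neq 0$ for every $l>\operatorname{amp}_I(M)$. The Veronese isomorphism of Remark~\ref{rmkk}, exactly as in \eqref{vero-loc}, identifies this module (after a shift) with $\bigoplus_{n}H^u(L^I(M))_{nl-1}$; hence it is enough to prove the single statement $H^u(L^I(M))_{-1}\neq 0$, and the theorem falls out with $n=0$. Now by the definition of $u=\xi_I(M)$, the only way $H^u(L^I(M))_{-1}$ can vanish is if $H^u(L^I(M))$ is nonzero in infinitely many negative degrees while vanishing at $-1$. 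To rule this out I would prove the following extension of Lemma~\ref{crucial} from cohomological level $1$ to level $u$: under Hypothesis~\ref{hyp-sec-3} together with $\operatorname{height}(I)\geqslant\dim(A)-2$, for every $i$ with $1\leqslant i\leqslant\xi_I(M)$ and $i<g$, the natural maps $H^i(L^I(M))_{j-1}\hookrightarrow H^i(L^I(M))_j$ are injective for all $j<0$. Granting this at level $u$, $H^u(L^I(M))_{-1}=0$ would propagate downward to $H^u(L^I(M))_j=0$ for all $j\leqslant-1$, contradicting the ``infinitely many negative degrees'' alternative; hence $H^u(L^I(M))_{-1}\neq 0$, as required. (One would prove the extension jointly with the companion fact that $H^i(L^I(M))$ vanishes in \emph{all} negative degrees for $i<\xi_I(M)$, and with $H^{\xi_I(M)}(L^I(M))_{-1}\neq 0$ when $\xi_I(M)<g$.)

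The extension of Lemma~\ref{crucial} I would prove by induction on $\dim(A)$. The base case $\dim(A)\leqslant 2$ is covered by the easy cases above, because then $g\leqslant 2$ forces $u\in\{1,g\}$; and the level $i=1$ instance is literally Lemma~\ref{crucial}. For the inductive step ($\dim A\geqslant 3$, whence $\operatorname{height}(I)\geqslant 1$ and $\operatorname{grade}(I,A)>0$), choose $x\in I$ superficial for $I$ with respect to $M$ and simultaneously $A$-regular and $M$-regular, and put $\bar A:=A/xA$, $\bar I:=I\bar A$, $N:=M/xM$. Then $\dim\bar A=\dim A-1$, $N$ is Cohen--Macaulay over $\bar A$ with $\operatorname{grade}(\bar I,N)=g-1>0$, and crucially $\operatorname{height}(\bar I)\geqslant\dim(\bar A)-2$, because $x$ is $A$-regular and $A$ is Cohen--Macaulay, so the height of $I$ drops by exactly one; thus the full hypothesis is inherited by $(\bar A,\bar I,N)$. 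The second fundamental exact sequence \eqref{2nd} for $(I,x,M)$, together with the fact that $B^I(x,M)$ is $G_+$-torsion (superficiality, so all its positive local cohomology vanishes), yields the long exact sequence relating $H^\bullet(L^I(M))$, $H^\bullet(L^{\bar I}(N))$ and $H^\bullet(L^I(M))(-1)$ in the shape of \eqref{les} continued to higher degrees; from it the comparison map at level $i$ is injective in a negative degree $j$ as soon as $H^{i-1}(L^{\bar I}(N))_j=0$, and the inductive hypothesis applied to $(\bar A,\bar I,N)$ supplies this vanishing (for all $j<0$) at every level $i-1<\xi_{\bar I}(N)$, after one notes that $\xi_{\bar I}(N)\geqslant\xi_I(M)-1$ by reading the same long exact sequence in the bounded range.

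The main obstacle is precisely the level $i=\xi_I(M)$ instance of the extension, equivalently the statement $H^{\xi_I(M)}(L^I(M))_{-1}\neq 0$: when $\xi_{\bar I}(N)\geqslant\xi_I(M)$ it follows from the induction as above, but in the borderline case $\xi_{\bar I}(N)=\xi_I(M)-1$ the induction stalls, and this is exactly where the hypothesis $\operatorname{height}(I)\geqslant\dim(A)-2$ must do its real work. In that case I would argue directly with grades rather than with a single component: by induction the theorem holds over $\bar A$, so $\operatorname{grade}(G_{J^l}(\bar A)_+,G_{J^l}(N))=\xi_{\bar I}(N)=\xi_I(M)-1$ for $l\gg0$, where $J:=\bar I$; on the other hand, passing back up and using that a superficial element $yt$ of $\mathcal{R}(I^l)$ acts as a zerodivisor on the $\mathcal{R}(I^l)_+$-torsion module $H^a_{\mathcal{R}(I^l)_+}(L^{I^l}(M))$ (where $a$ is the grade over $A$), one shows the grade over $A$ exceeds the grade over $A/yA$ by exactly one, which pins $a=\xi_I(M)$. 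A secondary nuisance here is that a superficial element of $I$ need not be superficial for a power $I^l$, so one must use superficial elements of $I^l$ for $l\gg 0$, observing that they still lie in $I$ and are $A$-regular so the height hypothesis persists after killing them; one also uses that the relevant $B^{I^l}$-term vanishes once $H^0(L^{I^l}(M))=0$, which holds since the grade over $A$ is at least $\xi_I(M)\geqslant 1$. By contrast, the Veronese manipulations and the two fundamental exact sequences are entirely routine, and the low-level vanishing in all negative degrees is a straightforward induction on the cohomological level using the same long exact sequence.
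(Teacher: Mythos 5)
Your proposal takes a genuinely different route from the paper, and it has a real gap precisely where you acknowledge the difficulty.

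The paper's argument is a short proof by contradiction: assume $\operatorname{grade}(G_{I^l}(A)_+,G_{I^l}(M))>u$ for some $l>\operatorname{amp}_I(M)$; Veronese gives $H^u(L^I(M))_{nl-1}=0$ for all $n$; feeding this into the long exact sequence coming from~\eqref{1st} and using $H^{u-1}(L^I(M))_n=0$ for $n\ll0$ (built into the definition of $\xi_I(M)$) yields $H^u(G_I(M))_{nl-1}=0$ for $n\ll0$; and then the decisive step is the \emph{tameness} of $H^u_{G_+}(G_I(M))$, which holds by \cite[Lemma~4.3]{Bb} because $\dim((G_I(A))_0)=\dim(A/I)\leqslant 2$ (this is exactly what the hypothesis $\operatorname{height}(I)\geqslant\dim(A)-2$ delivers over a CM ring) and $G_I(M)$ is finitely generated. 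Tameness upgrades ``vanishing along an arithmetic progression'' to ``vanishing for all $j\ll0$'', from which an injectivity chain pushes $H^u(L^I(M))_j=0$ down to $j=-1$, contradicting $u=\xi_I(M)<g$. This is the only place the height hypothesis is used, and it does genuine analytic work: it is applied to the \emph{finitely generated} module $G_I(M)$, not to $L^I(M)$ (which is not finitely generated, so no tameness theorem applies to it directly).

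Your plan replaces this with an attempted induction on $\dim(A)$ designed to prove a level-$u$ analogue of Lemma~\ref{crucial} (injectivity of $H^u(L^I(M))_{j-1}\hookrightarrow H^u(L^I(M))_j$ for $j<0$) together with a companion vanishing statement. The difficulty you correctly flag — the borderline case $\xi_{\bar I}(N)=\xi_I(M)-1$ — is where the argument breaks down, and your sketch does not close it. Concretely: in that case you want to invoke the theorem for $\bar A$ and then transport the grade back up by one via a superficial element. But (a) the superficial element $y$ needed for $G_{I^l}$ depends on $l$ (an $I$-superficial element is not in $I^l$ for $l\geqslant2$, so it is not even a candidate), hence $\bar A$, $\bar I$, $N$, $\xi_{\bar I}(N)$ and $\operatorname{amp}_{\bar I}(N)$ all potentially vary with $l$, making the inductive invocation ``for $l>\operatorname{amp}_{\bar I}(N)$'' self-referential; (b) the joint induction on $(\text{dimension},\text{cohomological level})$ that is supposed to feed the companion vanishing fact into the injectivity claim is circular at the top level $i=u$ unless $\xi_{\bar I}(N)\geqslant u$, which you have no way to guarantee; and (c) most tellingly, nowhere in your proposal does the hypothesis $\operatorname{height}(I)\geqslant\dim(A)-2$ do work beyond being formally inherited by $\bar A$ — yet the theorem is expected to fail when $\dim(A/I)\geqslant3$ (the authors say as much after Theorem~\ref{xi-2}), so a correct proof must exploit the bound substantively somewhere, as the paper does through tameness. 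Without an identified mechanism by which your induction actually uses $\dim(A/I)\leqslant2$, the borderline case cannot be considered handled. Your reductions to the extremes $u=1$ and $u=g$, and the Veronese step reducing everything to $H^u(L^I(M))_{-1}\neq0$, are fine; the middle of the argument is the missing piece.
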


\begin{proof}
	Set $ u := \xi_{I}(M) $. By virtue of Theorem~\ref{1p}, $ \operatorname{grade}(G_{I^l}(A)_+, G_{I^l}(M)) \geqslant u $ for every $ l > \operatorname{amp}_{I}(M) $. If possible, suppose that $ \operatorname{grade}(G_{I^l}(A)_+, G_{I^l}(M)) > u $  for some $ l > \operatorname{amp}_{I}(M) $. Then, in view of Corollary~\ref{count-sup}, $ H_{\mathcal{R}(I^l)_+}^u \big( L^{I^l}(M) \big) = 0 $. Thus, as in \eqref{vero-loc}, we obtain that $ H_{R_+}^u \left( L^{I}(M)(-1) \right)^{<l>} = 0 $, and hence
	\begin{equation}\label{hu0}
	H^{u}(L^{I}(M))_{nl-1} = 0 \mbox{ for all } n \in \mathbb{Z}.
	\end{equation}
	We note that $ u < \operatorname{grade}(G_{I^l}(A)_+, G_{I^l}(M)) \leqslant g $; see Lemma~\ref{hilbSyz}.
	
	The long exact sequence corresponding to \eqref{1st} provides an exact sequence:
	\begin{equation}\label{uuu}
	H^{u-1}\left( L^I(M) \right)(-1) \to H^u(G_I(M)) \to H^u(L^I(M)) \to H^u\left( L^I(M) \right)(-1) .
	\end{equation}
	Since $ u = \xi_{I}(M) $, it follows from the definition of $ \xi_{I}(M) $ that $ H^{u-1}(L^I(M))_n = 0 $ for all $ n \ll 0 $. Therefore \eqref{hu0} and \eqref{uuu} yield that $ H^u(G_I(M))_{nl-1} = 0 $ for all $ n \ll 0 $. Hence, since $ H^u(G_I(M)) $ is tame (due to \cite[Lemma~4.3]{Bb}), there exists some $ c < 0 $ such that $ H^u(G_I(M))_j = 0 $ for all $ j \leqslant c $. (Note that $ \dim(A/I) \leqslant \dim(A) - \operatorname{height}(I) \leqslant 2 $, and $ G_I(M) $ is a finitely generated graded $ G_I(A) $-module).
	
	Since $ H^u(G_I(M))_j = 0 $ for all $ j \leqslant c $, \eqref{uuu} produces an exact sequence
	\begin{equation}
	0 \longrightarrow H^u(L^I(M))_j \longrightarrow H^u(L^I(M))_{j-1} \mbox{ for every } j \leqslant c.
	\end{equation}
	Therefore, if $ m, n \leqslant c $ are integers such that $ m \leqslant n $, then $ H^u(L^I(M))_n $ can be considered as a submodule of $ H^u(L^I(M))_m $. Using this fact and \eqref{hu0}, one can prove that	$ H^u(L^I(M))_j = 0 $ for all $ j \leqslant n'l $, where $ n' $ is a fixed integer such that $ n'l \leqslant c $. Thus we have $ H^u(L^I(M))_j = 0 $ for all $ j \ll 0 $, and $ H^u(L^I(M))_{-1} = 0 $ by \eqref{hu0}. This contradicts that $ u = \xi_{I}(M) < g $. Therefore $ \operatorname{grade}(G_{I^l}(A)_+, G_{I^l}(M)) = \xi_{I}(M)  $ for every $ l > \operatorname{amp}_{I}(M) $.
\end{proof}

\section{On the sets $ \operatorname{Ass}^{\infty}_I(M) $ and $ T^\infty_1(I, M) $}\label{sec4}

Let $ (A,\mathfrak{m}) $ be a local ring. Let $ I $ be an ideal of $ A $, and $ M $ be a finitely generated $ A $-module. By a result of Brodmann \cite{Mb0}, there exists $ n_0 $ such that $ \operatorname{Ass}_A(M/I^nM) = \operatorname{Ass}_A(M/I^{n_0}M)$ for all $ n \geqslant n_0 $. The eventual constant set (i.e., $ \operatorname{Ass}_A(M/I^{n_0}M) $) is denoted by $ \operatorname{Ass}^{\infty}_I(M) $. In \cite[Theorem~1]{MS}, Melkersson and Schenzel generalized Brodmann's result by proving that for every fixed $ i \geqslant 0 $, the set $ \operatorname{Ass}_A \left( \operatorname{Tor}^A_i(M, A/I^n) \right) $ is constant for all $ n \gg 0 $. We denote this stable value by $ T^\infty_i(I, M) $. Note that $ \operatorname{Ass}^{\infty}_I(M) $ is nothing but $ T^\infty_0(I, M) $. In this section, we mainly study the question that when does $ \mathfrak{m} \in \operatorname{Ass}^{\infty}_I(M) $ (resp. $ T^\infty_1(I, M) $)? Our first result in this direction is regarding the set $ \operatorname{Ass}^{\infty}_I(M) $.

\begin{theorem}\label{sir1}
	Let $(A,\mathfrak{m})$ be a Cohen-Macaulay local, non-regular ring, and $L$ be an MCM $A$-module. Suppose $ M = \operatorname{Syz}^A_1(L) ~(\neq 0) $, and $ M_P $ is free for every $ P \in \operatorname{Spec}(A) \smallsetminus \{ \mathfrak{m}  \}$. Let $ I $ be an ideal of $ A $ such that $ \operatorname{grade}(G_{I^n}(A)_+, G_{I^n}(A)) \geqslant 2 $ for all $ n \gg 0 $.
	\[
	\text{If} \ \mathfrak{m} \notin \operatorname{Ass}^{\infty}_I(M), \text{ then } \operatorname{grade}( G_{I^n}(A)_+, G_{I^n}(M)) \geqslant 2  \text{ for every } n > \operatorname{amp}_I(M).
	\]
\end{theorem}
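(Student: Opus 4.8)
The plan is to reduce the conclusion to the single vanishing $H^1(L^I(M))_{-1}=0$: by Corollary~\ref{id} this is \emph{equivalent} to $\operatorname{grade}(G_{I^n}(A)_+,G_{I^n}(M))\geqslant 2$ for all $n>\operatorname{amp}_I(M)$, so that is exactly what must be proved. First I would set up the standing data. Fix a free cover $0\to M\to F\to L\to 0$. Since $F$ is MCM and $L$ is MCM, $M=\operatorname{Syz}^A_1(L)$ is MCM; moreover the hypothesis $\operatorname{grade}(G_{I^n}(A)_+,G_{I^n}(A))\geqslant 2$ forces $\operatorname{grade}(I)\geqslant 2$ by Lemma~\ref{hilbSyz}, and from the short exact sequence one gets $\operatorname{grade}(I,M)\geqslant 2>0$, so Hypothesis~\ref{hyp-sec-3} is in force for $M$. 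Next, for every prime $P\neq\mathfrak m$ the module $M_P$ is free, so $L_P$ has finite projective dimension; being also a maximal Cohen-Macaulay $A_P$-module it is free, whence $\operatorname{Tor}^A_1(L,A/I^n)$ is supported only at $\mathfrak m$ and therefore has finite length. From the $\operatorname{Tor}$ long exact sequence of $0\to M\to F\to L\to 0$ tensored with $A/I^n$ (and $\operatorname{Tor}^A_1(F,-)=0$) we get an embedding $\operatorname{Tor}^A_1(L,A/I^n)\hookrightarrow M/I^nM$. Hence, since $\mathfrak m\notin\operatorname{Ass}^\infty_I(M)$ means $H^0_{\mathfrak m}(M/I^nM)=0$ for $n\gg 0$, the finite-length module $\operatorname{Tor}^A_1(L,A/I^n)$ must vanish for all $n\gg 0$.

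Now assemble the fundamental sequence. Tensoring $0\to M\to F\to L\to 0$ with $A/I^{n+1}$ and summing over $n\geqslant 0$ yields an exact sequence of $\mathcal R(I)$-modules
\[
0\longrightarrow \mathcal T\longrightarrow L^I(M)\longrightarrow L^I(F)\longrightarrow L^I(L)\longrightarrow 0,\qquad \mathcal T:=\bigoplus_{n\geqslant 0}\operatorname{Tor}^A_1(L,A/I^{n+1}),
\]
which one checks directly is a sequence of graded $\mathcal R(I)$-modules (with $\mathcal T$ identified with $\bigoplus_n (I^{n+1}F\cap M)/I^{n+1}M\subseteq L^I(M)$). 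By the previous paragraph $\mathcal T$ is concentrated in finitely many (non-negative) degrees, hence is $R_+$-torsion, so $H^0(\mathcal T)=\mathcal T$ and $H^i(\mathcal T)=0$ for all $i\geqslant 1$. Split the above into $0\to\mathcal T\to L^I(M)\to Q\to 0$ and $0\to Q\to L^I(F)\to L^I(L)\to 0$.

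Finally, bring in the hypothesis on $A$. Since $\operatorname{grade}(G_{I^n}(A)_+,G_{I^n}(A))\geqslant 2$ for some $n$, Corollary~\ref{id} gives $H^1(L^I(A))_{-1}=0$, and then Lemma~\ref{crucial} gives $H^1(L^I(A))_j=0$ for all $j\leqslant -1$; tensoring with the free module $F$, the same holds for $H^1(L^I(F))$. Also $L^I(F)$ and $L^I(L)$ are concentrated in degrees $\geqslant 0$, so $H^0(L^I(F))_j=H^0(L^I(L))_j=0$ for $j<0$. Chasing the long exact cohomology sequence of $0\to Q\to L^I(F)\to L^I(L)\to 0$ in degrees $j\leqslant -1$ then squeezes $H^1(Q)_j$ between $H^0(L^I(L))_j=0$ and $H^1(L^I(F))_j=0$, so $H^1(Q)_j=0$ for all $j\leqslant -1$. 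Then the long exact sequence of $0\to\mathcal T\to L^I(M)\to Q\to 0$ in degree $-1$ reads $H^1(\mathcal T)_{-1}\to H^1(L^I(M))_{-1}\to H^1(Q)_{-1}$ with both outer terms zero, whence $H^1(L^I(M))_{-1}=0$. By Corollary~\ref{id} again, $\operatorname{grade}(G_{I^n}(A)_+,G_{I^n}(M))\geqslant 2$ for every $n>\operatorname{amp}_I(M)$.

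The crux — and the step I expect to be the main obstacle to get right — is the translation of $\mathfrak m\notin\operatorname{Ass}^\infty_I(M)$ into the eventual vanishing of $\operatorname{Tor}^A_1(L,A/I^n)$; this is exactly where the syzygy hypothesis on $M$ and local freeness on the punctured spectrum are used, and it is what makes $\mathcal T$ have vanishing higher local cohomology. Once that is in place, everything else is bookkeeping with the two short exact sequences that transfers the ``grade $\geqslant 2$'' property from $A$ (equivalently from $F$) to $M$.
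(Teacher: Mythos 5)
Your proof is correct, and it implements the same overall strategy as the paper's proof but by a genuinely different route in the second half. The first stage is shared: from the syzygy sequence $0\to M\to F\to L\to 0$, local freeness off $\mathfrak m$, and $\mathfrak m\notin\operatorname{Ass}^\infty_I(M)$, deduce $\operatorname{Tor}^A_1(L,A/I^n)=0$ for $n\gg0$. Where the paper then passes, for each large fixed $n$, to the Veronese $\mathcal R(I^n)$ to get honest short exact sequences $0\to L^{I^n}(M)(-1)\to L^{I^n}(F)(-1)\to L^{I^n}(L)(-1)\to 0$, uses Corollary~\ref{thm-xi-1} together with Corollary~\ref{count-sup} to get $H^0(L^{I^n}(L))=0$, shows $H^0$ and $H^1$ of $L^{I^n}(M)$ vanish, and finally invokes Corollary~\ref{id}~(iii)$\Rightarrow$(ii), you instead stay at the level of $\mathcal R(I)$: you keep the Tor-correction as a graded $R_+$-torsion module $\mathcal T$ (torsion because it is concentrated in finitely many degrees), feed $H^1(L^I(A))_{-1}=0$ (Corollary~\ref{id} for $A$) into Lemma~\ref{crucial} to kill $H^1(L^I(F))_j$ for all $j\leqslant-1$, need only the trivial fact that $L^I(L)$ sits in nonnegative degrees (replacing the appeal to Corollary~\ref{thm-xi-1}), chase the two short exact sequences to get $H^1(L^I(M))_{-1}=0$, and conclude by Corollary~\ref{id}~(i)$\Rightarrow$(ii). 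Your version avoids the Veronese step and is a bit more economical in the auxiliary inputs it uses, at the price of one extra application of Lemma~\ref{crucial}; both are valid.
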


\begin{proof}
	Since $ L $ is an MCM $ A $-module, every $ A $-regular element is $ L $-regular. By virtue of Lemma~\ref{hilbSyz}, from the given hypotheses, it follows that $ \operatorname{grade}(I,A) > 0 $, and hence $ \operatorname{grade}(I,L) > 0 $. So, by Corollary~\ref{thm-xi-1}, $ \operatorname{grade}( G_{I^n}(A)_+, G_{I^n}(L)) \geqslant 1 $ for all $ n \gg 0 $. Therefore, in view of  Corollary~\ref{count-sup}, we obtain that
	\begin{equation}\label{0L0}
	H_{\mathcal{R}(I^n)_+}^0 \left( L^{I^n}(L) \right) = 0 \quad \mbox{for all } n \gg 0.
	\end{equation}
	Note that $ M $ is an MCM $ A $-module. So as above $ \operatorname{grade}(I, M) > 0 $. 
	
	We have a short exact sequence 
	\begin{equation}\label{io}
	0 \longrightarrow M \longrightarrow F \longrightarrow L \longrightarrow 0,
	\end{equation}	
	where $ F $ is a free $A$-module. For every $ n $, by applying $ (A/{I^n}) \otimes_A - $ on \eqref{io}, we obtain an exact sequence:
	\begin{equation}\label{io1}
	0 \longrightarrow \operatorname{Tor}^A_1(A/{I^n}, L) \longrightarrow {M}/{I^nM} \longrightarrow {F}/{I^nF} \longrightarrow {L}/{I^nL} \longrightarrow 0.
	\end{equation}
	For every $ P \in \operatorname{Spec}(A) \smallsetminus \{ \mathfrak{m}  \}$, since $ M_P $ is free, we get that $ L_P $ is free, and hence  $ \operatorname{Tor}^A_1(A/{I^n}, L)_P = 0$. So $ \operatorname{Ass}_A \left( \operatorname{Tor}^A_1(A/{I^n}, L) \right) \subseteq \{ \mathfrak{m} \} $ for every $ n $. Therefore, since $ \mathfrak{m} \notin \operatorname{Ass}^{\infty}_I (M) $, in view of \eqref{io1}, it can be deduced that $ \operatorname{Ass}_A \left( \operatorname{Tor}^A_1(A/{I^n}, L) \right) = \phi $ (empty set) for all $ n \gg 0 $, and hence there is $ c' \geqslant 1 $ such that $ \operatorname{Tor}^A_1(A/{I^n}, L) = 0 $ for every $ n \geqslant c' $. Thus \eqref{io1} yields an exact sequence:
	\begin{equation}\label{io2}
	0 \longrightarrow {M}/{I^nM} \longrightarrow {F}/{I^nF} \longrightarrow {L}/{I^nL} \longrightarrow 0
	\end{equation}
	for every $ n \geqslant c' $. In particular, for every $ n \geqslant c' $, we have short exact sequences:
	\begin{equation*}
	0 \longrightarrow {M}/{I^{nk}M} \longrightarrow {F}/{I^{nk}F} \longrightarrow {L}/{I^{nk}L} \longrightarrow 0
	\end{equation*}
	for all $ k \geqslant 1 $, which induce an exact sequence of $ \mathcal{R}(I) $-modules:
	\begin{equation}\label{io3}
	0\longrightarrow L^{I^n}(M)(-1) \longrightarrow L^{I^n}(F)(-1) \longrightarrow L^{I^n}(L)(-1) \longrightarrow 0.
	\end{equation}
	The corresponding long exact sequence of local cohomology modules yields
	\begin{align}\label{io4}
	0 \longrightarrow & H_{\mathcal{R}(I^n)_+}^0 \left( L^{I^n}(M) \right) \longrightarrow H_{\mathcal{R}(I^n)_+}^0 \left( L^{I^n}(F) \right) \longrightarrow H_{\mathcal{R}(I^n)_+}^0 \left( L^{I^n}(L) \right)\\
	\longrightarrow & H_{\mathcal{R}(I^n)_+}^1 \left( L^{I^n}(M) \right) \longrightarrow H_{\mathcal{R}(I^n)_+}^1 \left( L^{I^n}(F) \right). \nonumber
	\end{align}
	Since $ \operatorname{grade}(G_{I^n}(A)_+, G_{I^n}(A)) \geqslant 2 $ for all $ n \gg 0 $, by virtue of Corollary~\ref{count-sup}, we get that $ H_{\mathcal{R}(I^n)_+}^i \left( L^{I^n}(A) \right) = 0 $ for $ i = 0, 1 $, and for all $ n \gg 0 $. Therefore
	\begin{equation}\label{io5}
	H_{\mathcal{R}(I^n)_+}^0 \left( L^{I^n}(F) \right) = 0 = H_{\mathcal{R}(I^n)_+}^1 \left( L^{I^n}(F) \right) \quad \mbox{for all } n \gg 0.
	\end{equation}
	It follows from \eqref{0L0}, \eqref{io4} and \eqref{io5} that
	\begin{equation*}
	H_{\mathcal{R}(I^n)_+}^0 \left( L^{I^n}(M) \right) = 0 = H_{\mathcal{R}(I^n)_+}^1 \left( L^{I^n}(M) \right) \quad \mbox{for all } n \gg 0.
	\end{equation*}
	Hence, in view of Corollaries~\ref{count-sup} and \ref{id}, $ \operatorname{grade}(G_{I^n}(A)_+, G_{I^n}(M)) \geqslant 2 $ for every $ n > \operatorname{amp}_I(M) $, which completes the proof of the theorem.
\end{proof}

We now give

\begin{proof}[Proof of Theorem~\ref{first}]
	This follows from Theorem~\ref{sir1} and Theorem~\ref{RR}.
\end{proof}

The following result gives a variation of Theorem~\ref{sir1}.

\begin{theorem}\label{var-sir1}
	Let $ (A,\mathfrak{m}) $ be a Cohen-Macaulay local ring of dimension $ d \geqslant 3 $. Set $ M := \operatorname{Syz}^A_1(L)$ for some MCM $A$-module $ L $. Let $ I $ be a locally complete intersection ideal of $A$ with $ \operatorname{height}(I) = d - 1 $, the analytic spread $ l(I) = d $, and $ \operatorname{grade}(G_{I^n}(A)_+, G_{I^n}(A)) \geqslant 2 $ for all $ n \gg 0 $. We have that
	\[
		\text{if} \ \mathfrak{m} \notin \operatorname{Ass}^{\infty}_I(M), \text{ then } \operatorname{grade}( G_{I^n}(A)_+, G_{I^n}(M)) \geqslant 2  \text{ for every } n > \operatorname{amp}_I(M).
	\]
\end{theorem}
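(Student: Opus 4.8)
The plan is to run the proof of Theorem~\ref{sir1} essentially unchanged, replacing its single use of the hypothesis ``$M_P$ is free for all $P \neq \mathfrak{m}$'' — which there served only to force $\operatorname{Tor}^A_1(A/I^n, L)$ to be supported at $\mathfrak{m}$ — by an argument exploiting instead that $I$ is a locally complete intersection ideal of height $d-1$ and that $L$ is MCM. The hypothesis $l(I) = d$ is not actually consumed by the argument; it, like $\operatorname{height}(I) = d-1$, serves only to place us in the regime where $\mathfrak{m} \notin \operatorname{Ass}^{\infty}_I(M)$ is exceptional behaviour.

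First I would carry over the formal set-up of Theorem~\ref{sir1}. From $\operatorname{grade}(G_{I^n}(A)_+, G_{I^n}(A)) \geqslant 2$ and Lemma~\ref{hilbSyz} one gets $\operatorname{grade}(I, A) \geqslant 2$; since $L$ and $M := \operatorname{Syz}^A_1(L)$ are MCM, every $A$-regular element is $L$- and $M$-regular, so $\operatorname{grade}(I, L)$ and $\operatorname{grade}(I, M)$ are positive. Corollary~\ref{thm-xi-1} together with Corollary~\ref{count-sup} give $H^0_{\mathcal{R}(I^n)_+}(L^{I^n}(L)) = 0$ for $n \gg 0$. Tensoring the defining sequence $0 \to M \to F \to L \to 0$ ($F$ free) with $A/I^n$ produces the four-term exact sequence with $\operatorname{Tor}^A_1(A/I^n, L)$ on the left, exactly as in the proof of Theorem~\ref{sir1}.

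The one new point is the claim that $\operatorname{Ass}_A(\operatorname{Tor}^A_1(A/I^n, L)) \subseteq \{\mathfrak{m}\}$ for every $n$. Fix $P \in \operatorname{Spec}(A) \smallsetminus \{\mathfrak{m}\}$. If $P \not\supseteq I$ then $I^n_P = A_P$ and the Tor localizes to $0$. If $P \supseteq I$, then as $A$ is Cohen-Macaulay (hence catenary and equidimensional) of dimension $d$ with $\operatorname{height}(I) = d-1$, any prime $Q$ with $I \subseteq Q \subseteq P$ satisfies $d-1 \leqslant \operatorname{height}(Q) \leqslant \operatorname{height}(P) \leqslant d-1$, forcing $Q = P$; thus $I_P$ is $PA_P$-primary with $\operatorname{height}(I_P) = \dim A_P = d-1$. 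Since $I$ is locally complete intersection, $I_P$ is generated by an $A_P$-regular sequence $\underline{x}$ of length $d-1$; and since $L$ is MCM over $A$, $L_P$ is MCM over $A_P$, so $\operatorname{depth}_{A_P}(L_P) = d-1$ and $\underline{x}$ is $L_P$-regular. Hence $\operatorname{Tor}^{A_P}_i(A_P/I_P, L_P) = H_i(\underline{x}; L_P) = 0$ for all $i > 0$. As $G_{I_P}(A_P)$ is a polynomial ring over $A_P/I_P$, each $I_P^n/I_P^{n+1}$ is a free $A_P/I_P$-module, so feeding the sequences $0 \to I_P^n/I_P^{n+1} \to A_P/I_P^{n+1} \to A_P/I_P^n \to 0$ into an induction on $n$ gives $\operatorname{Tor}^{A_P}_1(A_P/I_P^n, L_P) = 0$ for all $n$, which proves the claim.

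From here the argument closes as in Theorem~\ref{sir1}: since $\operatorname{Tor}^A_1(A/I^n, L)$ embeds into $M/I^nM$ and $\mathfrak{m} \notin \operatorname{Ass}^{\infty}_I(M)$, its associated primes (a subset of $\{\mathfrak{m}\}$) are empty for $n \gg 0$, so $\operatorname{Tor}^A_1(A/I^n, L) = 0$ for $n \geqslant c'$. Then $0 \to M/I^nM \to F/I^nF \to L/I^nL \to 0$ is exact for $n \geqslant c'$, and applying this to all powers $I^{nk}$ gives the exact sequence $0 \to L^{I^n}(M)(-1) \to L^{I^n}(F)(-1) \to L^{I^n}(L)(-1) \to 0$ of $\mathcal{R}(I)$-modules. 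The hypothesis $\operatorname{grade}(G_{I^n}(A)_+, G_{I^n}(A)) \geqslant 2$ gives, via Corollary~\ref{count-sup} applied to $A$ and hence to the free module $F$, that $H^0_{\mathcal{R}(I^n)_+}(L^{I^n}(F)) = H^1_{\mathcal{R}(I^n)_+}(L^{I^n}(F)) = 0$ for $n \gg 0$; together with $H^0_{\mathcal{R}(I^n)_+}(L^{I^n}(L)) = 0$, the long exact sequence of local cohomology forces $H^0_{\mathcal{R}(I^n)_+}(L^{I^n}(M)) = H^1_{\mathcal{R}(I^n)_+}(L^{I^n}(M)) = 0$ for $n \gg 0$, and Corollaries~\ref{count-sup} and \ref{id} then yield $\operatorname{grade}(G_{I^n}(A)_+, G_{I^n}(M)) \geqslant 2$ for all $n > \operatorname{amp}_I(M)$. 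The only genuinely new work is the Tor-support computation; its only friction points are the standard facts that MCM modules over a Cohen-Macaulay local ring localize to MCM modules and that $G_{J}(R)$ is a polynomial ring over $R/J$ when $J$ is generated by an $R$-regular sequence.
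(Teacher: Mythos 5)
Your proposal is correct and follows essentially the same line as the paper's own proof: localize at a prime $P \neq \mathfrak{m}$, note that the only nontrivial case is when $P$ is minimal over $I$, and use the locally-complete-intersection and MCM hypotheses to kill $\operatorname{Tor}^A_1(A/I^n,L)_P$, after which the rest is word-for-word the proof of Theorem~\ref{sir1}. The one cosmetic difference is that where the paper invokes \cite[Remark~20]{hilbert} to get $\operatorname{Tor}^A_1(A/I^n,L)_P = 0$ for $n \gg 0$, you give a self-contained argument (Koszul homology plus freeness of $I_P^n/I_P^{n+1}$ over $A_P/I_P$ and induction on $n$), which in fact yields the vanishing for all $n \geqslant 1$.
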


\begin{remark}
	See \cite[2.2]{HHF} for cases when the hypotheses on the ideal $ I $  are satisfied.
\end{remark}

\begin{proof}[Proof of Theorem~\ref{var-sir1}]
	We claim that $ \operatorname{Tor}^A_1(A/{I^n}, L) $ has finite length for all $ n \gg 0 $. To show this, consider $ P \in \operatorname{Spec}(A) \smallsetminus \{ \mathfrak{m} \} $. If $ L_P $ is free, then $ \operatorname{Tor}^A_1(A/{I^n}, L)_P = 0 $. So we may assume that $ L_P $ is not free. If $ I \nsubseteq P $, then also $ \operatorname{Tor}^A_1(A/{I^n}, L)_P = 0 $ for every $ n \geqslant 1 $. So we assume that $ I \subseteq P $. Since $ \operatorname{height}(I) = d - 1 $ and $ P \neq \mathfrak{m} $, we have $ \operatorname{height}(P) = d - 1 $, and hence $ P $ is minimal over $ I $. (So there are finitely many such prime ideals). Note that $ I_P $ is a $ P A_P $-primary ideal of $ A_P $. Since $ I $ is locally complete intersection, $ I_P $ is generated by an $ A_P $-regular sequence of length $ d - 1 $. Therefore, in view of \cite[Remark~20]{hilbert}, we obtain that $ \operatorname{Tor}^A_1(A/{I^n}, L)_P = 0 $ for all $ n \gg 0 $. Hence $ \operatorname{Tor}^A_1(A/{I^n}, L) $ has finite length for all $ n \gg 0 $. Now, along with the same arguments as in the proof of Theorem~\ref{sir1}, it follows that $\operatorname{grade}(G_{I^n}(A)_+, G_{I^n}(M))\geqslant2$ for every $ n > \operatorname{amp}_I(M) $.
\end{proof}
\s 
({\it MCM approximations and an invariant of modules}).
Let $(A,\mathfrak{m})$ be a Gorenstein local ring. Consider a finitely generated $ A $-module $ M $. By virtue of \cite[Theorem~A]{AB89}, there is an MCM approximation of $ M $, i.e., a short exact sequence $ s : 0 \rightarrow Y \rightarrow X \rightarrow M \rightarrow 0 $ of $ A $-modules, where $ X $ is MCM and $ Y $ has finite injective dimension (equivalently, $ Y $ has finite projective dimension since $ A $ is Gorenstein). We say that $ X $ is an MCM approximation of $ M $. In view of \cite[Theorem~B]{AB89}, if $ s^\prime \colon 0 \rightarrow Y^\prime \rightarrow X^\prime \rightarrow M \rightarrow 0$ is another MCM approximation of $ M $, then $ X $ and $ X^\prime $ are stably isomorphic, i.e., there exist finitely generated free $ A $-modules $ F $ and $ G $ such that $ X \oplus F \cong X^\prime \oplus G $, and hence $ \operatorname{Syz}^A_1(X) \cong \operatorname{Syz}^A_1(X') $. Thus $ \operatorname{Syz}^A_1(X) $ is an invariant of $ M $. Note that $ \operatorname{Syz}^A_1(X) = 0 $ if and only if $ \operatorname{projdim}_A(M) $ is finite.

We use the following lemma to prove our result on $ T^\infty_1(I, M) $.

\begin{lemma}\label{oi}
	Let $ (A,\mathfrak{m}) $ be a Gorenstein local ring of dimension $ d $. Suppose $ A $ has isolated singularity. Let $ I $ be a normal ideal of $ A $ such that $ l(I) < d $. Let $ M $ be a Cohen-Macaulay $ A $-module of dimension $ d - 1 $, and $ \operatorname{projdim}_A(M) = \infty $. Let $ X_M $ be an MCM approximation of $ M $. Then the following statements are equivalent:
	\begin{enumerate}[{\rm (i)}]
		\item $ \mathfrak{m} \notin T^\infty_1(I, M) $ {\rm(}i.e., $ \mathfrak{m} \notin \operatorname{Ass}_A(\operatorname{Tor}^A_1(M,A/{I^n})) $ for all $ n \gg 0 ${\rm )}.
		\item $ \operatorname{Tor}^A_1(X_M, A/{I^n}) = 0 $ for all $ n \gg 0 $.
	\end{enumerate}
\end{lemma}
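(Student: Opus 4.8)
The plan is to reduce both implications to the long exact sequence of $\operatorname{Tor}$ attached to the MCM approximation, once the structure of the modules in that approximation is understood.

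First I would pin down the approximation $s\colon 0 \to Y \to X_M \to M \to 0$. Since $X_M$ is MCM and $M$ is Cohen-Macaulay of dimension $d-1$, the depth lemma for short exact sequences gives $\operatorname{depth}_A Y \geqslant \min\{\operatorname{depth}_A X_M,\ \operatorname{depth}_A M + 1\} = d$, while $Y \subseteq X_M$ forces $\dim_A Y \leqslant d$; hence $Y$ is MCM. Moreover $Y \neq 0$: otherwise $M \cong X_M$ would be MCM of dimension $d$, contradicting $\dim_A M = d-1$ (note $M \neq 0$ as $\operatorname{projdim}_A M = \infty$). Because $A$ is Gorenstein, $Y$ has finite projective dimension, so Auslander--Buchsbaum yields $\operatorname{projdim}_A Y = d - \operatorname{depth}_A Y = 0$; that is, $Y \cong A^r$ for some $r \geqslant 1$. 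Separately, since $A$ has isolated singularity and $X_M$ is MCM, $X_M$ is free on the punctured spectrum (localizing, an MCM module over the Cohen-Macaulay ring $A$ stays MCM, and over the regular ring $A_{\mathfrak{p}}$ with $\mathfrak{p} \neq \mathfrak{m}$ it is therefore free), so $\operatorname{Tor}^A_1(X_M, A/I^n)$ has finite length for every $n$.

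Next I would apply $- \otimes_A A/I^n$ to $0 \to A^r \to X_M \to M \to 0$. As $A^r$ is free, $\operatorname{Tor}^A_1(A^r, A/I^n) = 0$, so the long exact sequence produces an exact sequence
\[
0 \longrightarrow \operatorname{Tor}^A_1(X_M, A/I^n) \longrightarrow \operatorname{Tor}^A_1(M, A/I^n) \longrightarrow (A/I^n)^r
\]
for all $n$. For (i) $\Rightarrow$ (ii): fix $n \gg 0$ with $\mathfrak{m} \notin \operatorname{Ass}_A(\operatorname{Tor}^A_1(M,A/I^n))$; the embedding gives $\operatorname{Ass}_A(\operatorname{Tor}^A_1(X_M,A/I^n)) \subseteq \operatorname{Ass}_A(\operatorname{Tor}^A_1(M,A/I^n))$, while finite length gives $\operatorname{Ass}_A(\operatorname{Tor}^A_1(X_M,A/I^n)) \subseteq \{\mathfrak{m}\}$, so this set is empty and $\operatorname{Tor}^A_1(X_M,A/I^n) = 0$. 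For (ii) $\Rightarrow$ (i): when $\operatorname{Tor}^A_1(X_M,A/I^n) = 0$ the sequence embeds $\operatorname{Tor}^A_1(M,A/I^n)$ into $(A/I^n)^r$, whence $\operatorname{Ass}_A(\operatorname{Tor}^A_1(M,A/I^n)) \subseteq \operatorname{Ass}_A(A/I^n)$; since $I$ is normal with $l(I) < d$, the Ratliff--McAdam characterization recalled in the introduction gives $\mathfrak{m} \notin \operatorname{Ass}_A(A/I^n)$ for $n \gg 0$, so $\mathfrak{m} \notin \operatorname{Ass}_A(\operatorname{Tor}^A_1(M,A/I^n))$ for $n \gg 0$.

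The argument carries no serious analytic content once the approximation sequence is in hand; the step I would be most careful about is the structural one — verifying that $Y$ is genuinely free and nonzero (so that the target $(A/I^n)^r$ is a nonzero free module and $\operatorname{Ass}$ behaves as expected) and that $X_M$ is free on the punctured spectrum — together with invoking in the correct direction the statement that, for a normal ideal $I$, one has $\mathfrak{m} \in \operatorname{Ass}^{\infty}_I(A)$ if and only if $l(I) = d$.
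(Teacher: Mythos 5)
Your proof is correct and follows essentially the same route as the paper: reduce to the short exact sequence $0 \to F \to X_M \to M \to 0$ (with $F$ free), apply $- \otimes_A A/I^n$, then use finite length of $\operatorname{Tor}^A_1(X_M, A/I^n)$ for one direction and McAdam's $l(I)<d$ criterion for the other. The only difference is that you spell out the depth-lemma/Auslander--Buchsbaum argument showing the kernel of the approximation is free, whereas the paper simply asserts it; this is a detail the paper leaves implicit, not a different approach.
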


\begin{proof}
	Since $ X_M $ is an MCM approximation of $ M $, and $ \operatorname{depth}(M) \geqslant d - 1 $, there is a short exact sequence $ 0 \to F \to X_M \to M \to 0 $, where $ F $ is a free $ A $-module. The corresponding long exact sequences of Tor-modules yield an exact sequence
	\begin{align}\label{3b}
	0 \longrightarrow \operatorname{Tor}^A_1(X_M,A/{I^n}) \longrightarrow \operatorname{Tor}^A_1(M,A/{I^n}) & \longrightarrow \\
	F/{I^nF} \longrightarrow {X_M}/{I^nX_M} \longrightarrow M/{I^nM} & \longrightarrow 0 \quad \mbox{for every } n \geqslant 1.\nonumber
	\end{align}
	
	(i) $ \Rightarrow $ (ii): Since $ A $ has isolated singularity, it follows that $ (X_M)_P $ is free $ A_P $-module for every $ P \in \operatorname{Spec}(A) \smallsetminus \{ \mathfrak{m} \}$. So $ \operatorname{Tor}^A_1(X_M, A/{I^n}) $ has finite length, and hence $ \operatorname{Ass}_A\left( \operatorname{Tor}^A_1(X_M, A/{I^n}) \right) \subseteq \{ \mathfrak{m} \} $ for every $ n \geqslant 1 $. Therefore, since $ \mathfrak{m} \notin T^\infty_1(I, M) $, in view of \eqref{3b}, we obtain that $ \operatorname{Ass}_A\left( \operatorname{Tor}^A_1(X_M, A/{I^n}) \right) = \phi $ for all $ n \gg 0 $, which implies that $ \operatorname{Tor}^A_1(X_M, A/{I^n}) = 0 $ for all $ n \gg 0 $.
	
	(ii) $ \Rightarrow $ (i): Since $ I $ is normal, and $ l(I) < d $, by virtue of \cite[Proposition~4.1]{Mcada06}, we have $ \mathfrak{m} \notin \operatorname{Ass}^\infty_I(A) $. In view of \eqref{3b}, since $ \operatorname{Tor}^A_1(X_M, A/{I^n}) = 0 $ for all $ n \gg 0 $, it follows that $ T^\infty_1(I, M) \subseteq \operatorname{Ass}^\infty_I(A) $, and hence $ \mathfrak{m} \notin T^\infty_1(I, M) $.
\end{proof}

The following theorem provides us a necessary and sufficient condition for `$ \mathfrak{m} \in T^\infty_1(I, M) $' on certain class of ideals and modules over a Gorenstein local ring.

\begin{theorem}\label{sir2}
	Let $ (A,\mathfrak{m}) $ be an excellent Gorenstein local ring of dimension $ d $. Suppose $ A $ has isolated singularity. Let $ I $ be a normal ideal of $A$ with $ \operatorname{height}(I) \geqslant 2 $ and $ l(I) < d $. Let $ M $ be a Cohen-Macaulay $ A $-module of dimension $ d - 1 $ and $ \operatorname{projdim}_A(M) = \infty $. Let $ X_M $ be an MCM approximation of $ M $. Set $ N := \operatorname{Syz}^A_1(X_M) $. Then the following statements are equivalent:
	\begin{enumerate}[{\rm (i)}]
		\item $ \mathfrak{m} \notin T^\infty_1(I, M) $ {\rm(}i.e., $ \mathfrak{m} \notin \operatorname{Ass}_A(\operatorname{Tor}^A_1(M,A/{I^n})) $ for all $ n \gg 0 ${\rm )}.
		\item $ \mathfrak{m} \notin \operatorname{Ass}^\infty_I(N) $ {\rm(}equivalently, $ \operatorname{depth}(N/{I^nN}) \geqslant 1 $ for all $ n \gg 0 ${\rm )}.
	\end{enumerate}
	Furthermore, if this holds true, then $ \operatorname{grade}(G_{I^n}(A)_+, G_{I^n}(N)) \geqslant 2 $ for every $ n > \operatorname{amp}_I(N) $.
\end{theorem}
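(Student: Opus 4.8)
The plan is to funnel everything through Lemma~\ref{oi}, which already identifies statement (i) with the vanishing $\operatorname{Tor}^A_1(X_M, A/I^n) = 0$ for all $n \gg 0$, and then to read off statement (ii) from the syzygy presentation of $N$. Write $0 \to N \to F \to X_M \to 0$ for the exact sequence defining $N = \operatorname{Syz}^A_1(X_M)$, with $F$ finitely generated free. Two structural facts about $N$ are needed. First, since $\operatorname{projdim}_A(M) = \infty$ the module $X_M$ is not free, so $N \neq 0$, and being a first syzygy of an MCM module over a Cohen-Macaulay ring, $N$ is a nonzero MCM $A$-module; in particular any $A$-regular element is $N$-regular, so $\operatorname{grade}(I,N) \geqslant 1$. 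Second, since $A$ has isolated singularity, $(X_M)_P$ is $A_P$-free for every $P \in \operatorname{Spec}(A) \smallsetminus \{\mathfrak{m}\}$, hence (localizing the defining sequence, which then splits) $N_P$ is $A_P$-free for every such $P$. Tensoring the defining sequence with $A/I^n$ gives, for every $n$, the exact sequence
\[
0 \longrightarrow \operatorname{Tor}^A_1(X_M, A/I^n) \longrightarrow N/I^nN \longrightarrow F/I^nF \longrightarrow X_M/I^nX_M \longrightarrow 0,
\]
and by the local freeness just noted, $\operatorname{Tor}^A_1(X_M, A/I^n)$ has finite length for every $n$; the sequence exhibits it as a finite-length submodule of $N/I^nN$.

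For (ii) $\Rightarrow$ (i): if $\operatorname{depth}(N/I^nN) \geqslant 1$ for all $n \gg 0$, then $N/I^nN$ has no nonzero finite-length submodule, so its finite-length submodule $\operatorname{Tor}^A_1(X_M, A/I^n)$ vanishes for all $n \gg 0$; Lemma~\ref{oi} then gives $\mathfrak{m} \notin T^\infty_1(I, M)$. For (i) $\Rightarrow$ (ii): by Lemma~\ref{oi}, $\operatorname{Tor}^A_1(X_M, A/I^n) = 0$ for all $n \gg 0$, so for such $n$ the displayed sequence yields an embedding $N/I^nN \hookrightarrow F/I^nF$ into a finite free $A/I^n$-module. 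Because $I$ is normal with $l(I) < d$, McAdam's criterion \cite[Proposition~4.1]{Mcada06} gives $\mathfrak{m} \notin \operatorname{Ass}^\infty_I(A)$, i.e.\ $\operatorname{depth}(A/I^n) \geqslant 1$ and hence $\operatorname{depth}(F/I^nF) \geqslant 1$ for all $n \gg 0$; since an embedding of a nonzero module into a module of positive depth forces positive depth, $\operatorname{depth}(N/I^nN) \geqslant 1$ for all $n \gg 0$, that is $\mathfrak{m} \notin \operatorname{Ass}^\infty_I(N)$. The equivalence of (i) and (ii) follows.

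For the last assertion, assume (i)/(ii) holds and apply Theorem~\ref{sir1} with the MCM module $X_M$ in the role of $L$ and $N = \operatorname{Syz}^A_1(X_M)$ in the role of $M$. Its hypotheses are met: $A$ is Cohen-Macaulay and non-regular (as $\operatorname{projdim}_A(M) = \infty$), $N$ is a nonzero first syzygy of an MCM module that is free on the punctured spectrum, and $\operatorname{grade}(G_{I^n}(A)_+, G_{I^n}(A)) \geqslant 2$ for all $n \gg 0$ by Theorem~\ref{RR} (as $A$ is excellent Cohen-Macaulay, $I$ is normal, and $\operatorname{grade}(I) = \operatorname{height}(I) \geqslant 2$). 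Since $\mathfrak{m} \notin \operatorname{Ass}^\infty_I(N)$, Theorem~\ref{sir1} gives $\operatorname{grade}(G_{I^n}(A)_+, G_{I^n}(N)) \geqslant 2$ for every $n > \operatorname{amp}_I(N)$.

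The manipulations with the four-term exact sequence and the appeals to Theorems~\ref{sir1} and \ref{RR} are essentially bookkeeping. The points that genuinely need care are the two structural properties of $N$ used above — nonvanishing, maximal Cohen-Macaulayness, and local freeness on the punctured spectrum — which rely on the Gorenstein and isolated-singularity hypotheses together with a depth count along $0 \to N \to F \to X_M \to 0$; and the implication (i) $\Rightarrow$ (ii), where the hypothesis $l(I) < d$ enters essentially through McAdam's theorem (without $\mathfrak{m} \notin \operatorname{Ass}^\infty_I(A)$ the ambient module $F/I^nF$ could itself have depth $0$, and the embedding $N/I^nN \hookrightarrow F/I^nF$ would give nothing).
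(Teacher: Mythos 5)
Your proof is correct and follows the same route as the paper's: funnel statement (i) through Lemma~\ref{oi} into the vanishing of $\operatorname{Tor}^A_1(X_M, A/I^n)$, read off statement (ii) from the four-term sequence obtained by tensoring $0 \to N \to F \to X_M \to 0$ with $A/I^n$ (using McAdam's criterion to control $\operatorname{Ass}^\infty_I(A)$), and deduce the final grade bound from Theorems~\ref{RR} and \ref{sir1}. The only difference is cosmetic: you phrase the (i) $\Leftrightarrow$ (ii) equivalence via depth and finite-length submodules, whereas the paper phrases it directly via $\operatorname{Ass}_A$; these are the same argument.
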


\begin{proof}
	Note that $ N = \operatorname{Syz}^A_1(X_M) $ is a non-zero module. We have an exact sequence $ 0 \to N \to G \to X_M \to 0 $, where $ G $ is a free $A$-module. The corresponding long exact sequences of Tor-modules yield an exact sequence (for every $ n \geqslant 1 $):
	\begin{align}\label{3bb}
	0 \longrightarrow \operatorname{Tor}^A_1(X_M,A/{I^n}) \longrightarrow N/{I^nN} \longrightarrow G/{I^nG} \longrightarrow {X_M}/{I^nX_M} \longrightarrow 0.
	\end{align}
	
	(i) $ \Rightarrow $ (ii): Since $ \mathfrak{m} \notin T^\infty_1(I, M) $, by virtue of Lemma~\ref{oi}, $ \operatorname{Tor}^A_1(X_M, A/{I^n}) = 0 $ for all $ n \gg 0 $. Hence \eqref{3bb} yields that $ \operatorname{Ass}^\infty_I(N) \subseteq \operatorname{Ass}^\infty_I(G) = \operatorname{Ass}^\infty_I(A) $. Therefore, since $ \mathfrak{m} \notin \operatorname{Ass}^\infty_I(A) $ (due to \cite[Proposition~4.1]{Mcada06}), we obtain that $ \mathfrak{m} \notin \operatorname{Ass}^\infty_I(N) $.
	
	(ii) $ \Rightarrow $ (i): Since $ A $ has isolated singularity, as in the proof of Lemma~\ref{oi}, it follows that $ \operatorname{Ass}_A\left( \operatorname{Tor}^A_1(X_M, A/{I^n}) \right) \subseteq \{ \mathfrak{m} \} $ for every $ n \geqslant 1 $. Thus, since $ \mathfrak{m} \notin \operatorname{Ass}^\infty_I(N) $, in view of \eqref{3bb}, it can be observed that $ \operatorname{Ass}_A\left( \operatorname{Tor}^A_1(X_M, A/{I^n}) \right) = \phi $ for all $ n \gg 0 $. Equivalently, $ \operatorname{Tor}^A_1(X_M, A/{I^n}) = 0 $ for all $ n \gg 0 $. Hence the implication follows from Lemma~\ref{oi}.
	
	Since $ \operatorname{height}(I) \geqslant 2 $, we have $ \operatorname{grade}(G_{I^n}(A)_+, G_{I^n}(A)) \geqslant 2 $ for all $ n \gg 0 $; see Theorem~\ref{RR}. So the last assertion follows from Theorem~\ref{sir1}.
\end{proof}

\section{Asymptotic prime divisors over complete intersections}\label{sec5}

Let $ A $ be a local complete intersection, and $ M $ be a finitely generated $ A $-module. Suppose either $ I $ is a principal ideal or $ I $ has a principal reduction generated by an $ A $-regular element. In this section, we analyze the asymptotic stability of certain associated prime ideals of Tor-modules $ \operatorname{Tor}_i^A(M, A/I^n) $ if both $ i $ and $ n $ tend to $ \infty $.

\subsection{Module structure on Tor}\label{Module structure on Tor}

We first discuss the graded module structure on direct sum of Tor-modules which we are going to use in order to prove our main results on asymptotic prime divisors of Tor-modules.

Let $Q$ be a ring, and ${\bf f} = f_1, \ldots, f_c$ be a $Q$-regular sequence. Set $A := Q/({\bf f})$. Let $M$ and $N$ be finitely generated $A$-modules.

\s
Let $ \mathbb{F} : \quad \cdots \rightarrow F_n \rightarrow \cdots \rightarrow F_1 \rightarrow F_0 \rightarrow 0 $ be a free resolution of $M$ by finitely generated free $A$-modules. Let
\[
t'_j : \mathbb{F} \longrightarrow \mathbb{F}(-2), \quad 1 \leqslant j \leqslant c
\]
be the {\it Eisenbud operators} defined by ${\bf f} = f_1, \ldots, f_c$; see \cite[Section~1]{Eis80}. In view of \cite[Corollary~1.4]{Eis80}, the chain maps $t'_j$ are determined uniquely up to homotopy. In particular, they induce well-defined maps
\[
t_j : \operatorname{Tor}_i^A(M,N) \longrightarrow \operatorname{Tor}_{i-2}^A(M,N)
\]
(for all $i \in \mathbb{Z}$ and $j = 1,\ldots,c$) on the homology of $ \mathbb{F} \otimes_A N$. In \cite[Corollary~1.5]{Eis80}, it is shown that the chain maps $t'_j$ ($1 \leqslant j \leqslant c$) commute up to homotopy. Thus
\begin{equation*}
\operatorname{Tor}_{\star}^A(M,N) := \bigoplus_{i \in \mathbb{Z}} \operatorname{Tor}_{-i}^A(M,N)
\end{equation*}
turns into a $\mathbb{Z}$-graded $\mathscr{S} := A[t_1,\ldots,t_c]$-module, where $\mathscr{S}$ is the $\mathbb{N}$-graded polynomial ring over $A$ in the operators $t_j$ defined by ${\bf f}$ with $\deg(t_j) = 2$ for all $j = 1,\ldots,c$. Here note that for every $i \in \mathbb{Z}$, the $i$th component of $\operatorname{Tor}_{\star}^A(M,N)$ is $\operatorname{Tor}_{-i}^A(M,N)$. This structure depends only on ${\bf f}$, are natural in both module arguments and commute with the connecting maps induced by short exact sequences.

\subsection{Stability of primes in $ \operatorname{Ass}_A\left( \operatorname{Tor}_i^A(M,N) \right) $}

Here we study the asymptotic stability of certain associated prime ideals of Tor-modules $ \operatorname{Tor}_i^A(M, N) $, $(i \geqslant 0)$, where $ M $ and $ N $ are finitely generated modules over a local complete intersection ring $A$ (see Corollary~\ref{corollary: asymptotic Ass on Tor}).

We denote the collection of all minimal prime ideals in the support of $ M $ by $ \operatorname{Min}_A(M) $ (or simply by $ \operatorname{Min}(M) $). It is well-known that $ \operatorname{Min}(M) \subseteq \operatorname{Ass}_A(M) \subseteq \operatorname{Supp}(M) $. Recall that a local ring $(A,\mathfrak{m})$ is called a {\it complete intersection ring} if its $ \mathfrak{m} $-adic completion $\widehat{A} = Q/({\bf f})$, where $Q$ is a complete regular local ring and ${\bf f} = f_1,\ldots,f_c$ is a $Q$-regular sequence. To prove our results, we may assume that $A$ is complete because of the following well-known fact on associate primes:

\begin{lemma}\label{lemma: Ass: Completion}
	For an $A$-module $M$, we have
	\[
	\operatorname{Ass}_A(M) = \left\{ \mathfrak{q} \cap A : \mathfrak{q} \in \operatorname{Ass}_{\widehat{A}} \left( M \otimes_A \widehat{A} \right) \right\}.
	\]
\end{lemma}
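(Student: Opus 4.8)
The plan is to derive this from the standard behaviour of associated primes under faithfully flat base change, applied to the completion homomorphism $A \to \widehat{A}$; recall that $\widehat{A}$ is a faithfully flat Noetherian $A$-algebra since $A$ is Noetherian local.

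First I would invoke the classical base-change formula for associated primes (see, e.g., Matsumura, \emph{Commutative Ring Theory}, or Bourbaki): for a flat homomorphism $A \to B$ of Noetherian rings and any $A$-module $M$,
\[
\operatorname{Ass}_B(M \otimes_A B) \;=\; \bigcup_{\mathfrak{p} \in \operatorname{Ass}_A(M)} \operatorname{Ass}_B\big(B/\mathfrak{p}B\big).
\]
Taking $B = \widehat{A}$, the lemma follows once we establish two elementary facts about a prime $\mathfrak{p} \in \operatorname{Spec}(A)$ and its fibre $\widehat{A}/\mathfrak{p}\widehat{A} \cong \widehat{A/\mathfrak{p}}$: (a) every $\mathfrak{q} \in \operatorname{Ass}_{\widehat{A}}(\widehat{A}/\mathfrak{p}\widehat{A})$ contracts to $\mathfrak{p}$ in $A$; and (b) if $\mathfrak{p} \in \operatorname{Ass}_A(M)$, then $\operatorname{Ass}_{\widehat{A}}(\widehat{A}/\mathfrak{p}\widehat{A})$ is non-empty.

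For (a): the inclusion $\mathfrak{p}\widehat{A} \subseteq \mathfrak{q}$ gives $\mathfrak{p} \subseteq \mathfrak{q} \cap A$ immediately, and for the reverse inclusion I would use that $\widehat{A}/\mathfrak{p}\widehat{A}$ is a flat module over the domain $A/\mathfrak{p}$ (being a base change of the flat $A$-algebra $\widehat{A}$), so that multiplication by any nonzero element of $A/\mathfrak{p}$ is injective on it; writing $\mathfrak{q} = \operatorname{Ann}_{\widehat{A}}(\bar{x})$ for a suitable nonzero $\bar{x} \in \widehat{A}/\mathfrak{p}\widehat{A}$ then forces $\mathfrak{q} \cap (A/\mathfrak{p}) = (0)$, i.e. $\mathfrak{q} \cap A = \mathfrak{p}$. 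For (b): $\widehat{A/\mathfrak{p}}$ is non-zero by faithful flatness, and a non-zero module over a Noetherian ring has a non-empty set of associated primes. Putting (a) and (b) together, the right-hand side of the displayed equality in the lemma equals $\bigcup_{\mathfrak{p} \in \operatorname{Ass}_A(M)} \{\mathfrak{p}\} = \operatorname{Ass}_A(M)$.

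I do not anticipate a genuine obstacle: the only care needed is to quote the base-change formula with the correct hypotheses and to prove (a) cleanly via the flatness-over-a-domain argument rather than by invoking going-down. As an alternative one could bypass the general formula and argue by localization, using that completion commutes with $\operatorname{Hom}_A(A/\mathfrak{p}, -)$ (since $A/\mathfrak{p}$ is finitely presented and $\widehat{A}$ is flat) together with the local characterization $\mathfrak{p} \in \operatorname{Ass}_A(M) \iff \mathfrak{p}A_{\mathfrak{p}} \in \operatorname{Ass}_{A_{\mathfrak{p}}}(M_{\mathfrak{p}})$; but the route above is the most transparent.
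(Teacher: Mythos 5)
The paper states this lemma without proof, describing it only as a ``well-known fact on associate primes,'' so there is no in-paper argument to compare against. Your proof is correct and is the standard route. The flat base-change formula you invoke (Matsumura, \emph{Commutative Ring Theory}, Theorem~23.2(ii)) applies to an arbitrary $A$-module $M$ when both $A$ and $\widehat{A}$ are Noetherian and $\widehat{A}$ is flat over $A$, which is exactly the generality of the statement. Your step (a) is handled cleanly: since $\widehat{A}/\mathfrak{p}\widehat{A} \cong \widehat{A}\otimes_A (A/\mathfrak{p})$ is flat over the domain $A/\mathfrak{p}$, any nonzero $\bar{a}\in A/\mathfrak{p}$ acts injectively on it, so no such $\bar{a}$ can lie in the image of $\mathfrak{q}\cap A$, forcing $\mathfrak{q}\cap A=\mathfrak{p}$; this sidesteps going-down. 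Your step (b) is likewise correct: faithful flatness of $\widehat{A}$ over $A$ gives $\widehat{A}/\mathfrak{p}\widehat{A}\neq 0$, and a nonzero module over a Noetherian ring has a nonempty set of associated primes. Combining (a) and (b) with the base-change formula gives both inclusions of the claimed equality. The only cosmetic point is that ``$\mathfrak{q}\cap (A/\mathfrak{p})$'' should be read as the image of $\mathfrak{q}\cap A$ in $A/\mathfrak{p}$; the intended meaning is clear.
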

It is now enough to prove our result with the following hypothesis:

\begin{hypothesis}\label{hypothesis 1}
	Let $A = Q/(f_1,\ldots,f_c)$, where $Q$ is a regular local ring, and $ f_1,\ldots,f_c $ is a $Q$-regular sequence. Let $M$ and $N$ be finitely generated $A$-modules.
\end{hypothesis}

We show our result with the following more general hypothesis:

\begin{hypothesis}\label{hypothesis 2}
	Let $A = Q/\mathfrak{a}$, where $Q$ is a regular ring of finite Krull dimension, and $ \mathfrak{a} \subseteq Q $ is an ideal such that $ \mathfrak{a}_{\mathfrak{q}} \subseteq Q_{\mathfrak{q}} $ is generated by a $ Q_{\mathfrak{q}} $-regular sequence for every $ \mathfrak{q} \in \operatorname{Var}(\mathfrak{a}) $. Let $M$ and $N$ be finitely generated $A$-modules.
\end{hypothesis}

It should be noticed that a ring $ A $ satisfies Hypothesis~\ref{hypothesis 1} implies that $ A $ satisfies Hypothesis~\ref{hypothesis 2}. With the Hypothesis~\ref{hypothesis 2}, we have the following well-known bounds for complete intersection dimension and complexity:
\begin{align*}
\operatorname{CI-dim}_A(M) & = \max\{ \operatorname{CI-dim}_{A_{\mathfrak{m}}}(M_{\mathfrak{m}}) : \mathfrak{m} \in \operatorname{Max}(A) \} \quad \mbox{[by definition]} \\
& \leqslant \max\{ \dim(A_{\mathfrak{m}}) : \mathfrak{m} \in \operatorname{Max}(A) \} \quad \mbox{[see, e.g., \cite[4.1.5]{AB00}]} \\
& = \dim(A); \\
\operatorname{cx}_A(M)  & = \max\{ \operatorname{cx}_{A_{\mathfrak{m}}}( M_{\mathfrak{m}} ) :  \mathfrak{m} \in \operatorname{Max}(A) \} \quad \mbox{[by definition]} \\
& \leqslant \max\{ \operatorname{codim}(A_{\mathfrak{m}}) :  \mathfrak{m} \in \operatorname{Max}(A) \} \quad \mbox{[see, e.g., \cite[1.4]{AB00}]} \\
& \leqslant \dim(Q).
\end{align*}
Therefore, by \cite[Theorem~4.9]{AB00}, we have the following result:
\begin{theorem}\label{theorem: vanishing of Tor}
	With the {\rm Hypothesis~\ref{hypothesis 2}}, the following statements are equivalent:
	\begin{enumerate}[{\rm (1)}]
		\item $ \operatorname{Tor}_i^A(M,N) = 0 $ for $ \dim(Q) + 1 $ consecutive values of $ i > \dim(A) $;
		\item $ \operatorname{Tor}_i^A(M,N) = 0 $ for all $ i \gg 0 $;
		\item $ \operatorname{Tor}_i^A(M,N) = 0 $ for all $ i > \dim(A) $.
	\end{enumerate}	
\end{theorem}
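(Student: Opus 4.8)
The plan is to reduce to the local case and then quote the rigidity theorem of Avramov and Buchweitz. First I would note that vanishing of $\operatorname{Tor}^A_i(M,N)$ is a local condition: since $\operatorname{Tor}^A_i(M,N)_{\mathfrak{m}} \cong \operatorname{Tor}^{A_{\mathfrak{m}}}_i(M_{\mathfrak{m}},N_{\mathfrak{m}})$, the module $\operatorname{Tor}^A_i(M,N)$ vanishes if and only if $\operatorname{Tor}^{A_{\mathfrak{m}}}_i(M_{\mathfrak{m}},N_{\mathfrak{m}}) = 0$ for every $\mathfrak{m} \in \operatorname{Max}(A)$. Under Hypothesis~\ref{hypothesis 2} each $A_{\mathfrak{m}}$ is a local complete intersection, so $\operatorname{CI-dim}_{A_{\mathfrak{m}}}(M_{\mathfrak{m}}) < \infty$, and the displayed inequalities preceding the statement give $\operatorname{CI-dim}_{A_{\mathfrak{m}}}(M_{\mathfrak{m}}) \leqslant \dim(A)$ and $\operatorname{cx}_{A_{\mathfrak{m}}}(M_{\mathfrak{m}}) \leqslant \dim(Q)$. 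The force of using the \emph{global} bounds $\dim(A)$ and $\dim(Q)$ is that a single threshold works simultaneously for all maximal ideals.

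Next I would dispose of the trivial implications: $(3) \Rightarrow (2)$ is immediate, and $(2) \Rightarrow (1)$ holds because once $\operatorname{Tor}^A_i(M,N) = 0$ for all $i$ past some bound, one may take any block of $\dim(Q)+1$ consecutive indices in that range lying above $\dim(A)$. The whole content is therefore $(1) \Rightarrow (3)$. Fixing $\mathfrak{m}$ and writing $s := \operatorname{CI-dim}_{A_{\mathfrak{m}}}(M_{\mathfrak{m}})$ and $q := \operatorname{cx}_{A_{\mathfrak{m}}}(M_{\mathfrak{m}})$, hypothesis~(1) supplies $\dim(Q)+1 \geqslant q+1$ consecutive vanishing Tor modules in degrees $> \dim(A) \geqslant s$, which is exactly the input of \cite[Theorem~4.9]{AB00}; that theorem then yields $\operatorname{Tor}^{A_{\mathfrak{m}}}_i(M_{\mathfrak{m}},N_{\mathfrak{m}}) = 0$ for all $i > s$, hence for all $i > \dim(A)$. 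Since $\mathfrak{m}$ was arbitrary, $\operatorname{Tor}^A_i(M,N) = 0$ for all $i > \dim(A)$, which is $(3)$, and the cycle $(1)\Rightarrow(3)\Rightarrow(2)\Rightarrow(1)$ closes.

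I do not plan to reprove the key ingredient: the genuinely hard content sits entirely inside \cite[Theorem~4.9]{AB00}, where the graded module structure of $\operatorname{Tor}^{A_{\mathfrak{m}}}_\star(M_{\mathfrak{m}},N_{\mathfrak{m}})$ over the ring of cohomology operators (as recalled in Section~\ref{Module structure on Tor}) together with support variety theory is used to pass from ``$q+1$ consecutive vanishing'' to ``eventual vanishing'' (rigidity) and then from ``eventual vanishing'' to ``vanishing in every degree exceeding the complete intersection dimension'' (the Tor-gap phenomenon special to complete intersections). What remains on our side is only the localization reduction and the two numerical estimates on complete intersection dimension and complexity, both routine; the one point that needs genuine care is the uniformity of these bounds in $\mathfrak{m}$, so that a single choice of $\dim(Q)+1$ consecutive degrees above $\dim(A)$ suffices globally.
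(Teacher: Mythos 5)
Your proposal is correct and follows essentially the same route as the paper: both establish the uniform bounds $\operatorname{CI-dim}\leqslant\dim(A)$ and $\operatorname{cx}\leqslant\dim(Q)$ by localizing at maximal ideals and then invoke \cite[Theorem~4.9]{AB00} as the sole nontrivial ingredient. The only cosmetic difference is that the paper cites the non-local form of that theorem directly, while you localize Tor first and apply the local statement at each $\mathfrak{m}$ before reassembling; these are interchangeable given the standard isomorphism $\operatorname{Tor}^A_i(M,N)_{\mathfrak{m}}\cong\operatorname{Tor}^{A_{\mathfrak{m}}}_i(M_{\mathfrak{m}},N_{\mathfrak{m}})$.
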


Let us recall the following asymptotic behaviour of Tor-modules.

\begin{lemma}\cite[Theorem~3.1]{G} \label{lemma: Dao, *Artinian}
	With the {\rm Hypothesis~\ref{hypothesis 1}}, if $ \lambda_A\left( \operatorname{Tor}_i^A(M,N) \right) $ is finite for all $ i \gg 0 $ {\rm (}where $ \lambda_A(-) $ is the length function{\rm )}, then
	\[
	\bigoplus_{i \ll 0} \operatorname{Tor}_{-i}^A(M,N) \quad\mbox{is a *Artinian graded $A[t_1,\ldots,t_c]$-module},
	\]
	where $ \deg(t_j) = 2 $ for all $  j = 1,\ldots,c $.
\end{lemma}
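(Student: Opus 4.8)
Since the stated conclusion concerns only the completions (it is phrased via $\widehat A,\widehat M,\widehat N$), the first step is to pass to the $\mathfrak m$-adic completion and assume $A=Q/(f_1,\dots,f_c)$ with $Q$ a \emph{complete} regular local ring; this is harmless, as completion is flat, $\operatorname{Tor}_i^A(M,N)\otimes_A\widehat A\cong\operatorname{Tor}_i^{\widehat A}(\widehat M,\widehat N)$, and finiteness of length is preserved. The plan is then to induct on $c$. For $c=0$ the ring $A=Q$ is regular, so $\operatorname{Tor}_i^A(M,N)=0$ for $i>\dim Q$ and $\bigoplus_{i\ll 0}\operatorname{Tor}_{-i}^A(M,N)=0$, which is vacuously $*$Artinian.

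For the inductive step, put $R:=Q/(f_1,\dots,f_{c-1})$, so $A=R/(f_c)$ with $f_c$ an $R$-regular element; the Eisenbud operators $t_1,\dots,t_{c-1}$ act on $\operatorname{Tor}_{\star}^R(M,N)$ and $t_1,\dots,t_c$ act on $\operatorname{Tor}_{\star}^A(M,N)$. Note $f_c$ kills $F\otimes_R N$ for any $R$-free resolution $F$ of $M$ (as $f_cN=0$), so $\operatorname{Tor}_{\star}^R(M,N)$ is in fact a graded $A[t_1,\dots,t_{c-1}]$-module. The key input is Gulliksen's change-of-rings long exact sequence associated with $A=R/(f_c)$ (see \cite[\S1]{Eis80} and \cite{G}): there is an exact sequence, natural in $(M,N)$ and compatible with $t_1,\dots,t_{c-1}$,
\[
\cdots\to\operatorname{Tor}_n^R(M,N)\xrightarrow{\ \alpha_n\ }\operatorname{Tor}_n^A(M,N)\xrightarrow{\ t_c\ }\operatorname{Tor}_{n-2}^A(M,N)\xrightarrow{\ \beta_n\ }\operatorname{Tor}_{n-1}^R(M,N)\to\cdots
\]
From it one reads, for $n\gg 0$, that $\operatorname{Tor}_n^R(M,N)$ is an extension of two finite-length subquotients of $\operatorname{Tor}_{\star}^A(M,N)$, hence has finite length; so the inductive hypothesis applies to $(M,N)$ over $R$ and gives that $\bigoplus_{i\ll0}\operatorname{Tor}_{-i}^R(M,N)$ is $*$Artinian over $R[t_1,\dots,t_{c-1}]$, equivalently over $S':=A[t_1,\dots,t_{c-1}]$. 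Writing $T:=\bigoplus_{i\ll0}\operatorname{Tor}_{-i}^A(M,N)$, exactness at $\operatorname{Tor}_n^A(M,N)$ gives $\ker\!\bigl(t_c\colon\operatorname{Tor}_n^A(M,N)\to\operatorname{Tor}_{n-2}^A(M,N)\bigr)=\operatorname{im}(\alpha_n)$, so in all sufficiently low degrees $(0:_Tt_c)$ is a homomorphic image, as a graded $S'$-module, of $\bigoplus_{i\ll0}\operatorname{Tor}_{-i}^R(M,N)$; hence $(0:_Tt_c)$ is $*$Artinian over $S'$.

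To conclude I would invoke graded Matlis duality over the complete local ring $A$. Every graded component of $T$ has finite length and $T$ is bounded above, so the graded Matlis dual $T^{\vee}$, with $(T^{\vee})_n:=\operatorname{Hom}_A(T_{-n},E)$ and $E:=E_A(A/\mathfrak m)$, is a bounded-below graded $S:=A[t_1,\dots,t_c]$-module with finitely generated graded components, and the lattice anti-isomorphism between graded submodules of $T$ and of $T^{\vee}$ converts ``$*$Artinian'' into ``$*$Noetherian'', i.e.\ finitely generated. Under this duality $t_c$ corresponds to multiplication by $t_c$, so $T^{\vee}/t_cT^{\vee}\cong(0:_Tt_c)^{\vee}$ is finitely generated over $S'=S/t_cS$. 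A one-element graded Nakayama argument then finishes: lift finitely many homogeneous generators of $T^{\vee}/t_cT^{\vee}$ to $T^{\vee}$, let $V$ be the cokernel of the resulting $S$-map $\bigoplus S(-d_k)\to T^{\vee}$; then $V=t_cV$, and since $V$ is bounded below and $\deg t_c=2>0$, iterating forces $V=0$, so $T^{\vee}$ is finitely generated over $S$. Dualizing back, $T$ is $*$Artinian over $S=A[t_1,\dots,t_c]$, completing the induction.

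\textbf{Where the difficulty lies.} The technical heart is the set-up and verification of Gulliksen's exact sequence, and in particular checking that $\alpha_n,\beta_n$ are morphisms of graded $A[t_1,\dots,t_{c-1}]$-modules, so that ``$(0:_Tt_c)$ is a homomorphic image of $\bigoplus_{i\ll0}\operatorname{Tor}_{-i}^R(M,N)$'' is literally an assertion about graded $A[t_1,\dots,t_{c-1}]$-modules; the degree conventions for the operators must be tracked carefully, and one should check that discarding the finitely many ``boundary'' degrees does not affect $*$Artinian-ness. A secondary, routine point is the reduction to the complete case and the exact form of graded Matlis duality (together with the equivalence ``$*$Noetherian $=$ finitely generated'' for bounded-below, degreewise-finite graded modules) that it requires. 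The remaining ingredients—the base case, the finite-length estimate for $\operatorname{Tor}^R$, and the one-element graded Nakayama lemma—are elementary. This argument essentially reproduces Gulliksen's original proof of \cite[Theorem~3.1]{G}.
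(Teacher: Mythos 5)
The paper does not prove this lemma; it is cited directly to Gulliksen \cite[Theorem~3.1]{G}, with no argument supplied. Your write-up is essentially a correct reconstruction of Gulliksen's original proof---induction on $c$ via the change-of-rings long exact sequence for $A=R/(f_c)$, identification of the degree-two connecting map with the Eisenbud operator $t_c$, graded Matlis duality over the $*$complete $*$local ring $A[t_1,\ldots,t_c]$, and a graded Nakayama step---and the caveats you flag (compatibility of $\alpha_n,\beta_n$ with the action of $t_1,\dots,t_{c-1}$, the reduction to and descent from the complete case, and the exact form of graded Matlis duality) are precisely the points that need care but are standard; note also that in the paper's own application (Proposition~\ref{proposition: Tor: polynomials}) one reduces to complete $A$ in any event.
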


As a consequence of this lemma, we obtain the following result:

\begin{proposition}\label{proposition: Tor: polynomials}
	Let $ A $ be a local complete intersection ring. Let $ M $ and $ N $ be finitely generated $ A $-modules. If $ \lambda_A\left( \operatorname{Tor}_i^A(M,N) \right) $ is finite for all sufficiently large integer $ i $, then we have that
	\[
	\lambda_A\left( \operatorname{Tor}_{2i}^A(M,N) \right) \;\;\mbox{ and }\;\; \lambda_A\left( \operatorname{Tor}_{2i + 1}^A(M,N) \right)
	\]
	are given by polynomials in $ i $ over $ \mathbb{Q} $ for all sufficiently large integer $ i $.
\end{proposition}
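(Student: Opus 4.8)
The plan is to reduce to the complete case, apply Gulliksen's structure theorem, pass to a graded Matlis dual, and finish with the classical Hilbert--Serre theorem.

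First I would reduce to Hypothesis~\ref{hypothesis 1}. Since the $\mathfrak{m}$-adic completion $\widehat{A}$ is faithfully flat over $A$, one has $\operatorname{Tor}_i^A(M,N)\otimes_A\widehat{A}\cong\operatorname{Tor}_i^{\widehat{A}}(\widehat{M},\widehat{N})$ for all $i$, and completion preserves the length of a finite-length module; moreover $\widehat{A}=Q/(f_1,\dots,f_c)$ with $Q$ regular local and $f_1,\dots,f_c$ a $Q$-regular sequence. Hence it suffices to treat the situation of Hypothesis~\ref{hypothesis 1}. By assumption there is $i_0$ with $\lambda_A(\operatorname{Tor}_i^A(M,N))<\infty$ for all $i\geqslant i_0$, so Lemma~\ref{lemma: Dao, *Artinian} applies: the tail $L:=\bigoplus_{i\geqslant i_0}\operatorname{Tor}_i^A(M,N)$, regarded as a graded submodule of $\operatorname{Tor}_\star^A(M,N)$ and thus concentrated in degrees $\leqslant -i_0$, is a $*$Artinian graded module over $S:=A[t_1,\dots,t_c]$ with $\deg(t_j)=2$.

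Next I would dualize. The ring $S$ is Noetherian and $*$local, with $*$maximal ideal $\mathfrak{m}S+(t_1,\dots,t_c)$, so graded Matlis duality sends $*$Artinian modules to finitely generated ones: $D:=L^\vee$ is a finitely generated graded $S$-module, concentrated in degrees $\geqslant i_0$, and since Matlis duality is exact and length-preserving on finite-length $A$-modules one gets $\lambda_A(D_j)=\lambda_A(\operatorname{Tor}_j^A(M,N))$ for every $j\geqslant i_0$. Because each $D_j$ has finite length over $A$, it has support $\{\mathfrak{m}\}$, so $D\otimes_A A_\mathfrak{q}=0$ for every prime $\mathfrak{q}\neq\mathfrak{m}$ of $A$, whence $\operatorname{Supp}_S(D)\subseteq V(\mathfrak{m}S)$; as $D$ is finitely generated, $\mathfrak{m}^tD=0$ for some $t\geqslant 1$. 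Therefore $D$ is a finitely generated graded module over $\overline{S}:=(A/\mathfrak{m}^t)[t_1,\dots,t_c]$, a polynomial ring in the variables $t_1,\dots,t_c$ of degree $2$ over the Artinian local ring $A/\mathfrak{m}^t$.

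Finally I would invoke the graded Hilbert--Serre theorem: the Hilbert series $\sum_j\lambda_A(D_j)z^j$ is a rational function with denominator $(1-z^2)^c$, and by partial fractions there are polynomials $P_0,P_1\in\mathbb{Q}[x]$ with $\lambda_A(D_j)=P_{j\bmod 2}(j)$ for all $j\gg 0$. Combining this with the length identity above, $\lambda_A(\operatorname{Tor}_{2i}^A(M,N))=P_0(2i)$ and $\lambda_A(\operatorname{Tor}_{2i+1}^A(M,N))=P_1(2i+1)$ for all $i\gg 0$, each of which is a polynomial in $i$ over $\mathbb{Q}$ (the zero polynomial when the relevant Tor modules eventually vanish, consistently with Theorem~\ref{theorem: vanishing of Tor}). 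I expect the crux to be the middle step: setting up graded Matlis duality over the non-standard graded $*$local ring $S$ so that $*$Artinian passes to finitely generated with an exact identification of the graded lengths, and then verifying that the dual is annihilated by a power of $\mathfrak{m}$ so that the classical Hilbert theory over an Artinian base becomes available; after that, splitting into even and odd indices and reading off polynomiality is routine.
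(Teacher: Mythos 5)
Your proposal is correct and follows essentially the same route as the paper: reduce to the complete case, invoke Gulliksen's result (Lemma~\ref{lemma: Dao, *Artinian}) to get a $*$Artinian graded $A[t_1,\dots,t_c]$-module, pass to the graded Matlis dual to obtain a finitely generated module whose degree-wise lengths agree with the original, and finish with Hilbert--Serre applied to even and odd pieces. The one place where you add something the paper leaves implicit is the observation that the dual $D$ is annihilated by a power of $\mathfrak{m}$ (because all its graded pieces have finite length and it is finitely generated), so that one is genuinely looking at a finitely generated graded module over $(A/\mathfrak{m}^t)[t_1,\dots,t_c]$, i.e., over a polynomial ring with Artinian local degree-zero part, which is the setting the classical Hilbert--Serre theorem actually covers. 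The paper (in the proof of Lemma~\ref{lemma: *Artinian, Hilbert function}) simply truncates $L^\vee$ to $\bigoplus_{i\geqslant i_0}(L^\vee)_i$ and appeals to Hilbert--Serre directly, tacitly assuming this reduction; your version spells it out, which is a minor but genuine improvement in rigor rather than a different argument.
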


\begin{proof}
	Without loss of generality, we may assume that $A$ is complete. Then the proposition follows from Lemmas~\ref{lemma: Dao, *Artinian} and \ref{lemma: *Artinian, Hilbert function}.
\end{proof}

The following result is a consequence of the graded version of Matlis duality and the Hilbert-Serre Theorem. It might be known for the experts. But we give a proof here for the reader's convenience.

\begin{lemma}\label{lemma: *Artinian, Hilbert function}
	Let $ (A,\mathfrak{m}) $ be a complete local ring. Let $ L = \bigoplus_{i \in \mathbb{Z}} L_i $ be a *Artinian graded $A[t_1,\ldots,t_c]$-module, where $ \deg(t_j) = 2 $ for all $ 1 \leqslant j \leqslant c $, and $ \lambda_A(L_i) $ is finite for all $ i \ll 0 $. Then $ \lambda_A(L_{-2i}) $ and $ \lambda_A(L_{-2i - 1}) $ are given by polynomials in $ i $ over $ \mathbb{Q} $ for all sufficiently large $ i $.
\end{lemma}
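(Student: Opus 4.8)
The plan is to convert the *Artinian module $L$ into a \emph{finitely generated} graded module by graded Matlis duality, and then to run the Hilbert--Serre theorem on each parity class of degrees. Since $(A,\mathfrak{m})$ is complete, graded Matlis duality applies (see, e.g., \cite{BS60}): writing $E := E_A(A/\mathfrak{m})$ and $S := A[t_1,\ldots,t_c]$ with $\deg(t_j) = 2$, set $T := \bigoplus_{n\in\mathbb{Z}} \operatorname{Hom}_A(L_{-n}, E)$, with the $S$-module structure in which $t_j$ acts on the $n$-th component as the transpose of $t_j \colon L_{-n-2} \to L_{-n}$. We may assume $L$ is bounded above in its grading (this holds in the application, where $L = \bigoplus_{i\ll 0}\operatorname{Tor}^A_{-i}(M,N)$ vanishes in large degrees), so that $T$ is a bounded-below, hence $\mathbb{N}$-graded up to shift, \emph{finitely generated} graded $S$-module. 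Because Matlis duality is exact and length-preserving on finite-length modules, $\lambda_A(T_n) = \lambda_A(L_{-n})$, and in particular $\lambda_A(T_n) < \infty$ for all $n \gg 0$. It therefore suffices to prove: if $T$ is a finitely generated graded $S$-module with $\lambda_A(T_n)$ finite for all $n\gg 0$, then $\lambda_A(T_{2i})$ and $\lambda_A(T_{2i+1})$ agree with polynomials in $i$ over $\mathbb{Q}$ for $i\gg 0$.

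Next I would separate parities. As each $t_j$ has even degree, $T^{(0)} := \bigoplus_i T_{2i}$ and $T^{(1)} := \bigoplus_i T_{2i+1}$ are graded $S$-submodules with $T = T^{(0)}\oplus T^{(1)}$; since $S$ is Noetherian, each is again finitely generated over $S$. Fixing $i_0$ with $\lambda_A(T_n)<\infty$ for all $n\geqslant 2i_0$, I replace $T^{(0)}$ by the truncation $U^{(0)} := \bigoplus_{i\geqslant i_0} T_{2i}$ and $T^{(1)}$ by $U^{(1)} := \bigoplus_{i\geqslant i_0} T_{2i+1}$; these are graded submodules of Noetherian $S$-modules, hence finitely generated over $S$, and now \emph{every} graded component of $U^{(0)}$ and $U^{(1)}$ has finite $A$-length.

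The key observation is that for such a truncation $U$ the ideal $\mathfrak{a} := \operatorname{Ann}_A(U)$ is $\mathfrak{m}$-primary. Indeed, $U$ is generated over $S$ by finitely many homogeneous elements $g_1,\ldots,g_r$, and since $A$ is central in $S$ one has $\mathfrak{a} = \bigcap_{k=1}^{r}\operatorname{Ann}_A(g_k)$; each $g_k$ lies in a graded component of finite length, so $\mathfrak{m}^{N_k}g_k = 0$ for some $N_k$, whence $\mathfrak{m}^N \subseteq \mathfrak{a}$ with $N = \max_k N_k$. Hence $U$ is a finitely generated graded module over $\bar S := (A/\mathfrak{a})[t_1,\ldots,t_c]$, a Noetherian graded ring whose degree-zero part $A/\mathfrak{a}$ is Artinian local, and $\lambda_A(U_n) = \lambda_{A/\mathfrak{a}}(U_n)$. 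Re-indexing so that $\deg(t_j) = 1$ makes $U$ a finitely generated graded module over a standard graded ring over an Artinian local ring, so the classical Hilbert--Serre theorem shows $n\mapsto \lambda_{A/\mathfrak{a}}(U_n)$ is eventually polynomial. Applying this to $U^{(0)}$ and $U^{(1)}$ produces $P_0, P_1 \in \mathbb{Q}[x]$ with $\lambda_A(T_{2i}) = P_0(i)$ and $\lambda_A(T_{2i+1}) = P_1(i)$ for $i\gg 0$, and therefore $\lambda_A(L_{-2i}) = P_0(i)$ and $\lambda_A(L_{-2i-1}) = P_1(i)$ for $i\gg 0$, which is the claim.

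The main obstacle I anticipate is not a deep one but a matter of care: getting the graded Matlis duality right in the $\mathbb{N}$-graded (rather than $\mathbb{Z}$-graded) setting — in particular ensuring the dual really is \emph{finitely generated}, which uses completeness of $A$ together with boundedness-above of $L$'s grading — and keeping the degree shift and the $\deg(t_j)=2$ convention consistent throughout the parity splitting. Once that reduction is set up, the argument is routine: truncate, observe the annihilator is $\mathfrak{m}$-primary, and quote Hilbert--Serre. One could alternatively bypass the parity splitting by computing the Hilbert series $\sum_n \lambda_{A/\mathfrak{a}}(U_n)s^n = Q(s)/(1-s^2)^c$ and doing partial fractions at $s = \pm 1$, but splitting into even and odd parts is cleaner and avoids any extra computation.
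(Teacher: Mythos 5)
Your route coincides with the paper's: dualize by graded Matlis duality over the Noetherian *complete *local ring $A[t_1,\ldots,t_c]$ to obtain a finitely generated graded module, then run Hilbert--Serre on a truncation. The extra detail you supply on the Hilbert--Serre side --- splitting by parity, truncating, and checking that the annihilator of the truncation is $\mathfrak{m}$-primary so the base is effectively Artinian after reindexing degrees --- is a correct unpacking of the step the paper compresses into a single citation of ``the Hilbert--Serre Theorem,'' and it is worth having spelled out. The one inaccuracy is your justification that $T$ is finitely generated \emph{because} it is bounded below: a bounded-below graded $S$-module with finite-length components need not be finitely generated (e.g., $\bigoplus_{n\geqslant 0}A/\mathfrak{m}$ with every $t_j$ acting as zero). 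What actually yields finite generation of the dual is the hypothesis that $L$ is *Artinian, combined with graded Matlis duality for Noetherian *complete *local rings (\cite[3.6.17]{BH98}), which the paper invokes directly; boundedness above of $L$ is itself a consequence of being *Artinian (consider the descending chain $L_{\geqslant n}$), so there is no need to import it from the intended application, and in any case it is not the operative reason for finite generation. With that correction your argument matches the paper's.
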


\begin{proof}
	We use the graded Matlis duality. Let us recall the following definitions: *complete from \cite[p.\,142]{BH98}; *local from \cite[p.\,139]{BH98}; and $ \mbox{*}\operatorname{Hom}(-,-) $ from \cite[p.\,33]{BH98}. Note that $A[t_1,\ldots,t_c]$ is a Noetherian *complete *local ring. We set $ E := E_A(A/\mathfrak{m}) $, the injective hull of $ A/\mathfrak{m} $. Also set $ L^{\vee} := \mbox{*}\operatorname{Hom}(L,E) $. Notice that $ (L^{\vee})_i = \operatorname{Hom}_A(L_{-i},E) $ for all $ i \in \mathbb{Z} $.
	
	Since $A[t_1,\ldots,t_c]$ is a Noetherian *complete *local ring, by virtue of Matlis duality for graded modules (\cite[3.6.17]{BH98}), we obtain that $ L^{\vee} $ is a finitely generated graded $A[t_1,\ldots,t_c]$-module. Let $ i_0 $ be such that $ \lambda_A(L_{-i}) $ is finite for all $ i \geqslant i_0 $. Hence
	\begin{equation}\label{lemma: *Artinian, Hilbert function: equation 1}
	\lambda_A\left( (L^{\vee})_i \right) = \lambda_A\left( \operatorname{Hom}_A(L_{-i},E) \right) = \lambda_A( L_{-i} )
	\end{equation}
	is finite for all $ i \geqslant i_0 $; see, e.g., \cite[3.2.12]{BH98}. Since $ \bigoplus_{i \geqslant i_0} (L^{\vee})_i $ is a graded $A[t_1,\ldots,t_c]$-submodule of $ L^{\vee} $, we have that $ \bigoplus_{i \geqslant i_0} (L^{\vee})_i $ is a finitely generated graded module over $A[t_1,\ldots,t_c]$. Therefore, by the Hilbert-Serre Theorem, we obtain that $ \lambda_A\left( (L^{\vee})_{2i} \right) $ and $ \lambda_A\left( (L^{\vee})_{2i+1} \right) $ are given by polynomials in $ i $ over $ \mathbb{Q} $ for all $ i \gg 0 $, and hence the lemma follows from \eqref{lemma: *Artinian, Hilbert function: equation 1}.
\end{proof}

We are now in a position to prove our main result of this section.
\begin{theorem}\label{theorem: asymptotic Ass on Tor}
	With the {\rm Hypothesis~\ref{hypothesis 2}}, exactly one of the following alternatives must hold:
	\begin{enumerate}[{\rm (1)}]
		\item $ \operatorname{Tor}_i^A(M, N) = 0 $ for all $ i > \dim(A) $;
		\item There exists a non-empty finite subset $ \mathcal{A} $ of $ \operatorname{Spec}(A) $ such that for every $ \mathfrak{p} \in \mathcal{A} $, at least one of the following statements holds true:
		\begin{enumerate}[{\rm (i)}]
			\item $ \mathfrak{p} \in \operatorname{Min}\left( \operatorname{Tor}_{2i}^A(M, N) \right) $ for all $ i \gg 0 $;
			\item $ \mathfrak{p} \in \operatorname{Min}\left( \operatorname{Tor}_{2i+1}^A(M, N) \right) $ for all $ i \gg 0 $.
		\end{enumerate}
	\end{enumerate}
\end{theorem}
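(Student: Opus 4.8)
The plan is to show that failure of alternative (1) forces alternative (2); the two are evidently mutually exclusive, since if $\operatorname{Tor}_i^A(M,N)=0$ for all $i>\dim(A)$ then $\operatorname{Min}\bigl(\operatorname{Tor}_{2i}^A(M,N)\bigr)=\operatorname{Min}\bigl(\operatorname{Tor}_{2i+1}^A(M,N)\bigr)=\emptyset$ for $i\gg0$, so no prime can satisfy (2)(i) or (2)(ii). The elementary local observation I would use throughout is that for $\mathfrak{p}\in\operatorname{Spec}(A)$ the ring $A_{\mathfrak{p}}$ is again a local complete intersection (by Hypothesis~\ref{hypothesis 2}, and $A_{\mathfrak{p}}$ again satisfies Hypothesis~\ref{hypothesis 2}), that $\operatorname{Tor}_i^A(M,N)_{\mathfrak{p}}\cong\operatorname{Tor}_i^{A_{\mathfrak{p}}}(M_{\mathfrak{p}},N_{\mathfrak{p}})$, and that $\mathfrak{p}\in\operatorname{Min}\bigl(\operatorname{Tor}_i^A(M,N)\bigr)$ precisely when $\operatorname{Tor}_i^{A_{\mathfrak{p}}}(M_{\mathfrak{p}},N_{\mathfrak{p}})$ is nonzero of finite length over $A_{\mathfrak{p}}$. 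Assume now that (1) fails; by Theorem~\ref{theorem: vanishing of Tor} this means $\operatorname{Tor}_i^A(M,N)$ is not eventually zero.

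I would then run an induction over the height stratification of $\operatorname{Spec}(A)$, which is bounded since $\dim(A)\leqslant\dim(Q)<\infty$. Precisely, for $k=0,1,\dots,\dim(A)$ I claim: \emph{either} we have already produced a prime $\mathfrak{p}$ with $\mathfrak{p}\in\operatorname{Min}\bigl(\operatorname{Tor}_{2i}^A(M,N)\bigr)$ for all $i\gg0$, or with $\mathfrak{p}\in\operatorname{Min}\bigl(\operatorname{Tor}_{2i+1}^A(M,N)\bigr)$ for all $i\gg0$; \emph{or} $\operatorname{Tor}_i^{A_{\mathfrak{q}}}(M_{\mathfrak{q}},N_{\mathfrak{q}})=0$ for every $\mathfrak{q}$ of height $\leqslant k$ and every $i\geqslant k+1$. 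For the step from $k$ to $k+1$, take a prime $\mathfrak{p}$ of height $k+1$; every prime strictly below it has height $\leqslant k$, so the stage-$k$ hypothesis forces $\operatorname{Supp}_{A_{\mathfrak{p}}}\bigl(\operatorname{Tor}_i^{A_{\mathfrak{p}}}(M_{\mathfrak{p}},N_{\mathfrak{p}})\bigr)\subseteq\{\mathfrak{p}A_{\mathfrak{p}}\}$ for all $i\geqslant k+1$, i.e. $\operatorname{Tor}_i^{A_{\mathfrak{p}}}(M_{\mathfrak{p}},N_{\mathfrak{p}})$ has finite length over $A_{\mathfrak{p}}$ for all $i\geqslant k+1$. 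Then Proposition~\ref{proposition: Tor: polynomials} applies over $A_{\mathfrak{p}}$, so $i\mapsto\lambda_{A_{\mathfrak{p}}}\bigl(\operatorname{Tor}_{2i}^{A_{\mathfrak{p}}}(M_{\mathfrak{p}},N_{\mathfrak{p}})\bigr)$ and $i\mapsto\lambda_{A_{\mathfrak{p}}}\bigl(\operatorname{Tor}_{2i+1}^{A_{\mathfrak{p}}}(M_{\mathfrak{p}},N_{\mathfrak{p}})\bigr)$ agree with polynomials over $\mathbb{Q}$ for $i\gg0$. If, for some height-$(k+1)$ prime $\mathfrak{p}$, one of these polynomials is not identically zero, it is eventually positive, which gives $\mathfrak{p}\in\operatorname{Min}\bigl(\operatorname{Tor}_{2i}^A(M,N)\bigr)$ for all $i\gg0$ (resp. $\operatorname{Tor}_{2i+1}$), and we stop with $\mathcal{A}=\{\mathfrak{p}\}$. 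Otherwise both polynomials vanish for \emph{every} height-$(k+1)$ prime $\mathfrak{p}$, so $\operatorname{Tor}_i^{A_{\mathfrak{p}}}(M_{\mathfrak{p}},N_{\mathfrak{p}})=0$ for $i\gg0$, hence, by Theorem~\ref{theorem: vanishing of Tor} over $A_{\mathfrak{p}}$ with $\dim(A_{\mathfrak{p}})=k+1$, for all $i\geqslant k+2$; together with the stage-$k$ conclusion this is exactly the stage-$(k+1)$ conclusion. The base case $k=0$ is the same argument applied to the minimal primes of $A$, where $\dim(A_{\mathfrak{p}})=0$ and finite length is automatic. If the process never stops, the stage-$\dim(A)$ conclusion reads $\operatorname{Tor}_i^A(M,N)=0$ for all $i>\dim(A)$, i.e. alternative (1), contradicting our standing assumption; so the process stops and $\mathcal{A}=\{\mathfrak{p}\}$ is the required set. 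The upgrade from $\operatorname{Min}$ to $\operatorname{Ass}$ in Corollary~\ref{corollary: asymptotic Ass on Tor} is then immediate from $\operatorname{Min}\subseteq\operatorname{Ass}$.

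The crux, and the only place where genuine content beyond bookkeeping enters, is the passage from ``$\operatorname{Tor}_i^A(M,N)\neq0$ infinitely often'' to ``a single prime lies in $\operatorname{Min}(\operatorname{Tor}_i^A(M,N))$ for \emph{all} large even $i$ (or for all large odd $i$)'': a priori a prime could sit in $\operatorname{Min}$ of infinitely many Tor modules without sitting in $\operatorname{Min}$ of cofinitely many even ones or cofinitely many odd ones. This is exactly what Proposition~\ref{proposition: Tor: polynomials} --- built on Gulliksen's $\ast$-Artinian theorem (Lemma~\ref{lemma: Dao, *Artinian}), graded Matlis duality and the Hilbert--Serre theorem --- rules out, since a polynomial over $\mathbb{Q}$ taking nonnegative integer values and not identically zero is eventually positive. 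The remaining work is the height-stratified induction, whose sole purpose is to guarantee that once $i\geqslant k+1$ the support of $\operatorname{Tor}_i^A(M,N)$ lies in the locus of primes of height $\geqslant k+1$ --- so that the finite-length hypothesis of Proposition~\ref{proposition: Tor: polynomials} is met at height-$(k+1)$ primes --- and which terminates because $\dim(A)<\infty$; this part is routine once the local reformulation of membership in $\operatorname{Min}$ is in place.
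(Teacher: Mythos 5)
Your proof is correct, and it reaches the same two pillars the paper leans on --- Theorem~\ref{theorem: vanishing of Tor} and Proposition~\ref{proposition: Tor: polynomials} together with the local reformulation of membership in $\operatorname{Min}$ --- but it organizes the search for the prime $\mathfrak{p}$ differently. The paper sets
\[
\mathcal{B} := \bigcup\left\{ \operatorname{Supp}\left( \operatorname{Tor}_i^A(M,N) \right) : \dim(A) < i \leqslant \dim(A) + \dim(Q) + 1 \right\},
\]
observes $\mathcal{B}=\emptyset$ is exactly alternative (1) via the vanishing theorem, and otherwise takes $\mathcal{A}$ to be the \emph{entire} finite set of minimal primes of the closed set $\mathcal{B}$; the minimality of $\mathfrak{p}\in\mathcal{A}$ is what yields $\operatorname{Tor}_i^{A_{\mathfrak{q}}}(M_{\mathfrak{q}},N_{\mathfrak{q}})=0$ for $\mathfrak{q}\subsetneq\mathfrak{p}$ and $i>\dim(A)$, hence finite length at $\mathfrak{p}$. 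Your height-stratified induction arrives at the same finite-length situation but by propagating the vanishing condition one height level at a time, and outputs a singleton $\mathcal{A}=\{\mathfrak{p}\}$ rather than the full list of minimal supports. Both are valid: the paper's route is shorter and gives an explicit description of a canonical $\mathcal{A}$ (all minimal primes of supports of Tor in the critical window $\dim(A)<i\leqslant\dim(A)+\dim(Q)+1$, which is precisely the range required to invoke the vanishing theorem), while your induction makes the bookkeeping behind the finite-length reduction more transparent at the cost of being lengthier and producing only one witnessing prime. A minor stylistic point: your argument implicitly uses that localization preserves Hypothesis~\ref{hypothesis 2} and that $\dim(A_{\mathfrak{p}})=\operatorname{height}(\mathfrak{p})$; both are true and worth stating explicitly, as the paper does implicitly through its definitions of $\mathcal{B}$ and $\mathcal{A}$.
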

\begin{proof}
	We set
	\[
	\mathcal{B} := \bigcup\left\{ \operatorname{Supp}\left( \operatorname{Tor}_i^A(M,N) \right) : \dim(A) < i \leqslant \dim(A) + \dim(Q) + 1 \right\}.
	\]
	If $ \mathcal{B} = \phi $ (empty set), then $ \operatorname{Tor}_i^A(M,N) = 0 $ for all $ \dim(A) < i \leqslant \dim(A) + \dim(Q) + 1 $, and hence, by virtue of Theorem~\ref{theorem: vanishing of Tor}, we get that $ \operatorname{Tor}_i^A(M,N) = 0 $ for all $ i > \dim(A) $. So we may assume that $ \mathcal{B} \neq \phi $. In this case, we prove that the statement (2) holds true. We denote the collection of minimal primes in $ \mathcal{B} $ by $ \mathcal{A} $, i.e.,
	\begin{equation}\label{theorem: asymptotic Ass on Tor: equation 1}
	\mathcal{A} := \left\{ \mathfrak{p} \in \mathcal{B} : \mathfrak{q} \in \operatorname{Spec}(A) \mbox{ and } \mathfrak{q} \subsetneq \mathfrak{p} \Rightarrow \mathfrak{q} \notin \mathcal{B} \right\}.
	\end{equation}
	Clearly, $ \mathcal{A} $ is a non-empty finite subset of $ \operatorname{Spec}(A) $. We claim that $ \mathcal{A} $ satisfies statement (2) in the theorem. To prove this claim, let us fix an arbitrary $ \mathfrak{p} \in \mathcal{A} $.
	
	If $ \mathfrak{q} \in \operatorname{Spec}(A) $ be such that $ \mathfrak{q} \subsetneq \mathfrak{p} $, then $ \mathfrak{q} \notin \mathcal{B} $, i.e., $ \operatorname{Tor}_i^{A_{\mathfrak{q}}}(M_{\mathfrak{q}}, N_{\mathfrak{q}}) = 0 $ for all $ \dim(A) < i \leqslant \dim(A) + \dim(Q) + 1 $, and hence, in view of Theorem~\ref{theorem: vanishing of Tor}, we obtain that $ \operatorname{Tor}_i^{A_{\mathfrak{q}}}(M_{\mathfrak{q}}, N_{\mathfrak{q}}) = 0 $ for all $ i > \dim(A) $. Therefore
	\begin{equation}\label{theorem: asymptotic Ass on Tor: equation 2}
	\operatorname{Supp}\left( \operatorname{Tor}_i^{A_{\mathfrak{p}}}(M_{\mathfrak{p}}, N_{\mathfrak{p}}) \right) \subseteq \left\{ \mathfrak{p} A_{\mathfrak{p}} \right\} \quad \mbox{for all } i > \dim(A),
	\end{equation}
	which gives
	\begin{equation}\label{theorem: asymptotic Ass on Tor: equation 3}
	\lambda_{A_{\mathfrak{p}}}\left( \operatorname{Tor}_i^{A_{\mathfrak{p}}}(M_{\mathfrak{p}}, N_{\mathfrak{p}}) \right) \quad \mbox{is finite for all } i > \dim(A).
	\end{equation}
	Since $ A_{\mathfrak{p}} $ satisfies Hypothesis~\ref{hypothesis 1}, by Proposition~\ref{proposition: Tor: polynomials}, there are polynomials $ P_1(z) $ and $ P_2(z) $ in $ z $ over $ \mathbb{Q} $ such that
	\begin{align}
	\lambda_{A_{\mathfrak{p}}}\left( \operatorname{Tor}_{2i}^{A_{\mathfrak{p}}}(M_{\mathfrak{p}}, N_{\mathfrak{p}}) \right) & = P_1(i) \quad\mbox{for all } i \gg 0;\label{theorem: asymptotic Ass on Tor: equation 4}\\
	\lambda_{A_{\mathfrak{p}}}\left( \operatorname{Tor}_{2i+1}^{A_{\mathfrak{p}}} (M_{\mathfrak{p}}, N_{\mathfrak{p}}) \right) & = P_2(i) \quad\mbox{for all } i \gg 0.\label{theorem: asymptotic Ass on Tor: equation 5}
	\end{align}
	
	We now show that both $ P_1(z) $ and $ P_2(z) $ cannot be zero polynomials. If this is not the case, then we have
	\[
	\operatorname{Tor}_i^{A_{\mathfrak{p}}}(M_{\mathfrak{p}}, N_{\mathfrak{p}}) = 0 \quad \mbox{for all } i \gg 0,
	\]
	which yields (by Theorem~\ref{theorem: vanishing of Tor}) that
	\[
	\operatorname{Tor}_i^{A_{\mathfrak{p}}}(M_{\mathfrak{p}}, N_{\mathfrak{p}}) = 0 \quad \mbox{for all } i > \dim(A),
	\]
	i.e., $ \mathfrak{p} \notin \mathcal{B} $, and hence $ \mathfrak{p} \notin \mathcal{A} $, which is a contradiction. Therefore at least one of $ P_1 $ and $ P_2 $ must be a non-zero polynomial.
	
	Assume that $ P_1 $ is a non-zero polynomial. Then $ P_1 $ may have only finitely many roots. Therefore $ P_1(i) \neq 0 $ for all $ i \gg 0 $, which yields $ \operatorname{Tor}_{2i}^{A_{\mathfrak{p}}}(M_{\mathfrak{p}}, N_{\mathfrak{p}}) \neq 0 $ for all $ i \gg 0 $. So, in view of \eqref{theorem: asymptotic Ass on Tor: equation 2}, we obtain that
	\[
	\operatorname{Supp}\left( \operatorname{Tor}_i^{A_{\mathfrak{p}}}(M_{\mathfrak{p}}, N_{\mathfrak{p}}) \right) = \left\{ \mathfrak{p} A_{\mathfrak{p}} \right\} \quad \mbox{for all } i \gg 0,
	\]
	which implies that $ \mathfrak{p} \in \operatorname{Min}\left( \operatorname{Tor}_{2i}^A(M, N) \right) $ for all $ i \gg 0 $.
	
	Similarly, if $ P_2 $ is a non-zero polynomial, then we have that
	\[
	\mathfrak{p} \in \operatorname{Min}\left( \operatorname{Tor}_{2i+1}^A(M, N) \right) \quad\mbox{for all } i \gg 0.
	\]
	This completes the proof of the theorem.
\end{proof}
As a corollary of this theorem, we obtain the following result on associate primes.

\begin{corollary}\label{corollary: asymptotic Ass on Tor}
	Let $ A $ be a local complete intersection ring. Let $ M $ and $ N $ be finitely generated $ A $-modules. Then  exactly one of the following alternatives must hold:
	\begin{enumerate}[{\rm (1)}]
		\item $ \operatorname{Tor}_i^A(M, N) = 0 $ for all $ i > \dim(A) $;
		\item There exists a non-empty finite subset $ \mathcal{A} $ of $ \operatorname{Spec}(A) $ such that for every $ \mathfrak{p} \in \mathcal{A} $, at least one of the following statements holds true:
		\begin{enumerate}[{\rm (i)}]
			\item $ \mathfrak{p} \in \operatorname{Ass}_A\left( \operatorname{Tor}_{2i}^A(M, N) \right) $ for all $ i \gg 0 $;
			\item $ \mathfrak{p} \in \operatorname{Ass}_A\left( \operatorname{Tor}_{2i+1}^A(M, N) \right) $ for all $ i \gg 0 $.
		\end{enumerate}
	\end{enumerate}
\end{corollary}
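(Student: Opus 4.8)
The plan is to deduce this directly from Theorem~\ref{theorem: asymptotic Ass on Tor} by faithfully-flat descent along the completion map $A \to \widehat{A}$, exchanging $\operatorname{Min}$ for $\operatorname{Ass}$ along the way. First I would reduce to the complete case. The ring $\widehat{A}$ is a complete local complete intersection, so it satisfies Hypothesis~\ref{hypothesis 1}, hence Hypothesis~\ref{hypothesis 2}. Since $\widehat{A}$ is faithfully flat over $A$, flat base change gives $\operatorname{Tor}_i^A(M,N) \otimes_A \widehat{A} \cong \operatorname{Tor}_i^{\widehat{A}}(\widehat{M},\widehat{N})$ for all $i$; in particular $\operatorname{Tor}_i^A(M,N) = 0$ if and only if $\operatorname{Tor}_i^{\widehat{A}}(\widehat{M},\widehat{N}) = 0$, and $\dim(\widehat{A}) = \dim(A)$, so alternative (1) for $A$ is equivalent to alternative (1) for $\widehat{A}$.

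Next I would apply Theorem~\ref{theorem: asymptotic Ass on Tor} to $\widehat{A}$, $\widehat{M}$, $\widehat{N}$. If the vanishing alternative holds there, we land in case (1). Otherwise we obtain a non-empty finite set $\widehat{\mathcal{A}} \subseteq \operatorname{Spec}(\widehat{A})$ with the $\operatorname{Min}$-property of the theorem. Because $\operatorname{Min}(E) \subseteq \operatorname{Ass}_{\widehat{A}}(E)$ for every finitely generated $\widehat{A}$-module $E$, each $\mathfrak{q} \in \widehat{\mathcal{A}}$ lies in $\operatorname{Ass}_{\widehat{A}}(\operatorname{Tor}_{2i}^{\widehat{A}}(\widehat{M},\widehat{N}))$ for all large $i$ or in $\operatorname{Ass}_{\widehat{A}}(\operatorname{Tor}_{2i+1}^{\widehat{A}}(\widehat{M},\widehat{N}))$ for all large $i$; since $\widehat{\mathcal{A}}$ is finite, the thresholds amalgamate into one. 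Now set $\mathcal{A} := \{\mathfrak{q} \cap A : \mathfrak{q} \in \widehat{\mathcal{A}}\}$, a non-empty finite subset of $\operatorname{Spec}(A)$. By Lemma~\ref{lemma: Ass: Completion} applied to the $A$-module $\operatorname{Tor}_j^A(M,N)$, whose completion is $\operatorname{Tor}_j^{\widehat{A}}(\widehat{M},\widehat{N})$, membership $\mathfrak{q} \in \operatorname{Ass}_{\widehat{A}}(\operatorname{Tor}_j^{\widehat{A}}(\widehat{M},\widehat{N}))$ forces $\mathfrak{q} \cap A \in \operatorname{Ass}_A(\operatorname{Tor}_j^A(M,N))$. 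Hence every $\mathfrak{p} \in \mathcal{A}$ satisfies (i) or (ii), which is alternative (2).

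Finally I would verify exclusivity. If (1) holds, then $\operatorname{Tor}_{2i}^A(M,N) = \operatorname{Tor}_{2i+1}^A(M,N) = 0$ as soon as $2i > \dim(A)$, so both associated-prime sets are empty for $i \gg 0$ and no prime can satisfy (i) or (ii); since (2) requires $\mathcal{A}$ non-empty, the two alternatives cannot hold simultaneously. Conversely, (2) produces a prime at which infinitely many of the $\operatorname{Tor}_i^A(M,N)$ are non-zero, so (1) fails. Thus exactly one alternative holds.

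The only mildly delicate point is the completion bookkeeping: one must record that $\operatorname{Tor}$ commutes with the flat base change $A \to \widehat{A}$ at the module level, so that Lemma~\ref{lemma: Ass: Completion} applies degree-by-degree, and that the finitely many ``for all $i \gg 0$'' thresholds coming from $\widehat{\mathcal{A}}$ can be taken uniform. Neither is a genuine obstacle; all the real content is already in Theorem~\ref{theorem: asymptotic Ass on Tor}.
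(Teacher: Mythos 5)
Your proposal is correct and follows essentially the same route as the paper: pass to the completion, apply Theorem~\ref{theorem: asymptotic Ass on Tor} together with $\operatorname{Min} \subseteq \operatorname{Ass}$ over $\widehat{A}$, and then descend via Lemma~\ref{lemma: Ass: Completion}. The paper's proof is just a terser version of this; your elaboration of the bookkeeping (flat base change for $\operatorname{Tor}$, amalgamating the finitely many thresholds, and the exclusivity check) fills in details the paper leaves implicit.
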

\begin{proof}
	For every finitely generated $ A $-module $ D $, we have $ \operatorname{Min}_A(D) \subseteq \operatorname{Ass}_A(D) $. Therefore, if $ A $ is complete, then the corollary follows from Theorem~\ref{theorem: asymptotic Ass on Tor}. Now the general case can be deduced by using Lemma~\ref{lemma: Ass: Completion}.
\end{proof}

Here we give an example which shows that both statements (i) and (ii) in the assertion (2) of Corollary~\ref{corollary: asymptotic Ass on Tor} might not hold together.

\begin{example}\label{example: two sets of stable values: Ass: Tor}
	Let $ Q = k[[u,x]] $ be a ring of formal power series in variables $ u $ and $ x $ over a field $k$. We set $ A := Q/(ux) $ and $ M = N := Q/(u) $. Clearly, $ A $ is a local complete intersection ring, and $M$, $N$ are $A$-modules. Then, for every $ i \geqslant 1 $, we have that $ \operatorname{Tor}_{2i}^A(M, N) = 0 $ and $ \operatorname{Tor}_{2i-1}^A(M, N) \cong k $; see \cite[Example~4.3]{AB00}. So, for all $ i \geqslant 1 $, we obtain that
	\begin{align*}
	& \operatorname{Ass}_A\left( \operatorname{Tor}_{2i}^A(M, N) \right) = \phi \quad\mbox{and} \\
	& \operatorname{Ass}_A\left( \operatorname{Tor}_{2i-1}^A(M, N) \right) = \operatorname{Ass}_A(k) = \{ (u,x)/(ux) \}.
	\end{align*}
\end{example}

\subsection{Stability of primes in $ \operatorname{Ass}_A\left( \operatorname{Tor}_i^A(M,A/I^n) \right) $}

We now study the asymptotic stability of certain associated prime ideals of Tor-modules $ \operatorname{Tor}_i^A(M, A/I^n) $, $(i, n \geqslant 0)$, where $M$ is a finitely generated module over a local complete intersection ring $A$, and either $ I $ is a principal ideal or $ I $ has a principal reduction generated by an $ A $-regular element (see Corollary~\ref{corollary: asymptotic ass: Tor: for special ideals}). We start with the following lemma which we use in order to prove our result when $ I $ is a principal ideal.

\begin{lemma}\label{lemma: Tor: principal ideal}
	Let $ A $ be a ring, and $ M $ be an $ A $-module. Fix an element $ a \in A $. Then there exist an ideal $ J $ of $ A $ and a positive integer $ n_0 $ such that
	\[
	\operatorname{Tor}_{i + 2}^A\big(M, A/(a^n)\big) \cong \operatorname{Tor}_i^A(M, J) \quad \mbox{for all } i \geqslant 1 \mbox{ and } n \geqslant n_0.
	\]
\end{lemma}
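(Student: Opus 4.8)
The plan is to strip off two homological degrees from $\operatorname{Tor}^A_{i+2}(M,A/(a^n))$ by using the two short exact sequences that come from the multiplication map $a^n \colon A \to A$, and then to invoke Noetherianity to make the resulting ideal independent of $n$. First I would split $a^n \colon A \to A$ into the short exact sequences
\begin{equation*}
0 \longrightarrow a^nA \longrightarrow A \longrightarrow A/(a^n) \longrightarrow 0
\quad\text{and}\quad
0 \longrightarrow (0:_A a^n) \longrightarrow A \longrightarrow a^nA \longrightarrow 0,
\end{equation*}
where in the second sequence the surjection $A \to a^nA$ is multiplication by $a^n$ and its kernel is the annihilator ideal $(0:_A a^n)$.

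Next I would feed each sequence into the long exact sequence of $\operatorname{Tor}^A(M,-)$ and use that $\operatorname{Tor}^A_j(M,A)=0$ for $j\geqslant 1$. From the first sequence this yields $\operatorname{Tor}^A_{j}(M,A/(a^n)) \cong \operatorname{Tor}^A_{j-1}(M,a^nA)$ for all $j\geqslant 2$; in particular $\operatorname{Tor}^A_{i+2}(M,A/(a^n)) \cong \operatorname{Tor}^A_{i+1}(M,a^nA)$ for $i\geqslant 0$. From the second sequence I get $\operatorname{Tor}^A_{j}(M,a^nA) \cong \operatorname{Tor}^A_{j-1}(M,(0:_A a^n))$ for $j\geqslant 2$, i.e. $\operatorname{Tor}^A_{i+1}(M,a^nA) \cong \operatorname{Tor}^A_{i}(M,(0:_A a^n))$ for $i\geqslant 1$. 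Composing the two isomorphisms gives
\begin{equation*}
\operatorname{Tor}^A_{i+2}\big(M,A/(a^n)\big) \;\cong\; \operatorname{Tor}^A_{i}\big(M,(0:_A a^n)\big)
\quad\text{for all } i\geqslant 1 \text{ and all } n\geqslant 1.
\end{equation*}

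Finally I would observe that the annihilator ideals form an ascending chain $(0:_A a) \subseteq (0:_A a^2) \subseteq \cdots$, which stabilizes since $A$ is Noetherian; let $n_0$ be an index beyond which it is constant, and put $J := (0:_A a^{n_0})$. Then $(0:_A a^n)=J$ for every $n\geqslant n_0$, and substituting into the displayed isomorphism completes the proof. There is no serious obstacle here; the only point requiring care is the bookkeeping of indices in the long exact sequences — one must note that the second dimension shift is valid only for $i\geqslant 1$, because $\operatorname{Tor}^A_0(M,A)=M$ need not vanish, which is exactly why the statement is restricted to $i\geqslant 1$.
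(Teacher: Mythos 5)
Your proof is correct and is essentially identical to the paper's: same two short exact sequences arising from multiplication by $a^n$, same two dimension shifts in $\operatorname{Tor}$, same appeal to Noetherianity (implicit in the paper's standing convention) to stabilize the ascending chain $(0:_A a) \subseteq (0:_A a^2) \subseteq \cdots$, and the same choice $J := (0:_A a^{n_0})$. Your closing remark correctly pinpoints why the restriction to $i \geqslant 1$ is needed.
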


\begin{proof}
	For every integer $ n \geqslant 1 $, we consider the following short exact sequence:
	\[
	0 \to (0 :_A a^n) \to A \to (a^n) \to 0,
	\]
	which yields the following isomorphisms:
	\begin{equation}\label{lemma: Tor: principal ideal: equation 1}
	\operatorname{Tor}_{i + 1}^A\big(M, (a^n)\big) \cong \operatorname{Tor}_i^A\big(M, (0 :_A a^n)\big) \quad \mbox{for all } i \geqslant 1.
	\end{equation}
	For every $ n \geqslant 1 $, the short exact sequence $ 0 \to (a^n) \to A \to A/(a^n) \to 0 $ gives
	\begin{equation}\label{lemma: Tor: principal ideal: equation 2}
	\operatorname{Tor}_{i + 1}^A\big(M, A/(a^n)\big) \cong \operatorname{Tor}_i^A\big(M, (a^n)\big) \quad \mbox{for all } i \geqslant 1.
	\end{equation}
	Thus \eqref{lemma: Tor: principal ideal: equation 1} and \eqref{lemma: Tor: principal ideal: equation 2} together yield
	\begin{equation}\label{lemma: Tor: principal ideal: equation 3}
	\operatorname{Tor}_{i + 2}^A\big(M, A/(a^n)\big) \cong \operatorname{Tor}_i^A\big(M, (0 :_A a^n)\big) \quad \mbox{for all } i \geqslant 1.
	\end{equation}
	Since $ A $ is a Noetherian ring, the ascending chain of ideals
	\[
	\big(0 :_A a\big) \subseteq \big(0 :_A a^2\big) \subseteq \big(0 :_A a^3\big) \subseteq \cdots
	\]
	will stabilize somewhere, i.e., there exists a positive integer $ n_0 $ such that
	\begin{equation}\label{lemma: Tor: principal ideal: equation 4}
	\big(0 :_A a^n\big) = \big(0 :_A a^{n_0}\big) \quad \mbox{for all } n \geqslant n_0.
	\end{equation}
	Then the lemma follows from \eqref{lemma: Tor: principal ideal: equation 3} and \eqref{lemma: Tor: principal ideal: equation 4} by setting $ J := \left(0 :_A a^{n_0}\right) $.
\end{proof}

Here is another lemma which we use in order to prove our result when $ I $ has a principal reduction generated by an $ A $-regular element.

\begin{lemma}\label{lemma: Tor: I has a principal reduction ideal}
	Let $ A $ be a ring. Let $ I $ be an ideal of $ A $ having a principal reduction generated by an $ A $-regular element. Then there exist an ideal $ J $ of $ A $ and a positive integer $ n_0 $ such that
	\[
	\operatorname{Tor}_i^A\big(M, A/I^n\big) \cong \operatorname{Tor}_i^A(M, A/J) \quad \mbox{for all } i \geqslant 2 \mbox{ and } n \geqslant n_0.
	\]
\end{lemma}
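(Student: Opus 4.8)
The plan is to remove the $n$-dependence by exploiting the reduction. Since $I$ has a principal reduction $(x)$ generated by an $A$-regular element $x$, by the definition of a reduction there is an integer $r \geqslant 0$ with $I^{r+1} = x I^r$. An immediate induction on $k$ then gives $I^{r+k} = x^k I^r$ for every $k \geqslant 0$; equivalently, $I^n = x^{\,n-r} I^r$ for all $n \geqslant r$.

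The key observation is that for each $n \geqslant r$ the $A$-linear map ``multiplication by $x^{\,n-r}$'' restricts to an isomorphism $I^r \xrightarrow{\ \sim\ } I^n$: it is injective because $x$, and hence $x^{\,n-r}$, is $A$-regular, and it is surjective precisely because $I^n = x^{\,n-r} I^r$ by the previous step. Thus $I^n \cong I^r$ as $A$-modules for every $n \geqslant r$, and consequently $\operatorname{Tor}_j^A(M, I^n) \cong \operatorname{Tor}_j^A(M, I^r)$ for all $j$ and all $n \geqslant r$.

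To convert this into a statement about $A/I^n$, I would use, for an arbitrary ideal $\mathfrak{b}$ of $A$, the short exact sequence $0 \to \mathfrak{b} \to A \to A/\mathfrak{b} \to 0$: since $\operatorname{Tor}_j^A(M, A) = 0$ for all $j \geqslant 1$, the associated long exact sequence gives $\operatorname{Tor}_i^A(M, A/\mathfrak{b}) \cong \operatorname{Tor}_{i-1}^A(M, \mathfrak{b})$ for every $i \geqslant 2$. Applying this with $\mathfrak{b} = I^n$ and with $\mathfrak{b} = I^r$, and chaining with the module isomorphism above, yields for every $i \geqslant 2$ and $n \geqslant r$
\[
\operatorname{Tor}_i^A(M, A/I^n) \;\cong\; \operatorname{Tor}_{i-1}^A(M, I^n) \;\cong\; \operatorname{Tor}_{i-1}^A(M, I^r) \;\cong\; \operatorname{Tor}_i^A(M, A/I^r).
\]
Setting $n_0 := \max\{r,1\}$ and $J := I^{n_0}$ then finishes the proof.

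There is no serious obstacle here; the one point that needs a little care is the first step --- extracting $I^{r+1} = x I^r$ from the hypothesis and propagating it to $I^n = x^{\,n-r} I^r$ --- together with the use of $A$-regularity of $x$ to upgrade the surjection $I^r \twoheadrightarrow I^n$ (induced by $x^{\,n-r}$) to an isomorphism. Note that, unlike Lemma~\ref{lemma: Tor: principal ideal}, this argument uses neither Noetherianity of $A$ nor any finiteness assumption on $M$.
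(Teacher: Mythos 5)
Your proof is correct, and it takes a genuinely different (though closely related) route from the paper's. The paper builds, for each $n \geqslant n_0$, the short exact sequence $0 \to A/I^n \xrightarrow{\,y\cdot\,} A/I^{n+1} \to A/(y) \to 0$ (injectivity of $y\cdot$ follows from regularity of $y$ together with $yI^n = I^{n+1}$, and the cokernel is $A/(y)$ because $I^{n+1} \subseteq (y)$), and then observes that $\operatorname{Tor}_i^A(M, A/(y)) = 0$ for $i \geqslant 2$ since $A/(y)$ has projective dimension one. The long exact sequence then gives $\operatorname{Tor}_i^A(M, A/I^n) \cong \operatorname{Tor}_i^A(M, A/I^{n+1})$ directly for $i \geqslant 2$. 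You instead promote the reduction equality $I^n = x^{n-r}I^r$ to an $A$-module isomorphism $I^r \xrightarrow{\sim} I^n$ (using regularity for injectivity), and then transport this to the quotients via the dimension shift $\operatorname{Tor}_i^A(M, A/\mathfrak{b}) \cong \operatorname{Tor}_{i-1}^A(M, \mathfrak{b})$ for $i \geqslant 2$. Both arguments hinge on exactly the same two ingredients (the reduction equality and regularity of the reduction generator), but they package them differently: the paper compares consecutive quotient modules $A/I^n$ and $A/I^{n+1}$ through a single auxiliary extension, while you compare the ideals $I^n$ and $I^r$ by an explicit isomorphism and then shift degrees. Your version makes the ``jump'' from $n$ to $r$ in one step rather than telescoping, and makes visible that the isomorphism class of $I^n$ as an $A$-module is eventually constant --- a slightly stronger intermediate observation. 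Your remark that neither Noetherianity of $A$ nor finiteness of $M$ is needed is accurate, though this is equally true of the paper's own argument.
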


\begin{proof}
	Since $ I $ has a principal reduction generated by an $ A $-regular element, there exist an $ A $-regular element $ y $ and a positive integer $ n_0 $ such that
	\[
	I^{n+1} = y I^n ~\mbox{ for all } n \geqslant n_0.
	\]
	Then it can be shown that for every $ n \geqslant n_0 $, we obtain a short exact sequence:
	\begin{equation}\label{lemma: Tor: I has a principal reduction ideal: equation 1}
	0 \longrightarrow A/I^n \stackrel{y\cdot}{\longrightarrow} A/I^{n+1} \longrightarrow A/(y) \longrightarrow 0.
	\end{equation}
	Now note that $ \operatorname{Tor}_i^A\big(M, A/(y)\big) = 0 $ for all $ i \geqslant 2 $. Therefore the short exact sequence \eqref{lemma: Tor: I has a principal reduction ideal: equation 1} yields
	\[
	\operatorname{Tor}_i^A\big(M, A/I^n\big) \cong \operatorname{Tor}_i^A\big(M, A/I^{n+1}\big) \quad \mbox{for all } i \geqslant 2 \mbox{ and } n \geqslant n_0.
	\]
	Hence the lemma follows by setting $ J := I^{n_0} $.
\end{proof}

Now we can achieve one of the main goals of this article.

\begin{theorem}\label{theorem: asymptotic ass: Tor: for special ideals}
	Let $ A $ be as in {\rm Hypothesis~\ref{hypothesis 2}}. Let $ M $ be a finitely generated $ A $-module, and $ I $ be an ideal of $ A $. Suppose either $ I $ is principal or $ I $ has a principal reduction generated by an $ A $-regular element. Then there exist positive integers $ i_0 $ and $ n_0 $ such that exactly one of the following alternatives must hold:
	\begin{enumerate}[{\rm (1)}]
		\item $ \operatorname{Tor}_i^A(M, A/I^n) = 0 $ for all $ i \geqslant i_0 $ and $ n \geqslant n_0 $;
		\item There exists a non-empty finite subset $ \mathcal{A} $ of $ \operatorname{Spec}(A) $ such that for every $ \mathfrak{p} \in \mathcal{A} $, at least one of the following statements holds true:
		\begin{enumerate}[{\rm (i)}]
			\item $ \mathfrak{p} \in \operatorname{Min}\left( \operatorname{Tor}_{2i}^A(M, A/I^n) \right) $ for all $ i \geqslant i_0 $ and $ n \geqslant n_0 $;
			\item $ \mathfrak{p} \in \operatorname{Min}\left( \operatorname{Tor}_{2i+1}^A(M, A/I^n) \right) $ for all $ i \geqslant i_0 $ and $ n \geqslant n_0 $.
		\end{enumerate}
	\end{enumerate}
\end{theorem}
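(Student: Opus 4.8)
The plan is to reduce this statement about the infinite family $\{A/I^n\}_{n \gg 0}$ to the single‑module result already established in Theorem~\ref{theorem: asymptotic Ass on Tor}, using Lemmas~\ref{lemma: Tor: principal ideal} and \ref{lemma: Tor: I has a principal reduction ideal} to ``freeze'' the second argument of $\operatorname{Tor}$ --- up to a harmless shift in homological degree --- as soon as $n$ is large enough.

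First I would handle the case in which $I$ has a principal reduction generated by an $A$‑regular element. By Lemma~\ref{lemma: Tor: I has a principal reduction ideal} there are an ideal $J$ of $A$ and an integer $n_0$ with $\operatorname{Tor}_i^A(M, A/I^n) \cong \operatorname{Tor}_i^A(M, A/J)$ for all $i \geqslant 2$ and $n \geqslant n_0$. Since $A$ is as in Hypothesis~\ref{hypothesis 2} and $A/J$ is finitely generated, Theorem~\ref{theorem: asymptotic Ass on Tor} applies to the pair $(M, A/J)$. If its alternative (1) holds, then $\operatorname{Tor}_i^A(M, A/J) = 0$ for all $i > \dim(A)$, whence $\operatorname{Tor}_i^A(M, A/I^n) = 0$ for all $i \geqslant i_0 := \max\{2, \dim(A) + 1\}$ and $n \geqslant n_0$, which is alternative (1) here. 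Otherwise alternative (2) of Theorem~\ref{theorem: asymptotic Ass on Tor} produces a non‑empty finite $\mathcal{A} \subseteq \operatorname{Spec}(A)$ such that each $\mathfrak{p} \in \mathcal{A}$ lies in $\operatorname{Min}(\operatorname{Tor}_{2i}^A(M, A/J))$ for all $i \gg 0$ or in $\operatorname{Min}(\operatorname{Tor}_{2i+1}^A(M, A/J))$ for all $i \gg 0$; as $\mathcal{A}$ is finite I may fix one $i_0 \geqslant 2$ working for every $\mathfrak{p}$, and the isomorphism above transports each such conclusion verbatim to $A/I^n$ for every $n \geqslant n_0$, giving alternative (2).

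Next I would treat the principal case $I = (a)$. Here Lemma~\ref{lemma: Tor: principal ideal} gives an ideal $J$ and an $n_0$ with $\operatorname{Tor}_{i+2}^A(M, A/(a^n)) \cong \operatorname{Tor}_i^A(M, J)$ for all $i \geqslant 1$ and $n \geqslant n_0$, and I would run the same argument for the pair $(M, J)$. The only difference is that the comparison isomorphism now shifts homological degree by $2$; this shift preserves parity, so membership of $\mathfrak{p}$ in $\operatorname{Min}(\operatorname{Tor}_{2i}^A(M, J))$ for all $i \gg 0$ forces $\mathfrak{p} \in \operatorname{Min}(\operatorname{Tor}_{2i}^A(M, A/(a^n)))$ for all $i \gg 0$ and $n \geqslant n_0$ (and likewise in the odd case), while alternative (1) for $(M, J)$ now yields $\operatorname{Tor}_i^A(M, A/(a^n)) = 0$ for all $i > \dim(A) + 2$ and $n \geqslant n_0$. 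Absorbing the finitely many numerical thresholds into one pair $(i_0, n_0)$ completes this case.

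Finally, the two alternatives are mutually exclusive: if alternative (1) holds for the chosen $(i_0, n_0)$, then every module $\operatorname{Tor}_j^A(M, A/I^n)$ with $j \geqslant i_0$ and $n \geqslant n_0$ vanishes, so its set of minimal primes is empty and no non‑empty $\mathcal{A}$ as in (2) can exist; combined with the above this shows exactly one alternative must hold. I expect no serious obstacle: the substantive work is already done in Theorem~\ref{theorem: asymptotic Ass on Tor}, and the only delicate point is the bookkeeping of thresholds --- making sure a single $(i_0, n_0)$ serves all of the finitely many primes in $\mathcal{A}$ and both parities simultaneously --- which is routine since $\mathcal{A}$ is finite and the inputs are qualitative ``$i \gg 0$'' statements that we are free to truncate.
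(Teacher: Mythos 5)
Your proof is correct and follows the same route as the paper: in both cases, reduce via Lemma~\ref{lemma: Tor: principal ideal} (degree shift by $2$, preserving parity) or Lemma~\ref{lemma: Tor: I has a principal reduction ideal} (no shift) to a fixed finitely generated second argument, then invoke Theorem~\ref{theorem: asymptotic Ass on Tor}. The paper's own proof is a one-line citation of exactly these ingredients; your writeup just supplies the bookkeeping that the paper leaves implicit.
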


\begin{proof}
	If $ I $ is a principal ideal, then the result follows from Theorem~\ref{theorem: asymptotic Ass on Tor} and Lemma~\ref{lemma: Tor: principal ideal}. In another case, i.e., when $ I $ has a principal reduction generated by an $ A $-regular element, then we use Theorem~\ref{theorem: asymptotic Ass on Tor} and Lemma~\ref{lemma: Tor: I has a principal reduction ideal} to get the desired result of the theorem.
\end{proof}

As an immediate corollary of this theorem, we obtain the following:

\begin{corollary}\label{corollary: asymptotic ass: Tor: for special ideals}
	Let $ A $ be a local complete intersection ring. Let $ M $ be a finitely generated $ A $-module, and $ I $ be an ideal of $ A $. Suppose either $ I $ is principal or $ I $ has a principal reduction generated by an $ A $-regular element. Then there exist positive integers $ i_0 $ and $ n_0 $ such that exactly one of the following alternatives must hold:
	\begin{enumerate}[{\rm (1)}]
		\item $ \operatorname{Tor}_i^A(M, A/I^n) = 0 $ for all $ i \geqslant i_0 $ and $ n \geqslant n_0 $;
		\item There exists a non-empty finite subset $ \mathcal{A} $ of $ \operatorname{Spec}(A) $ such that for every $ \mathfrak{p} \in \mathcal{A} $, at least one of the following statements holds true:
		\begin{enumerate}[{\rm (i)}]
			\item $ \mathfrak{p} \in \operatorname{Ass}_A\left( \operatorname{Tor}_{2i}^A(M, A/I^n) \right) $ for all $ i \geqslant i_0 $ and $ n \geqslant n_0 $;
			\item $ \mathfrak{p} \in \operatorname{Ass}_A\left( \operatorname{Tor}_{2i+1}^A(M, A/I^n) \right) $ for all $ i \geqslant i_0 $ and $ n \geqslant n_0 $.
		\end{enumerate}
	\end{enumerate}
\end{corollary}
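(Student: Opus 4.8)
The plan is to deduce this from Theorem~\ref{theorem: asymptotic ass: Tor: for special ideals} by the standard passage to the completion, exactly as in the proof of Corollary~\ref{corollary: asymptotic Ass on Tor}. Since $A$ is a local complete intersection ring, its $\mathfrak{m}$-adic completion is $\widehat{A} = Q/(f_1,\dots,f_c)$ with $Q$ a complete regular local ring and $f_1,\dots,f_c$ a $Q$-regular sequence; thus $\widehat{A}$ is as in Hypothesis~\ref{hypothesis 1}, and hence also as in Hypothesis~\ref{hypothesis 2}. So Theorem~\ref{theorem: asymptotic ass: Tor: for special ideals} applies to $\widehat{A}$, to $\widehat{M} := M\otimes_A\widehat{A}$, and to the ideal $I\widehat{A}$, provided the hypothesis on $I$ is inherited.

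First I would check this inheritance: if $I=(a)$ is principal, then $I\widehat{A}=(a)$ is principal; and if $yI^n=I^{n+1}$ for all $n\gg0$ with $y$ an $A$-regular element, then $y\,(I^n\widehat{A})=I^{n+1}\widehat{A}$ for all $n\gg0$, and $y$ remains $\widehat{A}$-regular by faithful flatness of $\widehat{A}$ over $A$. Next, since completion is flat and $A/I^n\otimes_A\widehat{A}\cong\widehat{A}/I^n\widehat{A}$, there are natural isomorphisms $\operatorname{Tor}_i^A(M,A/I^n)\otimes_A\widehat{A}\cong\operatorname{Tor}_i^{\widehat{A}}\big(\widehat{M},\widehat{A}/I^n\widehat{A}\big)$ for all $i,n$. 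By faithful flatness $\operatorname{Tor}_i^A(M,A/I^n)=0$ if and only if $\operatorname{Tor}_i^{\widehat{A}}\big(\widehat{M},\widehat{A}/I^n\widehat{A}\big)=0$, so alternative (1) for $A$ is equivalent to alternative (1) for $\widehat{A}$, and since alternative (2) for $A$ forces some $\operatorname{Tor}$ to have nonempty support for all large $i$, the two alternatives for $A$ are mutually exclusive once we know this for $\widehat{A}$.

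It then remains to transfer conclusion (2). Apply Theorem~\ref{theorem: asymptotic ass: Tor: for special ideals} to $\widehat{A}$ to obtain integers $i_0,n_0$ and, in case (2), a non-empty finite subset $\mathcal{A}'\subseteq\operatorname{Spec}(\widehat{A})$ such that each $\mathfrak{q}\in\mathcal{A}'$ lies in $\operatorname{Min}\big(\operatorname{Tor}_{2i}^{\widehat{A}}(\widehat{M},\widehat{A}/I^n\widehat{A})\big)$ for all $i\geqslant i_0,\ n\geqslant n_0$, or in $\operatorname{Min}\big(\operatorname{Tor}_{2i+1}^{\widehat{A}}(\widehat{M},\widehat{A}/I^n\widehat{A})\big)$ for all $i\geqslant i_0,\ n\geqslant n_0$. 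Since $\operatorname{Min}_{\widehat{A}}(D)\subseteq\operatorname{Ass}_{\widehat{A}}(D)$ for every finitely generated $\widehat{A}$-module $D$, the same holds with $\operatorname{Min}$ replaced by $\operatorname{Ass}$. Now put $\mathcal{A}:=\{\mathfrak{q}\cap A:\mathfrak{q}\in\mathcal{A}'\}$, a non-empty finite subset of $\operatorname{Spec}(A)$. By Lemma~\ref{lemma: Ass: Completion} applied to $\operatorname{Tor}_i^A(M,A/I^n)$, together with the isomorphism above, $\mathfrak{q}\in\operatorname{Ass}_{\widehat{A}}\big(\operatorname{Tor}_{2i}^{\widehat{A}}(\widehat{M},\widehat{A}/I^n\widehat{A})\big)$ implies $\mathfrak{q}\cap A\in\operatorname{Ass}_A\big(\operatorname{Tor}_{2i}^A(M,A/I^n)\big)$, and similarly in the odd case; hence $\mathcal{A}$ has the required property with the same $i_0,n_0$.

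I do not anticipate a serious obstacle: the only points needing care are the bookkeeping in the ``exactly one of (1), (2)'' dichotomy — handled by faithful flatness of $\widehat{A}/A$ — and the verification that ``$I$ principal / has a principal reduction generated by a regular element'' is inherited by $\widehat{A}$. Everything else is the routine transfer between $A$ and $\widehat{A}$ via flatness and Lemma~\ref{lemma: Ass: Completion}, precisely as in the proof of Corollary~\ref{corollary: asymptotic Ass on Tor}.
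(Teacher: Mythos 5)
Your proof is correct and follows essentially the same route as the paper: apply Theorem~\ref{theorem: asymptotic ass: Tor: for special ideals} to $\widehat{A}$ (using $\operatorname{Min}\subseteq\operatorname{Ass}$), check that ``$I$ principal'' or ``$I$ has a principal reduction by a regular element'' passes to $\widehat{A}$, and transfer back via the flat base change isomorphism $\operatorname{Tor}_i^A(M,A/I^n)\otimes_A\widehat{A}\cong\operatorname{Tor}_i^{\widehat{A}}(\widehat{M},\widehat{A}/\widehat{I}^n)$ together with Lemma~\ref{lemma: Ass: Completion}. The paper states this more tersely, but the ingredients and their roles are identical; your extra care with the mutual-exclusivity of (1) and (2) via faithful flatness is a harmless elaboration of what the paper leaves implicit.
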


\begin{proof}
	Since $ \operatorname{Min}_A(D) \subseteq \operatorname{Ass}_A(D) $ for every finitely generated $ A $-module $ D $, the corollary follows from Theorem~\ref{theorem: asymptotic ass: Tor: for special ideals} when $ A $ is complete. 
	
	Now note that if $ I $ is principal, then so is its completion $ \widehat{I} $. Also note that if $ I $ has a principal reduction generated by an $ A $-regular element, then $ \widehat{I} $ has a principal reduction generated by an $ \widehat{A} $-regular element. It is well-known that
	\[
	\operatorname{Tor}_i^A(M, A/I^n) \otimes_A \widehat{A} ~\cong \operatorname{Tor}_i^{\widehat{A}} \left( \widehat{M}, \widehat{A}/{(\widehat{I})}^n \right) \quad \mbox{for all } i, n \geqslant 0.
	\]
	Therefore the general case can be easily deduced by using Lemma~\ref{lemma: Ass: Completion}.
\end{proof}

 \section*{Acknowledgements}

Ghosh was supported by DST, Govt.\,of India under the DST-INSPIRE Faculty Scheme. Mallick was supported by UGC, MHRD, Govt.\,of India.


\end{document}